\theoremstyle{plain}
\newtheorem{theorem}{Theorem}[section]
\newtheorem{lemma}[theorem]{Lemma}
\newtheorem{corollary}[theorem]{Corollary}
\newtheorem{proposition}[theorem]{Proposition}
\theoremstyle{definition}
\newtheorem{definition}[theorem]{Definition}
\newtheorem*{theorem:invariance}{Theorem \ref{thm:invariance}}
\newtheorem*{corollary:levy-distance}{Corollary \ref{cor:levy-distance}}
\newtheorem*{corollary:cdf-distance}{Corollary \ref{cor:cdf-distance}}
\newtheorem*{theorem:majority}{Theorem \ref{thm:majority}}
\newtheorem*{theorem:bourgain-slice}{Theorem \ref{thm:bourgain-slice}}
\newtheorem*{theorem:kindler-safra}{Theorem \ref{thm:kindler-safra}}
\newtheorem*{theorem:tEKR-Friedgut}{Theorem \ref{thm:tEKR-Friedgut}}
\newtheorem*{theorem:tEKR-stability}{Theorem \ref{thm:tEKR-stability}}
\newtheorem*{theorem:harmonic-projection}{Theorem \ref{thm:harmonic-projection}}
\providecommand{\RR}{\mathbb{R}}
\providecommand{\FF}{\mathbb{F}}
\providecommand{\ZZ}{\mathbb{Z}}
\providecommand{\cG}{\mathcal{G}}
\providecommand{\cA}{\mathcal{A}}
\providecommand{\cB}{\mathcal{B}}
\providecommand{\cF}{\mathcal{F}}
\providecommand{\cJ}{\mathcal{J}}
\providecommand{\cX}{\mathcal{X}}
\providecommand{\cY}{\mathcal{Y}}
\DeclareMathOperator{\baseInf}{Inf}
\providecommand{\Infc}{\baseInf^c}
\providecommand{\Infs}{\baseInf^s}
\DeclareMathOperator*{\EE}{\mathbb{E}}
\DeclareMathOperator*{\VV}{\mathbb{V}}
\DeclareMathOperator{\basestability}{\mathbb{S}}
\providecommand{\stabilityc}{\basestability^c}
\providecommand{\stabilitys}{\basestability^s}
\DeclareMathOperator{\Cov}{Cov}
\providecommand{\charf}[1]{\mathbf{1}_{#1}}
\DeclareMathOperator{\Sq}{Sq}
\DeclareMathOperator{\dist}{dist}
\DeclareMathOperator{\Nor}{N}
\DeclareMathOperator{\Po}{Po}
\DeclareMathOperator{\Ber}{Ber}
\DeclareMathOperator{\Bin}{Bin}
\DeclareMathOperator{\sgn}{sgn}
\title{Invariance principle on the slice}
\author[1]{Yuval Filmus}
\author[2]{Guy Kindler}
\author[3]{Elchanan Mossel}
\author[4]{Karl Wimmer}
\affil[1]{Technion --- Israel Institute of Technology \\
\texttt{yuvalfi@cs.technion.ac.il}}
\affil[2]{The Hebrew University of Jerusalem, Israel \\
\texttt{gkindler@cs.huji.ac.il}}
\affil[3]{The Wharton School, University of Pennsylvania \\ and University of California, Berkeley \\
\texttt{mossel@wharton.upenn.edu}}
\affil[4]{Duquesne University, Pittsburgh, PA \\
\texttt{wimmerk@duq.edu}}
\begin{document}

\maketitle

\begin{abstract}
 The non-linear invariance principle of Mossel, O'Donnell and Oleszkiewicz establishes that if $f(x_1,\ldots,x_n)$ is a multilinear low-degree polynomial with low influences then the distribution of $f(\cB_1,\ldots,\cB_n)$ is close (in various senses) to the distribution of $f(\cG_1,\ldots,\cG_n)$, where $\cB_i \in_R \{-1,1\}$ are independent Bernoulli random variables and $\cG_i \sim \Nor(0,1)$ are independent standard Gaussians. The invariance principle has seen many application in theoretical computer science, including the \emph{Majority is Stablest} conjecture, which shows that the Goemans--Williamson algorithm for MAX-CUT is optimal under the Unique Games Conjecture.

 More generally, MOO's invariance principle works for any two vectors of hypercontractive random variables $(\cX_1,\ldots,\cX_n),(\cY_1,\ldots,\cY_n)$ such that (i) \emph{Matching moments}: $\cX_i$ and $\cY_i$ have matching first and second moments, (ii) \emph{Independence}: the variables $\cX_1,\ldots,\cX_n$ are independent, as are $\cY_1,\ldots,\cY_n$.

 The independence condition is crucial to the proof of the theorem, yet in some cases we would like to use distributions $(\cX_1,\ldots,\cX_n)$ in which the individual coordinates are not independent. A common example is the uniform distribution on the \emph{slice} $\binom{[n]}{k}$ which consists of all vectors $(x_1,\ldots,x_n) \in \{0,1\}^n$ with Hamming weight~$k$. The slice shows up in theoretical computer science (hardness amplification, direct sum testing), extremal combinatorics (Erd\H{o}s--Ko--Rado theorems) and coding theory (in the guise of the Johnson association scheme).

 Our main result is an invariance principle in which $(\cX_1,\ldots,\cX_n)$ is the uniform distribution on a slice $\binom{[n]}{pn}$ and $(\cY_1,\ldots,\cY_n)$ consists either of $n$ independent $\Ber(p)$ random variables, or of $n$ independent $\Nor(p,p(1-p))$ random variables. As applications, we prove a version of \emph{Majority is Stablest} for functions on the slice, a version of Bourgain's tail theorem, a version of the Kindler--Safra structural theorem, and a stability version of the $t$-intersecting Erd\H{o}s--Ko--Rado theorem, combining techniques of Wilson and Friedgut.

 Our proof relies on a combination of ideas from analysis and probability, algebra and combinatorics. In particular, we make essential use of recent work of the first author which describes an explicit Fourier basis for the slice.
\end{abstract}

\thispagestyle{empty}
\pagebreak
\setcounter{page}{1}

\section{Introduction} \label{sec:intro}

Analysis of Boolean functions is an area at the intersection of theoretical computer science, functional analysis and probability theory, which traditionally studies Boolean functions on the Boolean cube $\{0,1\}^n$. A recent development in the area is the non-linear  \emph{invariance principle} of Mossel, O'Donnell and Oleszkiewicz~\cite{MOO}, a vast generalization of the fundamental Berry--Esseen theorem. The Berry--Esseen theorem is a quantitative version of the Central Limit Theorem, giving bounds on the speed of convergence of a sum $\sum_i X_i$ to the corresponding Gaussian distribution. Convergence occurs as long as none of the summands $X_i$ is too ``prominent''. The invariance principle is an analog of the Berry--Esseen theorem for low-degree polynomials. Given a low-degree polynomial $f$ on $n$ variables in which none of the variables is too prominent (technically, $f$ has low \emph{influences}), the invariance principle states that the distribution of $f(X_1,\ldots,X_n)$ and $f(Y_1,\ldots,Y_n)$ is similar as long as each of the vectors $(X_1,\ldots,X_n)$ and $(Y_1,\ldots,Y_n)$ consists of independent coordinates, the distributions of $X_i,Y_i$ have matching first and second moments, and the variables $X_i,Y_i$ are hypercontractive.

The invariance principle came up in the context of proving a conjecture, \emph{Majority is Stablest}, claiming that the majority function is the most noise stable among functions which have low influences. It is often applied in the following setting: the $X_i$ are skewed Bernoulli variables, and the $Y_i$ are the matching normal distributions. The invariance principle allows us to analyze a function on the Boolean cube (corresponding to the $X_i$) by analyzing its counterpart in Gaussian space (corresponding to the $Y_i$), in which setting it can be analyzed using geometric methods. This approach has been used to prove many results in analysis of Boolean functions (see for example~\cite{KhotSurvey}).

The proof of the invariance principle relies on the product structure of the underlying probability spaces. The challenge of proving an invariance principle for non-product spaces seems far from trivial.
Here we prove such an invariance principle for the
distribution over $X_1,\ldots,X_n$ which is uniform over the \emph{slice} $\binom{[n]}{k}$, defined as:
\[ \binom{[n]}{k} = \{ (x_1,\ldots,x_n) \in \{0,1\}^n : x_1+\cdots+x_n = k \}. \]
This setting arises naturally in hardness of approximation, see e.g.~\cite{DDGKS}, and in extremal combinatorics (the Erd\H{o}s--Ko--Rado theorem and its many extensions).

 Our invariance principle states that if $f$ is a low-degree function on $\binom{[n]}{k}$ having low influences, then the distributions of $f(X_1,\ldots,X_n)$ and $f(Y_1,\ldots,Y_n)$ are close, where $X_1,\ldots,X_n$ is the uniform distribution on $\binom{[n]}{k}$, and $Y_1,\ldots,Y_n$ are either independent Bernoulli variables with expectation $k/n$, or independent Gaussians with the same mean and variance.

The classical invariance principle is stated only for low-influence functions. Indeed, high-influence functions like $f(x_1,\ldots,x_n) = x_1$ behave very differently on the Boolean cube and on Gaussian space. For the same reason, the condition of low-influence is necessary when comparing functions on the slice and on Gaussian space.

The invariance principle allows us to generalize two fundamental results to this setting: Majority is Stablest and Bourgain's tail bound. Using Bourgain's tail bound, we prove an analog of the Kindler--Safra theorem, which states that if a Boolean function is close to a function of constant degree, then it is close to a junta.

As a corollary of our Kindler--Safra theorem, we prove a stability version of the $t$-intersecting
Erd\H{o}s--Ko--Rado theorem, combining the method of Friedgut~\cite{Friedgut} with calculations of Wilson~\cite{Wilson}. Friedgut showed that a $t$-intersecting family in $\binom{[n]}{k}$ of almost maximal size $(1-\epsilon)\binom{n-t}{k-t}$ is close to an optimal family (a $t$-star) as long as $\lambda < k/n < 1/(t+1) - \zeta$ (when $k/n > 1/(t+1)$, $t$-stars are no longer optimal). We extend his result to the regime $k/n \approx 1/(t+1)$.

\smallskip

The classical invariance principle is stated for \emph{multilinear} polynomials, implicitly relying on the fact that every function on $\{0,1\}^n$ can be represented (uniquely) as a multilinear polynomial, and that multilinear polynomials have the same mean and variance under any product distribution in which the individual factors have the same mean and variance. In particular, the classical invariance principle shows that the correct way to lift a low-degree, low-influence function from $\{0,1\}^n$ to Gaussian space is via its multilinear representation.

The analogue of the collection of low degree multilinear functions on the discrete cube is given by the collection of low degree multilinear polynomials annihilated by the operator $\sum_{i=1}^n \frac{\partial}{\partial x_i}$. Dunkl~\cite{Dunkl76,Dunkl79} showed that every function on the slice has a unique representation as a multilinear polynomial annihilated by the operator $\sum_{i=1}^n \frac{\partial}{\partial x_i}$. We call a polynomial satisfying this condition a \emph{harmonic function}. In a recent paper~\cite{F}, the first author showed that low-degree harmonic functions have \emph{similar} mean and variance under both the uniform distribution on the slice and the corresponding Bernoulli and Gaussian product distributions. This is a necessary ingredient in our invariance principle. 

Our results also apply for function on the slice that are not written in their harmonic representation. Starting with an arbitrary multilinear polynomial $f$, there is a unique harmonic function $\tilde f$ agreeing with $f$ on a given slice. We show that as long as $f$ depends on few coordinates, the two functions $f$ and $\tilde f$ are close as functions over the Boolean cube. This implies that $f$ behaves similarly on the slice, on the Boolean cube, and on Gaussian space.

\smallskip

Our proof combines algebraic, geometric and analytic ideas.
A coupling argument, which crucially relies on properties of harmonic functions, shows that the distribution of a low-degree, low-influence harmonic function $f$ is approximately invariant when we move from the original slice to nearby slices. Taken together, these slices form a thin layer around the original slice, on which $f$ has roughly the same distribution as on the original slice.
The classical invariance principle implies that the distribution of $f$ on the layer is close to its distribution on the Gaussian counterpart of the layer, which turns out to be \emph{identical} to its distribution on all of Gaussian space, completing the proof.

\smallskip

A special case of our main result can be stated as follows.

\begin{theorem} \label{cor:cdf-distance_main}
 For every $\epsilon > 0$ and integer $d \geq 0$ there exists $\tau = \tau(\epsilon,d) > 0$ such that the following holds.
 Let $n \geq 1/\tau$, and let $f$ be a harmonic multilinear polynomial of degree $d$ such that with respect to  the uniform measure $\nu_{pn}$ on the slice $\binom{[n]}{pn}$, the variance of $f$ is at most $1$ and all influences of $f$ are bounded by $\tau$.

 The CDF distance between the distribution of $f$ on the slice $\nu_{pn}$ and the distribution of $f$ under the product measure $\mu_p$ with marginals $\Ber(p)$ is at most $\epsilon$: for all $\sigma \in \RR$,
\[
 |\Pr_{\nu_{pn}}[f < \sigma] - \Pr_{\mu_p}[f < \sigma]| < \epsilon.
\]
\end{theorem}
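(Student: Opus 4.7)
The plan is to decompose the Bernoulli product measure $\mu_p$ into slice measures and then compare adjacent slices via a coupling. Sampling $X\sim\mu_p$ is equivalent to first drawing $|X|\sim\Bin(n,p)$ and then taking $X$ uniform in $\binom{[n]}{|X|}$, so
\[
 \Pr_{\mu_p}[f<\sigma] \;=\; \sum_{k} \Pr[\Bin(n,p)=k]\,\Pr_{\nu_k}[f<\sigma].
\]
By Chernoff, all but $\epsilon/3$ of the binomial mass lies in the window $W=\{k:|k-pn|\le C\sqrt{n\log(1/\epsilon)}\}$ once $n$ is large enough, so it suffices to show that
\[
 \sup_{\sigma\in\RR}\bigl|\Pr_{\nu_k}[f<\sigma]-\Pr_{\nu_{pn}}[f<\sigma]\bigr|\;\le\;\epsilon/3\qquad\text{for every }k\in W.
\]

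The core of the argument is a coupling between neighbouring slices. Given $x\sim\nu_k$, choose a uniformly random coordinate $i$ with $x_i=0$ and set $x'$ by flipping that bit; then $x'\sim\nu_{k+1}$, and because $f$ is multilinear, $f(x')-f(x)=\partial_i f(x)$. The low-influence hypothesis bounds the typical magnitude $\EE[(\partial_i f(x))^2\mid x_i=0]$, while harmonicity $\sum_j\partial_j f\equiv 0$ rewrites the conditional drift as
\[
 \EE_i[\partial_i f(x)\mid x_i=0]\;=\;-\tfrac{1}{n-k}\sum_{j:\,x_j=1}\partial_j f(x),
\]
which by Cauchy--Schwarz and the aggregate influence bound $\sum_j\EE[(\partial_j f)^2]\le d\,\VV f$ is of strictly lower order per step. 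Chaining this coupling $|k-pn|=O(\sqrt{n\log(1/\epsilon)})$ times with a martingale/Efron--Stein style concentration, the cumulative drift and cumulative quadratic variation are both $o(1)$, giving $\EE[(f(X_k)-f(X_{pn}))^2]=o_{\tau}(1)$ for the coupled pair. To pass from this $L^2$ closeness to a uniform CDF bound, I invoke an anti-concentration estimate for low-degree harmonic polynomials on the slice, of the form $\Pr_{\nu_{pn}}[|f-\sigma|<\delta]\le C(d)\,\delta^{1/d}$; such a bound follows from hypercontractivity on $\binom{[n]}{pn}$ (Lee--Yau log-Sobolev, as used in~\cite{F}), and combined with Markov applied to the coupling error yields the required $\epsilon/3$ bound.

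The main obstacle is calibrating the coupling so that the accumulated error over $O(\sqrt{n})$ slice-to-slice steps remains $o(1)$: the naive triangle inequality in $L^2$ only gives $O(\sqrt{d})$, so the proof genuinely needs harmonicity to cancel a linear-in-$j$ drift term and reduce the analysis to controlling quadratic variation against the total (low) influence budget. A secondary subtlety is that the low-influence hypothesis is stated for $\nu_{pn}$ but must be applied on every $\nu_k$ with $k\in W$; here the harmonic representation is essential, since a harmonic polynomial restricts canonically across nearby slices so that slice-influences vary only mildly across the window, keeping the per-step coupling bound uniform in $k\in W$.
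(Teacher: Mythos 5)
There are two genuine gaps, and each one sits exactly where you flag the difficulty but then assert rather than prove the resolution. First, the chaining step. Your one-step coupling is the same as the paper's Lemma~\ref{lem:similar-slices}: the increment $f(x')-f(x)=\partial_i f(x)$ has second moment $O_p(d/n)$, so one step costs $O_p(\sqrt{d/n})$. The problem is the accumulation over the Chernoff window $|k-pn|=O(\sqrt{n\log(1/\epsilon)})$. Your harmonicity rewrite of the drift, $\EE_i[\partial_i f\mid x]=-\tfrac{1}{n-k}\sum_{j:x_j=1}\partial_j f(x)$, followed by Cauchy--Schwarz against $\sum_j\EE[(\partial_j f)^2]\leq d\VV[f]/p(1-p)$, gives $\EE[(\mathrm{drift})^2]\leq \tfrac{k}{(n-k)^2}\cdot\tfrac{d\VV[f]}{p(1-p)}=O_p(d/n)$ --- the \emph{same} order as the raw increment, not ``strictly lower order per step.'' So the cumulative drift over $\Theta(\sqrt{n})$ steps is bounded only by $O(\sqrt{d})$, exactly the obstacle you name; and since the per-step drifts at nearby points can be strongly positively correlated (they are functions of the current configuration that change little per step), there is no automatic martingale-type square-root cancellation across steps either. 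Genuine cancellation does occur inside $\sum_{j:x_j=1}\partial_j f(x)$ (e.g.\ for $f\propto\sum_i(x_{2i-1}-x_{2i})$ that sum equals $-f(x)$ up to scaling, so the drift is $O(1/n)$, not $O(1/\sqrt{n})$), but your Cauchy--Schwarz step destroys precisely that cancellation, and no substitute estimate is given. The paper avoids this entirely: it only chains over a window of width $\sigma\sqrt{n}$ with $\sigma$ small (cost $O_p(\sigma\sqrt{d})$), and handles the rest by passing to Gaussian space, where Lemma~\ref{lem:harmonic-gauss} says conditioning a harmonic $f$ on the value of $x_1+\cdots+x_n$ does not change its law at all; balancing $\sigma$ against the Isaksson--Mossel invariance error yields Theorem~\ref{thm:invariance}.

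Second, the anti-concentration step. The bound you invoke, $\Pr_{\nu_{pn}}[|f-\sigma|<\delta]\leq C(d)\,\delta^{1/d}$ for every degree-$d$ harmonic $f$ of variance $1$, is false as stated: the harmonic polynomial agreeing on the slice with a bounded junta (say a function of two coordinates) has degree $\leq 2$, variance $\Theta(1)$, and takes finitely many values each with constant probability, so it has atoms of constant mass. Consequently such a bound cannot follow from hypercontractivity (Lee--Yau), which holds with no influence hypothesis; any correct version must use the low-influence assumption, and the only known route to it is essentially the invariance principle itself --- which is why the paper first proves L\'evy-distance closeness to $f(\cG_p)$ (Corollary~\ref{cor:levy-distance}) and only then applies Carbery--Wright (Proposition~\ref{pro:carbery-wright}) \emph{under the Gaussian measure} to get the CDF bound of Corollary~\ref{cor:cdf-distance}. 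Your plan of working entirely on the slice and the cube, bypassing Gaussian space, is in the spirit of the later Filmus--Mossel argument mentioned in the introduction, but as written both of its load-bearing steps (cumulative-drift control over the full binomial window, and slice anti-concentration) are unsupported, and the second is false in the generality claimed.
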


\noindent This result is proved in Section~\ref{sec:invariance-cor}.

\smallskip

Subsequent to this work, the first and third author came up with an alternative proof of Theorem~\ref{cor:cdf-distance_main}~\cite{FilmusMossel} which doesn't require the influences of $f$ to be bounded. The proof is completely different, connecting the measures $\mu_p$ and $\nu_{pn}$ directly without recourse to Gaussian space. While the main result of~\cite{FilmusMossel} subsumes the main result of this paper, we believe that both approaches have merit. Furthermore, the applications of the invariance principle appearing here are not reproduced in~\cite{FilmusMossel}.

\paragraph*{Paper organization} An overview of our main results and methods appears in Section~\ref{sec:overview}. Some preliminaries are described in Section~\ref{sec:prel}. We examine harmonic multilinear polynomials in Section~\ref{sec:harmonic-projection}. We prove the invariance principle in Section~\ref{sec:invariance}. Section~\ref{sec:majority} proves \emph{Majority is Stablest}, and Section~\ref{sec:bourgain} proves Bourgain's tail bound, two applications of the main invariance principle. Section~\ref{sec:kindler-safra} deduces a version of the Kindler--Safra theorem from Bourgain's tail bound. Our stability result for $t$-intersecting families appears in Section~\ref{sec:t-intersecting}.
Some open problems are described in Section~\ref{sec:open-problems}.

\section{Overview} \label{sec:overview}

The goal of this section is to provide an overview of the results proved in this paper and the methods used to prove them. It is organized as follows. Some necessary basic definitions appear in Subsection~\ref{sec:overview:def}. The invariance principle, its proof, and some standard consequences are described in Subsection~\ref{sec:overview:invariance}. Some applications of the invariance principle appear in Subsection~\ref{sec:overview:applications}: versions of Majority is stablest, Bourgain's theorem, and the Kindler--Safra theorem for the slice. An application of the Kindler--Safra theorem to extremal combinatorics is described in Subsection~\ref{sec:overview:intersecting}. Finally, Subsection~\ref{sec:overview:harmonic} presents results for non-harmonic multilinear polynomials.

\subsection{Basic definitions} \label{sec:overview:def}

\paragraph*{Measures}
Our work involves three main probability measures, parametrized by an integer $n$ and a probability $p \in (0,1)$:
\begin{itemize}
\item $\mu_p$ is the product distribution supported on the Boolean cube $\{0,1\}^n$ given by $\mu_p(S) = p^{|S|} (1-p)^{n-|S|}$.
\item $\nu_{pn}$ is the uniform distribution on the slice $\binom{[n]}{pn} = \{ (x_1,\ldots,x_n) \in \{0,1\}^n : x_1 + \cdots + x_n = pn \}$ (we assume $pn$ is an integer).
\item $\cG_p$ is the Gaussian product distribution $\Nor((p,\ldots,p), p(1-p)I_n)$ on Gaussian space $\RR^n$.
\end{itemize}

We denote by $\|f\|_\pi$ the L2 norm of the polynomial $f$ with respect to the measure $\pi$.

\paragraph*{Harmonic polynomials}
As stated in the introduction, we cannot expect an invariance principle to hold for all multilinear polynomials, since for example the polynomial $x_1 + \cdots + x_n - pn$ vanishes on the slice but not on the Boolean cube or on Gaussian space. We therefore restrict our attention to \emph{harmonic} multilinear polynomials, which are multilinear polynomials $f$ satisfying the differential equation
\[
 \sum_{i=1}^n \frac{\partial f}{\partial x_i} = 0.
\]
(The name \emph{harmonic}, whose common meaning is different, was lifted from the literature.)

Dunkl~\cite{Dunkl76,Dunkl79} showed that every function on the slice $\binom{[n]}{pn}$ has a unique representation as a harmonic multilinear polynomial whose degree is at most $\min(pn,(1-p)n)$. This is the analog of the well-known fact that every function on the Boolean cube has a unique representation as a multilinear polynomial.

One crucial property of low-degree harmonic multilinear polynomials is invariance of their L2 norm: for any $p \leq 1/2$ and any harmonic multilinear polynomial $f$ of degree $d \leq pn$,
\[
 \|f\|_{\mu_p} = \|f\|_{\cG_p} = \|f\|_{\nu_{pn}} \left(1 \pm O\left(\frac{d^2}{p(1-p)n}\right)\right).
\]
This is proved in Filmus~\cite{F}, and in fact this result (and its applications in the present work) was the main motivation for~\cite{F}.

\paragraph*{Influences}
The classical definition of influence for a function $f$ on the Boolean cube goes as follows. Define $f^{[i]}(x) = f(x^{[i]})$, where $x^{[i]}$ results from flipping the $i$th coordinate of $x$. The $i$th \emph{cube}-influence of $f$ is given by
\[
 \Infc_i[f] = \|f-f^{[i]}\|_{\mu_p}^2 = \left\|\frac{\partial f}{\partial x_i}\right\|_{\mu_p}^2 = \frac{1}{p(1-p)} \sum_{i \in S} \hat{f}(S)^2.
\]

This notion doesn't make sense for functions on the slice, since the slice is not closed under flipping of a single coordinate. Instead, we consider what happens when two coordinates are swapped. Define $f^{(ij)}(x) = f(x^{(ij)})$, where $x^{(ij)}$ results from swapping the $i$th and $j$th coordinates of $x$. The $(i,j)$th \emph{slice}-influence of $f$ is given by
\[
 \Infs_{ij}[f] = \EE_{\nu_{pn}}[(f - f^{(ij)})^2].
\]
The influence of a single coordinate $i$ is then defined as
\[
 \Infs_i[f] = \frac{1}{n} \sum_{j=1}^n \Infs_{ij}[f].
\]

The two definitions are related: Lemma~\ref{lem:derivative-norm} shows that if $d = O(\sqrt{n})$ then
\[
 \Infs_i[f] = O_p\left(\frac{d}{n} \VV[f] + \Infs_c[f]\right).
\]
(The variance can be taken with respect to either the Boolean cube or the slice, due to the L2 invariance property.)

\paragraph*{Noise stability}
The classical definition of noise stability for a function $f$ on the Boolean cube goes as follows:
\[
 \stabilityc_\rho[f] = \EE[f(x) f(y)],
\]
where $x \sim \mu_p$ and $y$ is obtained from $x$ by letting $y_i = x_i$ with probability $\rho$, and $y_i \sim \mu_p$ otherwise.

The analogous definition on the slice is slightly more complicated. For a function $f$ on the slice,
\[
 \stabilitys_\rho[f] = \EE[f(x) f(y)],
\]
where $x \sim \nu_{pn}$ and $y$ is obtained from $x$ by doing $\Po(\frac{n-1}{2}\log\frac{1}{\rho})$ random transpositions (here $\Po(\lambda)$ is a Poisson distribution with mean $\lambda$). That this definition is the correct analog can be seen through the spectral lens:
\[
 \stabilityc_\rho[f] = \sum_d \rho^d \|f^{=d}\|_{\mu_p}^2, \qquad
 \stabilitys_\rho[f] = \sum_d \rho^{d-d(d-1)/n} \|f^{=d}\|_{\mu_{pn}}^2.
\]

Here $f^{=d}$ is the $d$th homogeneous part of $f$ consisting of all monomials of degree $d$.

\subsection{Invariance principle} \label{sec:overview:invariance}

Our main theorem is an invariance principle for the slice.

\begin{theorem:invariance}
 Let $f$ be a harmonic multilinear polynomial of degree $d$ such that with respect to $\nu_{pn}$, $\VV[f] \leq 1$ and $\Infs_i[f] \leq \tau$ for all $i \in [n]$.
 Suppose that $\tau \leq I_p^{-d}\delta^K$ and $n \geq I_p^d/\delta^K$, for some constants $I_p,K$.
 For any $C$-Lipschitz functional $\psi$ and for $\pi \in \{\cG_p,\mu_p\}$,
\[
 |\EE_{\nu_{pn}}[\psi(f)] - \EE_{\pi}[\psi(f)]| = O_p(C\delta).
\]
\end{theorem:invariance}
\begin{proof}[Proof sketch]
Let $\psi$ be a Lipschitz functional and $f$ a harmonic multilinear polynomial of unit variance, low slice-influences, and low degree $d$.
A simple argument (mentioned above) shows that $f$ also has low cube-influences, and this implies that
\[
 \EE_{\nu_k}[\psi(f)] \approx \EE_{\nu_{pn}}[\psi(f)] \pm O_p\left(\frac{|k-np|}{\sqrt{n}} \cdot \sqrt{d}\right).
\]
The idea is now to apply the multidimensional invariance principle jointly to $f$ and to $S = \frac{x_1+\cdots+x_n-np}{\sqrt{p(1-p)n}}$, deducing
\[
 \EE_{\mu_p} [\psi(f) \charf{|S| \leq \sigma}] = \EE_{\cG_p} [\psi(f) \charf{|S| \leq \sigma}] \pm \epsilon.
\]
Let $\gamma_{p,q}$ be the restriction of $\cG_p$ to the Gaussian slice $\{ (x_1,\ldots,x_n) \in \RR^n : x_1 + \cdots + x_n = qn \}$. An easy argument shows that since $f$ is harmonic, the distribution of $f(\cG_p)$ and $f(\gamma_{p,q})$ is identical, and so
\[
 \EE_{\cG_p} [\psi(f) \charf{|S| \leq \sigma}] = \Pr_{\cG_p}[|S| \leq \sigma] \EE_{\cG_p} [\psi(f)].
\]
Similarly,
\[
 \EE_{\mu_p}[\psi(f) \charf{|S| \leq \sigma}] = \Pr_{\mu_p}[|S| \leq \sigma] (\EE_{\mu_p} [\psi(f)] \pm O_p(\sigma \sqrt{d})).
\]
Since $\Pr_{\cG_p}[|S| \leq \sigma] \approx \Pr_{\mu_p}[|S| \leq \sigma] = \Theta_p(\sigma)$, we can conclude that
\[
 \EE_{\nu_{pn}} [\psi(f)] \approx \EE_{\cG_p} [\psi(f)] \pm O_p\left(\sigma \sqrt{d} + \frac{\epsilon}{\sigma}\right).
\]
By choosing $\sigma$ appropriately, we balance the two errors and obtain our invariance principle.
\end{proof}

As corollaries, we bound the L\'evy and CDF distances between $f(\nu_{pn})$, $f(\mu_p)$ and $f(\cG_p)$:
\begin{corollary:levy-distance}
 Let $f$ be a harmonic multilinear polynomial of degree $d$ such that with respect to $\nu_{pn}$, $\VV[f] \leq 1$ and $\Infs_i[f] \leq \tau$ for all $i \in [n]$.
 There are parameters $X_p,X$ such that for any $0 < \epsilon < 1/2$, if $\tau \leq X_p^{-d}\epsilon^X$ and $n \geq X_p^d/\epsilon^X$ then the L\'evy distance between $f(\nu_{pn})$ and $f(\pi)$ is at most $\epsilon$, for $\pi \in \{\cG_p,\mu_p\}$. In other words, for all $\sigma$,
\[
 \Pr_{\nu_{pn}}[f \leq \sigma - \epsilon] - \epsilon \leq \Pr_{\pi}[f \leq \sigma] \leq \Pr_{\nu_{pn}}[f \leq \sigma + \epsilon] + \epsilon.
\]
\end{corollary:levy-distance}

\begin{corollary:cdf-distance}
 Let $f$ be a harmonic multilinear polynomial of degree $d$ such that with respect to $\nu_{pn}$, $\VV[f] = 1$ and $\Infs_i[f] \leq \tau$ for all $i \in [n]$.
 There are parameters $Y_p,Y$ such that for any $0 < \epsilon < 1/2$, if $\tau \leq (Y_pd)^{-d}\epsilon^{Yd}$ and $n \geq (Y_pd)^d/\epsilon^{Yd}$ then the CDF distance between $f(\nu_{pn})$ and $f(\pi)$ is at most $\epsilon$, for $\pi \in \{\cG_p,\mu_p\}$. In other words, for all $\sigma$,
\[
 |\Pr_{\nu_{pn}}[f \leq \sigma] - \Pr_{\pi}[f \leq \sigma]| \leq \epsilon.
\]
\end{corollary:cdf-distance}

The proofs of these corollaries closely follows the proof of the analogous results in~\cite{MOO}.

\subsection{Applications} \label{sec:overview:applications}

As applications to our invariance principle, we prove analogues of three classical results in analysis of Boolean functions: Majority is stablest; Bourgain's theorem; and the Kindler--Safra theorem:

\begin{theorem:majority} 
 Let $f\colon \binom{[n]}{pn} \to [0,1]$ have expectation $\mu$ and satisfy $\Infs_i[f] \leq \tau$ for all $i \in [n]$. For any $0 < \rho < 1$, we have
\[
 \stabilitys_\rho[f] \leq \Gamma_\rho(\mu) + O_{p,\rho}\left(\frac{\log\log\frac{1}{\alpha}}{\log\frac{1}{\alpha}}\right) + O_\rho\left(\frac{1}{n}\right), \text{ where } \alpha = \min(\tau,\tfrac{1}{n}),
\]
 where $\Gamma_\rho(\mu)$ is the probability that two $\rho$-correlated Gaussians be at most $\Phi^{-1}(\mu)$ (here $\Phi$ is the CDF of a standard Gaussian).
\end{theorem:majority}

\begin{theorem:bourgain-slice} 
 Fix $k \geq 2$. Let $f\colon \binom{[n]}{pn} \to \{\pm 1\}$ satisfy $\Infs_i[f^{\leq k}] \leq \tau$ for all $i \in [n]$.
 For some constants $W_{p,k},C$, if $\tau \leq W_{p,k}^{-1}\VV[f]^C$ and $n \geq W_{p,k}/\VV[f]^C$ then
\[ \|f^{>k}\|^2 = \Omega\left(\frac{\VV[f]}{\sqrt{k}}\right). \]
\end{theorem:bourgain-slice}

\begin{theorem:kindler-safra} 
 Fix the parameter $k \geq 2$. Let $f\colon \binom{[n]}{pn} \to \{\pm 1\}$ satisfy $\|f^{>k}\|^2 = \epsilon$. There exists a function $h\colon \binom{[n]}{pn} \to \{\pm 1\}$ of degree $k$ depending on $O_{k,p}(1)$ coordinates (that is, invariant under permutations of all other coordinates) such that
 \[ \|f-h\|^2 = O_{p,k}\left(\epsilon^{1/C} + \frac{1}{n^{1/C}}\right), \]
 for some constant $C$.
\end{theorem:kindler-safra}

The proof of Theorem~\ref{thm:majority} closely follows its proof in~\cite{MOO}. The proofs of the other two theorems closely follows analogous proofs in~\cite{KKO}.

\subsection{\texorpdfstring{$t$}{t}-Intersecting families} \label{sec:overview:intersecting}

As an application of our Kindler--Safra theorem, we prove a stability result for $t$-intersecting families.

First, a few definitions:

\begin{itemize}
 \item A $t$-intersecting family $\cF \subseteq \binom{[n]}{k}$ is one in which $|A \cap B| \geq t$ for any $A,B \in \cF$.
 \item A $t$-star is a family of the form $\{A \in \binom{[n]}{k} : A \supseteq J\}$, where $|J|=t$.
 \item A $(t,1)$-Frankl family is a family of the form $\{A \in \binom{[n]}{k} : |A \cap J| \geq t+1\}$, where $|J|=t+2$.
\end{itemize}

Ahlswede and Khachatrian~\cite{AK2,AK3} proved that if $n > (t+1)(k-t+1)$ and $\cF$ is an intersecting family, then $|\cF| \leq \binom{n-t}{k-t}$, and furthermore equality holds if and only if $\cF$ is a $t$-star. They also proved that when $n = (t+1)(k-t+1)$ the same upper bound holds, but now equality holds for both $t$-stars and $(t,1)$-Frankl families.

A corresponding stability result was proved by Friedgut~\cite{Friedgut}:

\begin{theorem:tEKR-Friedgut}
 Let $t \geq 1$, $k \geq t$, $\lambda,\zeta > 0$, and $\lambda n < k < (\frac{1}{t+1} - \zeta) n$.
 Suppose $\cF \subseteq \binom{[n]}{k}$ is a $t$-intersecting family of measure $|\cF| = \binom{n-t}{k-t} - \epsilon \binom{n}{k}$. Then there exists a family $\cG$ which is a $t$-star such that
 \[ \frac{|\cF \triangle \cG|}{\binom{n}{k}} = O_{t,\lambda,\zeta}(\epsilon). \]
\end{theorem:tEKR-Friedgut}

Friedgut's theorem requires $k/n$ to be bounded away from $1/(t+1)$. Using the Kindler--Safra theorem on the slice rather than the Kindler--Safra theorem on the Boolean cube (which is what Friedgut uses), we can do away with this limitation:

\begin{theorem:tEKR-stability}
Let $t \geq 2$, $k \geq t+1$ and $n = (t+1)(k-t+1) + r$, where $r > 0$. Suppose that $k/n \geq \lambda$ for some $\lambda > 0$.
Suppose $\cF \subseteq \binom{[n]}{k}$ is a $t$-intersecting family of measure $|\cF| = \binom{n-t}{k-t} - \epsilon \binom{n}{k}$. Then there exists a family $\cG$ which is a $t$-star or a $(t,1)$-Frankl family such that
\[ \frac{|\cF \triangle \cG|}{\binom{n}{k}} = O_{t,\lambda}\left(\max\left(\left(\frac{k}{r}\right)^{1/C},1\right)\epsilon^{1/C} + \frac{1}{n^{1/C}} \right), \]
for some constant $C$.

Furthermore, there is a constant $A_{t,\lambda}$ such that $\epsilon \leq A_{t,\lambda} \min(r/k,1)^{C+1}$ implies that $\cG$ is a $t$-star.
\end{theorem:tEKR-stability}

Our proof closely follows the argument of Friedgut~\cite{Friedgut}, transplanting it from the setting of the Boolean cube to the setting of the slice, using calculations of Wilson~\cite{Wilson} in the latter setting. The argument involves certain subtelties peculiar to the slice.

\subsection{Non-harmonic functions} \label{sec:overview:harmonic}

All results we have described so far apply only to harmonic multilinear polynomials. We mentioned that some of these results trivially don't hold for some non-harmonic multilinear polynomials: for example, $\sum_{i=1}^n x_i - np$ doesn't exhibit invariance. This counterexample, however, is a function depending on all coordinates. In contrast, we can show that some sort of invariance does apply for general multilinear polynomials that depend on a small number of coordinates:

\begin{theorem:harmonic-projection}
 Let $f$ be a multilinear polynomial depending on $d$ variables, and let $\tilde{f}$ be the unique harmonic multilinear polynomial agreeing with $f$ on $\binom{[n]}{pn}$, where $d \leq pn \leq n/2$. For $\pi \in \{\mu_p,\cG_p\}$ we have
\[
 \|f - \tilde{f}\|_\pi^2 = O\left(\frac{d^22^d}{p(1-p)n}\right)\|f\|_\pi^2.
\]
\end{theorem:harmonic-projection}
\begin{proof}[Proof sketch]
 Direct calculation (appearing in Lemma~\ref{lem:character-invariance}) shows that if $\omega$ is a Fourier character than
\[
 \| \omega - \tilde{\omega} \|_{\mu_p}^2 = \| \omega - \tilde{\omega} \|_{\cG_p}^2 = O\left(\frac{d^2}{p(1-p)n}\right),
\]
 where $\tilde{\omega}$ is defined analogously to $\tilde{f}$.
 
 We can assume without loss of generality that $f$ depends only on the variables in $[d] = \{1,\ldots,d\}$.
 Since $\tilde{f} = \sum_{S \subseteq [d]} \hat{f}(S) \tilde{\omega}_S$,
\[
 \|f - \tilde{f}\|_\pi^2 \leq 2^d \sum_{S \subseteq [d]} \hat{f}(S)^2 O\left(\frac{d^2}{p(1-p)n}\right) = O\left(\frac{d^22^d}{p(1-p)n}\right)\|f\|_\pi^2,
\]
 using the Cauchy--Schwartz inequality.
\end{proof}

The idea of the proof is to prove a similar results for Fourier characters (Lemma~\ref{lem:character-invariance}) for individual Fourier characters, and then to invoke the Cauchy--Schwartz inequality.

As a consequence, if we have a multilinear polynomial $f$ depending on a small number of variables, its harmonic projection $\tilde{f}$ (defined as in the theorem) has a similar expectation, L2 norm, variance and noise stability (Corollary~\ref{cor:harmonic-projection}). This implies, for example, that our Majority is stablest theorem is tight: the harmonic projection of the majority of a small number of indices serves as the tight example.

\section{Preliminaries} \label{sec:prel}

\paragraph*{Notation} The notation $\charf{E}$ is the characteristic function of the event $E$. Expectation, variance and covariance are denoted by $\EE$, $\VV$ and $\Cov$, respectively.
The sign function is denoted $\sgn$.
The notation $[n]$ denotes the set $\{1,\ldots,n\}$. The \emph{slice} $\binom{[n]}{k}$ consists of all subsets of $[n]$ of cardinality $k$. We often identify subsets of $[n]$ with their characteristic vectors in $\{0,1\}^n$.

The notation $\Bin(n,p)$ denotes a binomial distribution with $n$ trials and success probability $p$. The notation $\Po(\lambda)$ denotes a Poisson distribution with expectation $\lambda$. The notation $\Nor(\mu,\Sigma^2)$ denotes a normal distribution with mean $\mu$ and covariance matrix $\Sigma^2$. For a scalar $p$, we use $\mathbf{p}$ to denote a constant $p$ vector (of appropriate dimension which is clear from context) and $I_n$ to denote the $n\times n$ identity matrix.

For a probability distribution $\pi$, $ \| f \| = \|f\|_\pi = \sqrt{\EE_\pi[f^2]}$ is the L2~norm of $f$ with respect to $\pi$.  Note that $\|f\|_1 = \EE[|f|]$.

The notation $a^{\underline{b}}$ denotes the falling factorial function: $a^{\underline{b}} = a(a-1)\cdots(a-b+1)$.

Asymptotic notation ($O(\cdot)$ and the like) will always denote non-negative expressions. When the expression can be positive or negative, we use the notation $\pm O(\cdot)$. The underlying limit is always $n\to\infty$. If the hidden constant depends on variables $V$, we use the notation $O_V(\cdot)$.

A \emph{$C$-Lipschitz functional} is a function $\psi\colon\RR\to\RR$ satisfying $|\psi(x)-\psi(y)| \leq C|x-y|$, which implies that for functions
$f,g$ on the same domain:
\begin{lemma} \label{lem:lipschitz-cs}
For every $C$-Lipschitz functional $\psi$ and functions $f,g$ on the same domain,
\[ |\EE[\psi(f)] - \EE[\psi(g)]| \leq C\|f-g\|. \]
\end{lemma}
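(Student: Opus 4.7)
The plan is to chain three elementary inequalities, each of which the reader will recognize: the triangle inequality for expectation, the Lipschitz bound applied pointwise, and Cauchy--Schwarz to pass from the $L^1$ norm to the $L^2$ norm $\|\cdot\|$ as defined in the Preliminaries.

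Concretely, I would first observe that $|\EE[\psi(f)] - \EE[\psi(g)]| = |\EE[\psi(f) - \psi(g)]| \leq \EE[|\psi(f) - \psi(g)|]$ by the standard triangle inequality for expectation (or equivalently Jensen's inequality applied to $|{\cdot}|$). Next, since $\psi$ is $C$-Lipschitz, the pointwise bound $|\psi(f(x)) - \psi(g(x))| \leq C|f(x) - g(x)|$ holds for every $x$ in the common domain; taking expectations preserves this to give $\EE[|\psi(f) - \psi(g)|] \leq C\, \EE[|f - g|]$.

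Finally, I would apply Cauchy--Schwarz in the form $\EE[|f - g|] = \EE[|f - g| \cdot 1] \leq \sqrt{\EE[(f-g)^2]} \cdot \sqrt{\EE[1]} = \|f - g\|$, where the norm is the $L^2$ norm with respect to whatever probability measure $\pi$ underlies the expectation (as defined in the Preliminaries, $\|f\| = \sqrt{\EE_\pi[f^2]}$). Stringing these three steps together yields the claimed inequality.

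There is no real obstacle here: the lemma is a standard fact, and each step is a one-line inequality. The only thing worth being careful about is that the domain of $f$ and $g$ is a probability space, so that $\EE[1] = 1$ and Cauchy--Schwarz produces exactly the $L^2$ norm rather than some weighted variant; this is guaranteed by the convention of the paper that all expectations are with respect to a fixed probability measure $\pi$.
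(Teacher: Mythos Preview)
Your proof is correct and is exactly the standard argument. The paper in fact does not write out a proof of this lemma at all, treating it as an immediate consequence of the Lipschitz condition; your three-step chain (triangle inequality for expectation, pointwise Lipschitz bound, then $\EE[|f-g|] \le \|f-g\|$ via Cauchy--Schwarz) is precisely the intended justification.
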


\paragraph*{Probability distributions} Our argument will involve several different probability distributions on $\RR^n$ (where $n$ will always be clear from context):
\begin{itemize}
\item $\mu_p$ is the product distribution supported on $\{0,1\}^n$ given by $\mu_p(S) = p^{|S|} (1-p)^{n-|S|}$.
\item $\nu_k$ is the uniform distribution on the slice $\binom{[n]}{k}$.
\item $\cG_p$ is the Gaussian product distribution $\Nor((p,\ldots,p), p(1-p)I_n)$.
\item $\gamma_{p,q} = \Nor((q,\ldots,q), \Sigma)$, where $\Sigma_{i,j} = \frac{n-1}{n} p(1-p) \delta(i = j)  -\frac{n-1}{n} \frac{p(1-p)}{n-1} \delta(i \neq j)$ for $1 \leq i,j \leq n$.
\end{itemize}

As is well-known, the distribution $\gamma_{p,q}$ results from conditioning $\cG_p$ on the sum being $qn$.

\begin{lemma} \label{lem:gamma-conditioning}
Let $(X_1,\ldots,X_n) \sim \cG_p$. The distribution of $(X_1,\ldots,X_n)$ conditioned on $X_1+\cdots+X_n=qn$ is $\gamma_{p,q}$.
\end{lemma}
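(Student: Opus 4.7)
The plan is to invoke the standard conditional formula for multivariate Gaussians. The joint distribution of $(X_1,\ldots,X_n)$ and the scalar statistic $S := X_1 + \cdots + X_n$ is Gaussian, so conditioning on $\{S = qn\}$ yields another Gaussian distribution, and the task reduces to identifying its mean vector and covariance matrix and checking that they agree with those of $\gamma_{p,q}$.

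First I would read off the relevant first- and second-moment data from $\cG_p = \Nor(\mathbf{p}, p(1-p)I_n)$: we have $\EE[S] = np$, $\VV[S] = np(1-p)$, and $\Cov(X_i, S) = p(1-p)$ for every $i$, so the cross-covariance vector between $(X_1,\ldots,X_n)$ and $S$ is $p(1-p)\mathbf{1}$. The conditional-mean formula then reduces to
\[
 \mathbf{p} + p(1-p)\mathbf{1} \cdot \frac{1}{np(1-p)} \cdot (qn - np) = \mathbf{p} + (q-p)\mathbf{1} = \mathbf{q},
\]
matching the mean required of $\gamma_{p,q}$.

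Next I would apply the conditional-covariance formula to obtain
\[
 p(1-p)I_n - p(1-p)\mathbf{1} \cdot \frac{1}{np(1-p)} \cdot p(1-p)\mathbf{1}^T = p(1-p)\left(I_n - \frac{1}{n}\mathbf{1}\mathbf{1}^T\right),
\]
whose diagonal entries are $\frac{n-1}{n}p(1-p)$ and whose off-diagonal entries are $-\frac{p(1-p)}{n} = -\frac{n-1}{n}\cdot\frac{p(1-p)}{n-1}$; this coincides with the matrix $\Sigma$ in the statement of the lemma.

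There is no real obstacle here, as the lemma is essentially a routine application of the conditional Gaussian formula. The only subtlety worth flagging is that the resulting covariance matrix is singular, with kernel spanned by $\mathbf{1}$; this reflects the fact that the conditional distribution is supported on the affine hyperplane $\{x \in \RR^n : x_1 + \cdots + x_n = qn\}$ rather than all of $\RR^n$, and is entirely consistent with the definition of $\gamma_{p,q}$. Alternatively, one could give a coordinate-free argument by orthogonally decomposing $\RR^n$ into the line spanned by $\mathbf{1}$ and its orthogonal complement $H$: under $\cG_p$ the projections onto these two subspaces are independent Gaussians, so conditioning on the value of the $\mathbf{1}$-component leaves the $H$-component unchanged, yielding the same mean and covariance.
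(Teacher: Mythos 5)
Your proposal is correct and follows essentially the same route as the paper: condition the joint Gaussian $(X_1,\ldots,X_n,S)$ on $S=qn$ via the standard conditional mean and covariance formulas, obtaining mean $\mathbf{q}$ and covariance $p(1-p)\left(I_n - \tfrac{1}{n}\mathbf{1}\mathbf{1}'\right)$, which matches $\gamma_{p,q}$. Your added remarks on the singularity of the covariance and the coordinate-free decomposition are fine but not needed beyond what the paper does.
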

\begin{proof}
Let $S=X_1+\cdots+X_n$, and consider the multivariate Gaussian distribution $(X_1,\ldots,X_n,S)$, whose distribution is easily calculated to be $\Nor(\begin{pmatrix} \mathbf{p} & pn \end{pmatrix}, p(1-p) \begin{pmatrix} I_n & \mathbf{1} \\ \mathbf{1}' & n \end{pmatrix})$. Let $(Y_1,\ldots,Y_n)$ be the distribution of $(X_1,\ldots,X_n,S)$ conditioned on $S = qn$, which is well-known to be multivariate Gaussian. Using well-known formulas, the mean of this distribution is $\mathbf{p} + \mathbf{1} n^{-1} (qn-pn) = \mathbf{q}$ (as can be derived directly), and its covariance matrix is $p(1-p)(I_n - \mathbf{1} n^{-1} \mathbf{1}')$. The diagonal elements are $\VV[Y_i] = p(1-p)(1 - \frac{1}{n})$ and the off-diagonal ones are $\Cov(Y_i,Y_j) = p(1-p)(-\frac{1}{n})$.
\end{proof}

We can also go in the other direction.

\begin{lemma} \label{lem:gamma-unconditioning}
Let $(X_1,\ldots,X_n) \sim \gamma_{p,q}$, let $Y \sim \Nor(p-q,\frac{p(1-p)}{n})$, and let $Y_i = X_i + Y$. Then $(Y_1,\ldots,Y_n) \sim \cG_p$.
\end{lemma}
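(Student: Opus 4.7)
The plan is to exploit the fact that $(Y_1,\ldots,Y_n)$ is obtained from the jointly Gaussian vector $(X_1,\ldots,X_n,Y)$ by an affine map (adding the scalar $Y$ to each coordinate), so $(Y_1,\ldots,Y_n)$ is automatically a multivariate Gaussian. It therefore suffices to compute its mean vector and covariance matrix and check that they match those of $\cG_p$. Conceptually, this is just the inverse operation to the conditioning done in Lemma~\ref{lem:gamma-conditioning}: removing the average direction gives $\gamma_{p,q}$, so re-injecting an independent Gaussian along that direction with the right variance and mean must recover $\cG_p$.

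First I would compute the mean. Since $X_i$ has mean $q$ and $Y$ has mean $p-q$, linearity gives $\EE[Y_i] = q + (p-q) = p$ for every $i$, matching $\cG_p$.

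Next I would compute the covariance. Since $Y$ is independent of $X$, for all $i,j$ we have
\[
 \Cov(Y_i,Y_j) = \Cov(X_i + Y, X_j + Y) = \Cov(X_i,X_j) + \VV[Y].
\]
On the diagonal this becomes $\frac{n-1}{n}p(1-p) + \frac{p(1-p)}{n} = p(1-p)$, and off the diagonal it becomes $-\frac{p(1-p)}{n} + \frac{p(1-p)}{n} = 0$. Hence the covariance matrix is exactly $p(1-p) I_n$, so $(Y_1,\ldots,Y_n) \sim \Nor(\mathbf{p},p(1-p)I_n) = \cG_p$.

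There is no real obstacle here: once one notes the joint Gaussianity the proof reduces to verifying that the chosen parameters of $Y$ (mean $p-q$ to fix the mean, variance $p(1-p)/n$ to cancel the off-diagonal $-p(1-p)/n$ of $\Sigma$ and simultaneously promote the diagonal $\frac{n-1}{n}p(1-p)$ up to $p(1-p)$) are precisely the unique values that make both the mean and covariance match $\cG_p$.
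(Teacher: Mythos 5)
Your proof is correct and follows essentially the same route as the paper's: observe that $(Y_1,\ldots,Y_n)$ is jointly Gaussian and verify that its mean vector is $\mathbf{p}$ and its covariance matrix is $p(1-p)I_n$, using $\VV[Y_i]=\VV[X_i]+\VV[Y]$ and $\Cov(Y_i,Y_j)=\Cov(X_i,X_j)+\VV[Y]$ (both you and the paper implicitly use the independence of $Y$ from $(X_1,\ldots,X_n)$). Your version just spells out the arithmetic slightly more explicitly.
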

\begin{proof}
As is well-known, $Y_1,\ldots,Y_n$ is a multivariate Gaussian, and it is easy to see that its mean is $\mathbf{p}$. We have $\VV[Y_i] = \VV[X_i] + \VV[Y] = p(1-p)$ and $\Cov(Y_i,Y_j) = \Cov(X_i,X_j) + \VV[Y] = 0$. The lemma follows.
\end{proof}

The distributions $\mu_p$ and $\nu_k$ are very close for events depending on $o(\sqrt{n})$ coordinates.

\begin{lemma} \label{lem:measures-close}
 Let $A$ be an event depending on $J$ coordinates, where $J^2 \leq n$.
 Then \[ |\nu_{pn}(A) - \mu_p(A)| \leq \frac{J^2}{4p(1-p)n} \mu_p(A). \]
\end{lemma}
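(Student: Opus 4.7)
The plan is to reduce the claim to a pointwise estimate on the ratio of the two $J$-dimensional marginal distributions. Since $A$ depends only on $J$ coordinates, we may assume (after relabelling) that $A = A' \times \{0,1\}^{n-J}$ for some $A' \subseteq \{0,1\}^J$. Writing $\mu_p^{(J)}$ and $\nu_{pn}^{(J)}$ for the marginals on the first $J$ coordinates and $r(x) = \nu_{pn}^{(J)}(x)/\mu_p^{(J)}(x)$, the triangle inequality yields
\[
 |\nu_{pn}(A) - \mu_p(A)| \;\leq\; \sum_{x \in A'} |r(x) - 1| \, \mu_p^{(J)}(x),
\]
so it suffices to establish the pointwise bound $|r(x) - 1| \leq J^2/(4p(1-p)n)$ for every $x \in \{0,1\}^J$; summing then yields the lemma.

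A direct calculation shows that $r(x)$ depends only on $j = |x|$ and equals
\[
 r(x) \;=\; \frac{\binom{n-J}{pn-j}/\binom{n}{pn}}{p^{j}(1-p)^{J-j}} \;=\; \frac{\prod_{i=0}^{j-1}(1 - i/(pn)) \prod_{i=0}^{J-j-1}(1 - i/((1-p)n))}{\prod_{i=0}^{J-1}(1 - i/n)}.
\]
I would then take logarithms and expand via $\log(1-u) = -u + O(u^2)$. Using the algebraic identity
\[
 \frac{j(j-1)}{p} + \frac{(J-j)(J-j-1)}{1-p} - J(J-1) = \frac{(j-pJ)^2 + (2p-1)j - p^2 J}{p(1-p)},
\]
the first-order part of $\log r(x)$ equals $-[(j-pJ)^2 + (2p-1)j - p^2 J]/(2p(1-p)n)$, which is controlled by $J^2/(2p(1-p)n)$ using $(j-pJ)^2 \leq J^2$ together with trivial bounds on the lower-order terms. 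The second-order Taylor remainders contribute $O(J^3/(p(1-p)n)^2)$, which is dominated by the main term under $J^2 \leq n$; converting from $\log r$ back to $r-1$ uses the elementary estimate $|e^g - 1| \leq |g|(1 + O(|g|))$ for $|g| = O(1)$.

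The hard part is the careful bookkeeping required to extract the precise constant $\tfrac{1}{4}$ in the bound: the worst case arises at the endpoints $j \in \{0, J\}$, where $(j-pJ)^2$ is largest, and matching the stated constant requires tightly balancing this leading contribution against the linear corrections $(2p-1)j$ and $p^2J$, and absorbing the second-order Taylor remainders via $J^2 \leq n$. The hypothesis $J^2 \leq n$ is also what keeps every factor in the product formula for $r(x)$ bounded away from zero, validating the logarithmic expansion throughout.
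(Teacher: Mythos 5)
Your reduction to a pointwise bound on the ratio $r(x)$ of $J$-dimensional marginals is exactly the paper's first step (they phrase it as a triangle inequality over atomic cylinders, but it is the same decomposition), and your closed-form expression for $r$ and the algebraic identity you quote are both correct. Where you diverge is the method of estimating $r-1$: the paper does not take logarithms at all. It separates the ratio into $\frac{(n-k)^{\underline{\ell}}k^{\underline{J-\ell}}}{(n-k)^{\ell}k^{J-\ell}}$ and $\frac{n^J}{n^{\underline{J}}}$ and bounds each side with the elementary inequalities $\prod(1-a_i)\ge 1-\sum a_i$ and $\frac{1}{1-x}\le 1+2x$ (valid for $x\le 1/2$, which is where $J^2\le n$ enters). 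That argument is shorter and avoids the Taylor-remainder bookkeeping entirely.

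The genuine gap in your proposal is that you stop precisely at the step the lemma is about. You assert that the first-order part of $\log r$ is ``controlled by $J^2/(2p(1-p)n)$,'' but the numerator $(j-pJ)^2+(2p-1)j-p^2J$ is only bounded in absolute value by $J^2+2J$, and tracking what happens to the $+2J$ and the sign, combining it with the $O(J^3/(p(1-p)n)^2)$ second-order remainder, and then passing from a bound on $|\log r|$ to a bound on $|r-1|$ with the stated constant is the whole content of the lemma; it is not ``bookkeeping.'' In fact a simple check (e.g.\ $j=J$, $p$ small) shows that the first-order term alone already gives $|\log r|\approx (1-p)J(J-1)/(2pn)$, which exceeds $J^2/(4p(1-p)n)$ for $J\ge 3$ and small $p$; so the route you describe cannot work unless one also exploits that $|e^{-x}-1|\le\min(x,1)$ and treats the regime where the bound is trivial ($J^2\ge 4p(1-p)n$) separately. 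None of this is done.

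A second, smaller error: you claim ``the hypothesis $J^2\le n$ is also what keeps every factor in the product formula for $r(x)$ bounded away from zero, validating the logarithmic expansion.'' This is false. The factors of the form $1-i/(pn)$ for $i<j$ are controlled by $J/(pn)$, not $J/n$, and for small $p$ can be close to $0$ or even nonpositive (if $j>pn$, in which case $\nu_{pn}(A)=0$ and $\log r$ is undefined). The hypothesis $J^2\le n$ alone does not rule this out, so the Taylor expansion is not uniformly valid and the degenerate case $\nu_{pn}(A)=0$ must be handled separately, which you do not do. The paper's direct product estimates sidestep this issue because $\prod(1-a_i)\ge 1-\sum a_i$ holds (and is vacuous) even when the product is $\le 0$.

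Incidentally, the paper's own final inequality $\max\bigl(\frac{J^2}{2(1-p)n},\frac{J^2}{2pn}\bigr)\le\frac{J^2}{4p(1-p)n}$ is only an equality at $p=1/2$ and actually fails for other $p$; the argument as written gives the constant $\frac{1}{2\min(p,1-p)}$ rather than $\frac{1}{4p(1-p)}$. This does not affect any application in the paper (the lemma is only ever invoked with $J$ bounded and $n\to\infty$), but it is a caution that the precise constant here is genuinely delicate, and a proof that waves at it as ``careful bookkeeping'' is not a proof.
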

\begin{proof}
 The triangle inequality shows that we can assume that $A$ is the event $x_1 = \cdots = x_\ell = 0$, $x_{\ell+1},\ldots,x_J = 1$ for some $\ell$.
 Let $k = pn$. Clearly $\mu_p(A) = (1-p)^\ell p^{J-\ell}$, whereas
\[
 \nu_{pn}(A) = \frac{(n-k)^{\underline{\ell}} k^{\underline{J-\ell}}}{n^{\underline{J}}}.
\]
 We have
\[
 \frac{n^{\underline{J}}}{n^J} = \left(1-\frac{1}{n}\right)\cdots\left(1-\frac{J-1}{n}\right) \geq 1 - \frac{1+\cdots+(J-1)}{n} \geq 1 - \frac{J^2}{2n}.
\]
 Therefore
\[
 \nu_{pn}(A) \leq \frac{(n-k)^\ell k^{J-\ell}}{n^J (1 - J^2/(2n))} \leq \mu_p(A) \left(1 + \frac{J^2}{n}\right),
\]
 using $\frac{1}{1-x} \leq 1+2x$, which is valid for $x \leq 1/2$.

 Similarly,
\[
 \frac{(n-k)^{\underline{\ell}} k^{\underline{J-\ell}}}{(n-k)^\ell k^\ell} \geq 1 - \frac{\ell^2}{2(1-p)n} - \frac{(J-\ell)^2}{2pn} \geq
 1 - \max\left(\frac{J^2}{2(1-p)n}, \frac{J^2}{2pn}\right) \geq 1 - \frac{J^2}{4p(1-p)n}.
\]
 Therefore
\[
 \nu_{pn}(A) \geq \frac{(n-k)^\ell k^\ell (1-J^2/(4p(1-p)n))}{n^\ell} = \mu_p(A) \left(1 - \frac{J^2}{4p(1-p)n}\right).
\]
 This completes the proof.
\end{proof}

\subsection{Harmonic multilinear polynomials} \label{sec:harmonic}

Our argument involves extending a function over a slice $\binom{[n]}{k}$ to a function on $\RR^n$, just as in the classical invariance principle, a function on $\{0,1\}^n$ is extended to $\RR^n$ by writing it as a multilinear polynomial. In our case, the correct way of extending a function over a slice to $\RR^n$ is by interpreting it as a \emph{harmonic multilinear polynomial}. Our presentation follows~\cite{F}, where the proofs of various results claimed in this section can be found. The basis in Definition~\ref{def:harmonic-expansion} below also appears in earlier work of Srinivasan~\cite{Srinivasan}, who constructed it and showed that it is orthogonal with respect to all exchangeable measures.

\begin{definition} \label{def:harmonic}
Let $f \in \RR[x_1,\ldots,x_n]$ be a formal polynomial. We say that $f$ is \emph{multilinear} if $\frac{\partial^2 f}{\partial x_i^2} = 0$ for all $i \in [n]$. We say that $f$ is \emph{harmonic} if
\[ \sum_{i=1}^n \frac{\partial f}{\partial x_i} = 0. \]
\end{definition}

The somewhat mysterious condition of harmonicity arises naturally from the representation theory of the Johnson association scheme. Just as any function on the Boolean cube $\{0,1\}^n$ can be represented uniquely as a multilinear polynomial (up to an affine transformation, this is just the Fourier--Walsh expansion), every function on the slice $\binom{[n]}{k}$ can be represented uniquely as a harmonic multilinear polynomial, using the identification
\[ \binom{[n]}{k} = \{(x_1,\ldots,x_n) \in \{0,1\}^n : \sum_{i=1}^n x_i = k\}. \]

\begin{lemma}[{\cite[Theorem 4.1]{F}}] \label{lem:slice-harmonic}
Every real-valued function $f$ on the slice $\binom{[n]}{k}$ can be represented uniquely as a harmonic multilinear polynomial of degree at most $\min(k,n-k)$.
\end{lemma}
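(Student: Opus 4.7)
The plan is to match dimensions between harmonic multilinear polynomials of bounded degree and functions on the slice, and then use the $S_n$-action to deduce that evaluation on the slice is an isomorphism. Without loss of generality take $k \leq n/2$, so that $d := \min(k, n-k) = k$. Write $V_j$ for the space of homogeneous multilinear polynomials of degree $j$ in $x_1,\ldots,x_n$, let $D := \sum_{i=1}^n \partial/\partial x_i$, and set $H_j := V_j \cap \ker D$. The lemma asserts that evaluation on the slice gives a linear isomorphism $E : \bigoplus_{j=0}^{d} H_j \xrightarrow{\sim} \RR^{\binom{[n]}{k}}$.

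The main algebraic input is that $D : V_j \to V_{j-1}$ is surjective for every $1 \leq j \leq n/2$. I would prove this by $S_n$-representation theory: $V_j$ carries the permutation action on $\binom{[n]}{j}$, which by Young's rule decomposes multiplicity-freely as $\bigoplus_{i=0}^{j} S^{(n-i, i)}$, and similarly $V_{j-1} = \bigoplus_{i=0}^{j-1} S^{(n-i, i)}$; since $D$ is $S_n$-equivariant, Schur's lemma forces it to act by a scalar $c_i$ on each isotypic component, and a direct evaluation on a fixed nonzero vector inside each Specht module shows that $c_i \neq 0$ for $0 \leq i \leq j-1$. (Alternatively, one can write down an explicit right inverse $\Phi(x_T) = \alpha \sum_{i \notin T} x_{T \cup \{i\}} + \beta \sum_{i \notin T,\, \ell \in T} x_{(T \setminus \{\ell\}) \cup \{i\}}$ and solve the two-variable linear system $D \Phi = \mathrm{id}$.) It follows that $H_j \cong S^{(n-j, j)}$ as $S_n$-modules, so $\dim H_j = \binom{n}{j} - \binom{n}{j-1}$, and telescoping gives $\dim \bigoplus_{j=0}^{d} H_j = \binom{n}{d} = \binom{n}{k}$, matching the target dimension.

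For the isomorphism $E$, I would again use $S_n$-equivariance: the target $\RR^{\binom{[n]}{k}}$ decomposes as the multiplicity-free $S_n$-module $\bigoplus_{j=0}^{d} S^{(n-j, j)}$, the standard decomposition underlying the Johnson scheme, whose constituents match those of the source. By Schur's lemma, the restriction of $E$ to each $H_j$ is either zero or an isomorphism onto the $j$th isotypic component of the target, so it suffices to produce, for each $0 \leq j \leq d$, a single element of $H_j$ whose restriction to the slice does not vanish. This is most cleanly done using the explicit harmonic basis of Srinivasan~\cite{Srinivasan} (also used in~\cite{F}), which gives concrete elements of $H_j$ indexed by combinatorial data whose nonzero values on specific slice elements can be read off directly. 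The main obstacle is precisely this final nonvanishing step: the harmonic condition is what pins down a canonical complement to the ideal of polynomials vanishing on the slice (generated, inside the ring $\RR[x_1,\ldots,x_n]/(x_i^2 - x_i)$, by $x_1 + \cdots + x_n - k$), and verifying that each Specht constituent is genuinely realized by a nonvanishing harmonic representative requires an explicit construction rather than a purely dimensional argument.
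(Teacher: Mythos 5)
The paper does not prove this lemma at all: it is imported from Filmus~\cite{F} (and goes back to Dunkl), and the proof cited there is the explicit combinatorial one, constructing the orthogonal basis $\chi_B$ indexed by top sets, computing that its elements have nonzero norm on the slice, and counting top sets of size at most $\min(k,n-k)$ to get $\binom{n}{k}$. Your argument is a genuinely different (essentially Dunkl-style) representation-theoretic route: surjectivity of $D=\sum_i \partial/\partial x_i$ from $V_j$ to $V_{j-1}$ gives $H_j\cong S^{(n-j,j)}$ and the dimension count, multiplicity-freeness of both $\bigoplus_{j\le k}H_j$ and $\RR^{\binom{[n]}{k}}$ plus Schur's lemma reduces everything to exhibiting one harmonic homogeneous degree-$j$ polynomial not vanishing on the slice. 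This outline is correct, and what it buys is conceptual: you get the module structure ($H_j\cong S^{(n-j,j)}$) for free, whereas the cited proof buys an explicit orthogonal basis with computable norms, which is what the rest of the paper actually needs.

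Two points deserve attention. First, your parenthetical ``explicit right inverse'' is not well-formed: for $|T|=j-1$ the sets $(T\setminus\{\ell\})\cup\{i\}$ again have size $j-1$, so the second sum lies in $V_{j-1}$, not $V_j$, and then $D\Phi=\mathrm{id}$ is impossible unless $\beta=0$, in which case $D\Phi$ is $(n-j+1)\mathrm{id}$ plus the Johnson adjacency operator, not a multiple of the identity. Either drop this remark or replace the surjectivity argument by the standard one: $D$ is (up to positive weights) the adjoint of the up operator $U\colon V_{j-1}\to V_j$, which is injective for $j\le n/2$, so $D$ is surjective; this also spares you the verification that the Schur scalars $c_i$ are nonzero. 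Second, the final nonvanishing step, which you flag as the main obstacle, is easier than you suggest: the single product $\chi_{A,B}=\prod_{i=1}^{j}(x_{a_i}-x_{b_i})$ with $A,B$ disjoint is multilinear, homogeneous of degree $j$, harmonic (the contributions of $\partial/\partial x_{a_i}$ and $\partial/\partial x_{b_i}$ cancel in pairs), and equals $1$ at any slice point with $x_{a_i}=1$ and $x_{b_i}=0$ for all $i$; such a point exists precisely when $j\le\min(k,n-k)$, so no appeal to the full Srinivasan basis is needed. (Your reduction to $k\le n/2$ is fine, since $x\mapsto 1-x$ preserves multilinearity, harmonicity and degree.)
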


There is a non-canonical Fourier expansion defined for harmonic multilinear polynomials.

\begin{definition} \label{def:harmonic-expansion}
Let $A=(a_1,\ldots,a_d)$ and $B=(b_1,\ldots,b_d)$ be two sequences of some common length $d$ of distinct elements of $[n]$. We say that $A < B$ if:
\begin{enumerate}[(a)]
\item $A$ and $B$ are disjoint.
\item $B$ is monotone increasing: $b_1 < \cdots < b_d$.
\item $a_i < b_i$ for all $i \in [d]$.
\end{enumerate}
A sequence $B=(b_1,\ldots,b_d)$ is a \emph{top set} if $A < B$ for some sequence $A$. The collection of all top sets of length $d$ is denoted $\cB_{n,d}$, and the collection of all top sets is denoted $\cB_n$.

If $A=(a_1,\ldots,a_d)$ and $B=(b_1,\ldots,b_d)$ satisfy $A < B$, define
\[ \chi_{A,B} = \prod_{i=1}^d (x_{a_i} - x_{b_i}). \]
For a top set $B$, define
\[ \chi_B = \sum_{A < B} \chi_{A,B}. \]
Finally, define
\[ \chi_d = \chi_{\{2,4,\ldots,2d\}}. \]
\end{definition}

\begin{lemma}[{\cite[Theorem 3.1,Theorem 3.2]{F}}] \label{lem:harmonic-expansion}
Let $\pi$ be any exchangeable distribution on $\RR^n$ (that is, $\pi$ is invariant under permutation of the coordinates). The collection $\cB_n$ forms an orthogonal basis for all harmonic multilinear polynomials in $\RR[x_1,\ldots,x_n]$ (with respect to $\pi$), and
\[ \|\chi_B\|_\pi^2 = c_B \|\chi_{|B|}\|_\pi^2, \text{where } c_B = \prod_{i=1}^n \binom{b_i-2(i-1)}{2}, \]
and $\|\chi_B\|_\pi$ denotes the norm of $\chi_B$ with respect to $\pi$.

In particular, if $f$ is a harmonic multilinear polynomial then $\EE[f]$ is the same under all exchangeable measures.
\end{lemma}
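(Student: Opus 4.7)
The plan is to prove the lemma in four stages, following the algebraic strategy of \cite{F}: verify that each $\chi_B$ is harmonic and multilinear, prove orthogonality under any exchangeable $\pi$, compute the norm formula, and conclude spanning via dimension counting. The closing statement about $\EE[f]$ will then fall out immediately.

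Multilinearity of $\chi_B$ is immediate from the definition of $\chi_{A,B} = \prod_i(x_{a_i} - x_{b_i})$ as a product of affine terms in distinct variables, and harmonicity holds summand-by-summand since $\partial_{a_i}\chi_{A,B}$ and $\partial_{b_i}\chi_{A,B}$ are negatives of each other, making $\sum_j \partial_j \chi_{A,B} = 0$. The heart of the proof is orthogonality. The crucial observation is that $\chi_{A,B}$ is antisymmetric under the transposition swapping $x_{a_i}$ and $x_{b_i}$, so by exchangeability $\EE_\pi[\chi_{A,B}\, g] = 0$ whenever $g$ is symmetric under that swap. To show $\EE_\pi[\chi_B \chi_{B'}] = 0$ for distinct top sets $B \neq B'$, I would expand both factors as double sums over pairs $(A, A')$ with $A < B$ and $A' < B'$, and induct on $|B| + |B'|$: at each step, one compares the largest elements of $B$ and $B'$ and exploits the monotonicity defining top sets to identify a transposition under which one factor flips sign while the residual product is invariant, peeling off cancellations layer by layer.

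For the norm formula, exchangeability gives $\EE_\pi[\chi_{A,B}^2] = \|\chi_{|B|}\|_\pi^2$ for every valid $A < B$ (an appropriate coordinate permutation brings $(A,B)$ to the canonical configuration underlying $\chi_{|B|}$), so the identity $\|\chi_B\|_\pi^2 = \sum_{A,A' < B} \EE_\pi[\chi_{A,B}\chi_{A',B}]$ reduces the question to evaluating cross terms. These are computed by a similar antisymmetry-plus-counting argument, and summing all contributions yields the factor $c_B = \prod_i \binom{b_i - 2(i-1)}{2}$, whose $i$th factor reflects the combined constraint $a_i < b_i$ with $a_i$ disjoint from previously chosen elements. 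For spanning, $|\cB_{n,d}| = \binom{n}{d} - \binom{n}{d-1}$ matches the dimension of degree-$d$ harmonic multilinear polynomials, obtained from the surjectivity of $\sum_i \partial_i$ between consecutive multilinear degrees up to $n/2$; combined with the orthogonality just established, this forces $\cB_n$ to be a basis. The final statement is then immediate: $\chi_\emptyset = 1$, and orthogonality gives $\EE_\pi[\chi_B] = 0$ for $B \neq \emptyset$, so the purely algebraic expansion $f = \sum_B c_B \chi_B$ (whose coefficients are determined by $f$ alone, independent of $\pi$) yields $\EE_\pi[f] = c_\emptyset$, which does not depend on the exchangeable measure. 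The main obstacle I anticipate is the orthogonality step: antisymmetry instantly kills a single $\chi_{A,B}$, but handling the double sum requires careful combinatorial bookkeeping, since the relevant transposition depends on the specific pair $(A, A')$ and overlaps between $B$ and $B'$ complicate the inductive step.
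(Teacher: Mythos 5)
First, note that the paper itself does not prove this lemma at all: it is imported verbatim from~\cite{F} (Theorems 3.1 and 3.2 there), where the orthogonality and the evaluation of $\|\chi_B\|_\pi^2$ are established by a genuinely nontrivial combinatorial induction. So the only question is whether your sketch would stand on its own, and as written it has a real gap exactly where you anticipate one: the orthogonality step, and equally the cross-term computation behind the constant $c_B$, are asserted rather than proved, and the mechanism you propose does not suffice. Since expectations under every exchangeable measure include uniform measures on arbitrary $S_n$-orbits of points of $\RR^n$, the claim $\EE_\pi[\chi_B\chi_{B'}]=0$ for all exchangeable $\pi$ is equivalent to the polynomial identity $\sum_{\sigma\in S_n}(\chi_B\chi_{B'})^\sigma=0$; a single transposition that flips one factor and fixes the rest certifies this only in special cases. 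Concretely, take $B=\{3,4\}$ and $B'=\{4,5\}$: then $\chi_B=(x_1-x_3)(x_2-x_4)+(x_2-x_3)(x_1-x_4)$ is symmetric (not antisymmetric) under $(1\,2)$ and under $(3\,4)$ and is not $\pm$-preserved by any other transposition, while $\chi_{B'}=\sum_{a\neq a'\in\{1,2,3\}}(x_a-x_4)(x_{a'}-x_5)$ is likewise symmetric under the transpositions it tolerates; no single transposition makes the product antisymmetric. Orthogonality does hold (e.g.\ on the slice $\binom{[5]}{2}$ the contributions cancel, but the cancellation pairs the point $\{1,2\}$ with $\{3,4\}$, i.e.\ uses the double transposition $(1\,3)(2\,4)$), so the ``peel off one transposition per step'' induction you describe would need a substantially different and more global bookkeeping, which is precisely the content of the proof in~\cite{F}.

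The same issue recurs in your norm computation. The identification of the diagonal terms is fine, since for the canonical top set $\{2,4,\ldots,2d\}$ the only admissible $A$ is $(1,3,\ldots,2d-1)$, so $\chi_d=\prod_{i=1}^d(x_{2i-1}-x_{2i})$ and $\EE_\pi[\chi_{A,B}^2]=\|\chi_d\|_\pi^2$ by relabelling. But the off-diagonal terms $\EE_\pi[\chi_{A,B}\chi_{A',B}]$ are where $c_B$ comes from: already for $|B|=1$ one needs $\EE_\pi[(x_1-x_3)(x_2-x_3)]=\tfrac12\EE_\pi[(x_1-x_2)^2]$ for every exchangeable $\pi$, and for general $B$ the analogous identities and their aggregation into exactly $\prod_i\binom{b_i-2(i-1)}{2}$ are the theorem, not a routine counting afterthought. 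The remaining pieces of your plan are essentially fine: harmonicity and multilinearity of each $\chi_{A,B}$, the count $|\cB_{n,d}|=\binom{n}{d}-\binom{n}{d-1}$ matching the kernel dimension of the down operator (granting the standard surjectivity fact for $d\le n/2$, and noting that to get linear independence from orthogonality you should fix one nondegenerate exchangeable measure, say i.i.d.\ Gaussian, for which $\|\chi_B\|^2=c_B\,2^d>0$), and the deduction about $\EE_\pi[f]$ once the basis statement is in hand. But without an actual proof of the two displayed identities, the heart of the lemma remains unproved.
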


Lemma~\ref{lem:gamma-unconditioning} and Lemma~\ref{lem:harmonic-expansion} put together have the surprising consequence that harmonic multilinear functions have exactly the same distribution under $\cG_p$ and $\gamma_{p,q}$.

\begin{lemma} \label{lem:harmonic-gauss}
 Let $f$ be a harmonic multilinear polynomial. The random variables $f(\cG_p),\allowbreak f(\gamma_{p,q})$ are identically distributed.
\end{lemma}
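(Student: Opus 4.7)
The plan is to exploit the fact that harmonicity $\sum_{i=1}^n \partial f/\partial x_i = 0$ is precisely the statement that $f$ is invariant under translations in the all-ones direction $\mathbf{1} = (1,\ldots,1)$. Indeed, the sum of partial derivatives is (by the chain rule) the directional derivative of $f$ along $\mathbf{1}$, so if it vanishes identically as a polynomial, then $f(\mathbf{x} + t\mathbf{1}) = f(\mathbf{x})$ for every $\mathbf{x} \in \RR^n$ and every $t \in \RR$. A more explicit check is available via Lemma~\ref{lem:harmonic-expansion}: every harmonic multilinear polynomial is a linear combination of basis elements $\chi_B$, each assembled from the atomic differences $\chi_{A,B} = \prod_i (x_{a_i} - x_{b_i})$, which depend only on coordinate differences and are therefore manifestly $\mathbf{1}$-translation-invariant.

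With translation invariance in hand, the proof reduces to coupling the two Gaussian distributions, for which Lemma~\ref{lem:gamma-unconditioning} has already done the work. I would simply let $(X_1,\ldots,X_n) \sim \gamma_{p,q}$ and $Y \sim \Nor(p-q, p(1-p)/n)$ be independent; then that lemma yields $(X_1+Y,\ldots,X_n+Y) \sim \cG_p$. Since this coupled $\cG_p$-sample differs from the $\gamma_{p,q}$-sample exactly by the vector $Y\mathbf{1}$, translation invariance gives
\[
 f(X_1 + Y, \ldots, X_n + Y) = f(X_1,\ldots,X_n)
\]
as a pointwise identity, and reading off the marginal distributions of the two sides yields $f(\cG_p) \stackrel{d}{=} f(\gamma_{p,q})$.

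There is no real obstacle: the content of the lemma is the elementary observation that $\sum_i \partial_i$ is the directional derivative along $\mathbf{1}$, combined with the shift-representation of $\cG_p$ already recorded in Lemma~\ref{lem:gamma-unconditioning}. The only point worth a sentence of care is that the translation-invariance identity must hold \emph{pointwise}, not merely almost surely under some measure; this is automatic because the harmonic condition is stated as an identity of formal polynomials, and hence holds at every $\mathbf{x} \in \RR^n$.
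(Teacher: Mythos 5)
Your argument is correct and is essentially the paper's own: both proofs invoke Lemma~\ref{lem:gamma-unconditioning} to couple $\gamma_{p,q}$ and $\cG_p$ via a shared scalar shift along $\mathbf{1}$, and both conclude by observing that a harmonic multilinear polynomial is $\mathbf{1}$-translation-invariant (the paper phrases this as ``$f$ is a function of the differences $x_i - x_j$,'' which is the same fact). Your directional-derivative justification of the translation invariance is a slightly more direct route to that fact than reading it off the $\chi_B$ basis, but the substance is identical.
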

\begin{proof}
 According to Lemma~\ref{lem:gamma-unconditioning}, if $(x_1,\ldots,x_n) \sim \gamma_{p,q}$, $y \sim \Nor(q-p,\frac{p(1-p)}{n})$ and $y_i = x_i+y$, then $(y_1,\cdots,y_n) \sim \cG_p$. The lemma follows since $y_i-y_j = x_i-x_j$ and a harmonic multilinear polynomial can be expressed as a function of the differences $x_i-x_j$ for all $i,j$.
\end{proof}

Lemma~\ref{lem:harmonic-expansion} allows us to compare the norms of a harmonic multilinear function under various distributions.

\begin{corollary} \label{cor:norms}
Let $\pi_1,\pi_2$ be two exchangeable distributions on $\RR^n$, and let $f$ be a harmonic multilinear polynomial of degree $d$. If for some $\epsilon \geq 0$ and all $0 \leq e \leq d$ it holds that $(1-\epsilon) \|\chi_e\|_{\pi_1}^2 \leq \|\chi_e\|_{\pi_2}^2 \leq (1+\epsilon) \|\chi_e\|_{\pi_1}^2$, then also $(1-\epsilon) \|f\|_{\pi_1}^2 \leq \|f\|_{\pi_2}^2 \leq (1+\epsilon) \|f\|_{\pi_1}^2$.
\end{corollary}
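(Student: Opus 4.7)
The plan is to expand $f$ in the basis $\cB_n$ provided by Lemma~\ref{lem:harmonic-expansion} and then transfer the hypothesis term by term, exploiting the fact that the ratio $\|\chi_B\|_\pi^2 / \|\chi_{|B|}\|_\pi^2 = c_B$ is independent of $\pi$.

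First I would write $f = \sum_{B \in \cB_n, |B| \leq d} \hat{f}_B \, \chi_B$. Because $\cB_n$ is a vector-space basis for the harmonic multilinear polynomials of degree at most $\min(k,n-k)$, the coefficients $\hat{f}_B$ are determined by $f$ as a formal polynomial and are therefore the \emph{same} whether we analyze $f$ under $\pi_1$ or under $\pi_2$. Orthogonality of $\cB_n$ with respect to any exchangeable measure $\pi$, together with the identity $\|\chi_B\|_\pi^2 = c_B \|\chi_{|B|}\|_\pi^2$ from Lemma~\ref{lem:harmonic-expansion}, then gives
\[
\|f\|_\pi^2 \;=\; \sum_B \hat{f}_B^2 \, \|\chi_B\|_\pi^2 \;=\; \sum_B \hat{f}_B^2 c_B \, \|\chi_{|B|}\|_\pi^2 \;=\; \sum_{e=0}^{d} \alpha_e \, \|\chi_e\|_\pi^2,
\]
where $\alpha_e := \sum_{B \in \cB_{n,e}} \hat{f}_B^2 \, c_B \geq 0$ depends only on $f$, not on $\pi$.

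Next I would apply the hypothesis $(1-\epsilon)\|\chi_e\|_{\pi_1}^2 \leq \|\chi_e\|_{\pi_2}^2 \leq (1+\epsilon)\|\chi_e\|_{\pi_1}^2$ in each degree $0 \leq e \leq d$. Since the weights $\alpha_e$ are non-negative, the two-sided inequality is preserved under the non-negative linear combination:
\[
(1-\epsilon) \sum_{e=0}^{d} \alpha_e \|\chi_e\|_{\pi_1}^2 \;\leq\; \sum_{e=0}^{d} \alpha_e \|\chi_e\|_{\pi_2}^2 \;\leq\; (1+\epsilon) \sum_{e=0}^{d} \alpha_e \|\chi_e\|_{\pi_1}^2.
\]
Substituting the two expansions for $\|f\|_{\pi_j}^2$ yields exactly the desired bound $(1-\epsilon)\|f\|_{\pi_1}^2 \leq \|f\|_{\pi_2}^2 \leq (1+\epsilon)\|f\|_{\pi_1}^2$. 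There is no substantive obstacle; the only point to flag is the measure-independence of the $\alpha_e$, which is immediate because the expansion of $f$ in $\cB_n$ is a purely algebraic identity in $\RR[x_1,\ldots,x_n]$ and the constants $c_B$ given in Lemma~\ref{lem:harmonic-expansion} are combinatorial.
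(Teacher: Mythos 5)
Your proof is correct and is exactly the argument the paper intends: expand $f$ in the orthogonal basis $\cB_n$ of Lemma~\ref{lem:harmonic-expansion}, note that the coefficients (and the constants $c_B$) are measure-independent, and pass the two-sided inequality through the non-negative combination degree by degree. The paper treats the corollary as an immediate consequence of Lemma~\ref{lem:harmonic-expansion} and gives no further details, so there is nothing to add.
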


The following lemma records the norms of basis elements for the distributions considered in this paper.

\begin{lemma} \label{lem:harmonic-basis-norms}
For all $d$ we have
\begin{align*}
\|\chi_d\|_{\mu_p}^2 &= \|\chi_d\|_{\cG_p}^2 = (2p(1-p))^d, \\ 
\|\chi_d\|_{\nu_{pn}}^2 &= 2^d \frac{(pn)^{\underline{d}}((1-p)n)^{\underline{d}}}{n^{\underline{2d}}} = (2p(1-p))^d \left(1 \pm O\left(\frac{d^2}{p(1-p)n}\right) \right).
\end{align*}
\end{lemma}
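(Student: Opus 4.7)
The plan is to first simplify the expression for $\chi_d$. Take $B = (2,4,\ldots,2d)$, and note that a sequence $A=(a_1,\ldots,a_d)$ with $A < B$ must satisfy $a_i < 2i$, $a_i \notin B$, and the $a_i$'s distinct; inductively this forces $a_i = 2i-1$, so the sum defining $\chi_B$ collapses to a single term and $\chi_d = \prod_{i=1}^d (x_{2i-1} - x_{2i})$, a product over disjoint pairs of coordinates.

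For $\mu_p$ and $\cG_p$, the factors $(x_{2i-1}-x_{2i})$ depend on disjoint coordinates and are therefore mutually independent. In each case $x_i$ has variance $p(1-p)$, so $\EE[(x_{2i-1}-x_{2i})^2] = 2p(1-p)$, and independence gives $\|\chi_d\|^2 = (2p(1-p))^d$ for both measures.

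For $\nu_{pn}$, I would use the fact that on $\{0,1\}^n$ the quantity $(x_{2i-1}-x_{2i})^2$ is the indicator of the event $E_i = \{x_{2i-1} \neq x_{2i}\}$, so $\chi_d^2 = \charf{E_1 \cap \cdots \cap E_d}$. Counting $x \in \binom{[n]}{pn}$ in which each of the $d$ designated pairs contains exactly one $1$: each pair can be oriented in $2$ ways, and the remaining $n-2d$ coordinates must carry weight $pn-d$, giving $2^d \binom{n-2d}{pn-d}$ configurations. Dividing by $\binom{n}{pn}$ and rewriting binomial ratios as falling factorials yields exactly the stated formula $\|\chi_d\|_{\nu_{pn}}^2 = 2^d (pn)^{\underline{d}} ((1-p)n)^{\underline{d}} / n^{\underline{2d}}$.

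The only nontrivial step is the multiplicative approximation. I would write
\[
\frac{(pn)^{\underline{d}}((1-p)n)^{\underline{d}}}{n^{\underline{2d}}} = (p(1-p))^d \cdot \frac{\prod_{i=0}^{d-1}(1-\tfrac{i}{pn}) \prod_{i=0}^{d-1}(1-\tfrac{i}{(1-p)n})}{\prod_{i=0}^{2d-1}(1-\tfrac{i}{n})},
\]
then bound the numerator below by $1 - \sum_{i=0}^{d-1}\!\bigl(\tfrac{i}{pn} + \tfrac{i}{(1-p)n}\bigr) = 1 - O\!\bigl(\tfrac{d^2}{p(1-p)n}\bigr)$ and above by $1$, and the denominator between $1 - O(d^2/n)$ and $1$. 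Since $p(1-p) \leq 1/4$, the denominator error is absorbed by the numerator error, yielding a multiplicative factor $1 \pm O(d^2/(p(1-p)n))$. The main obstacle is purely bookkeeping: making sure the $p(1-p)$ in the denominator of the error term is correctly tracked (and that $d^2/(p(1-p)n)$ is assumed small enough that these first-order bounds are valid), after which the rest is routine.
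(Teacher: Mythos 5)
Your proof is correct, but it is more self-contained than the paper's. The paper's proof simply cites \cite[Theorem 4.1]{F} for the exact formulas for $\|\chi_d\|_{\mu_p}^2$ and $\|\chi_d\|_{\nu_{pn}}^2$, adds the one-line independence computation $\|\chi_d\|_{\cG_p}^2 = \EE[(x_1-x_2)^2]^d$, and then performs essentially the same asymptotic manipulation you do at the end. You instead rederive the exact formulas from scratch: your observation that for $B=(2,4,\ldots,2d)$ the only admissible sequence is $A=(1,3,\ldots,2d-1)$, so that $\chi_d = \prod_{i=1}^d (x_{2i-1}-x_{2i})$, is correct (and is implicitly what the paper uses in the Gaussian case; it is also consistent with $c_B = \prod_i \binom{b_i-2(i-1)}{2} = 1$ in Lemma~\ref{lem:harmonic-expansion}), and your counting argument $\|\chi_d\|_{\nu_{pn}}^2 = 2^d\binom{n-2d}{pn-d}/\binom{n}{pn}$ correctly reproduces the cited formula, including the degenerate cases where the falling factorials vanish. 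The final estimate via the Weierstrass inequality $\prod(1-u_i)\ge 1-\sum u_i$, with the denominator error absorbed using $p(1-p)\le 1/4$, matches the paper's computation; note that both your argument and the paper's implicitly work in the regime where $d^2/(p(1-p)n)$ is bounded (otherwise the stated bound is vacuous or trivial), a caveat you flag explicitly. What your route buys is independence from the external reference; what the paper's buys is brevity, since the exact norms are already established in \cite{F}.
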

\begin{proof}
The exact formulas for $\|\chi_d\|_{\mu_p}^2$ and $\|\chi_d\|_{\nu_{pn}}^2$ are taken from~\cite[Theorem 4.1]{F}. Since $x_1,\ldots,x_n$ are independent under $\cG_p$, we have $\|\chi_d\|_{\cG_p}^2 = \EE[(x_1-x_2)^2]^d = (2p(1-p))^d$.

It remains to prove the estimate for $\|\chi_d\|_{\nu_{pn}}^2$. The proof of~\cite[Theorem 4.1]{F} shows that
\[
\|\chi_d\|_{\nu_{pn}}^2 = 2^d \frac{(pn)^{\underline{d}}((1-p)n)^{\underline{d}}}{n^{\underline{2d}}} = (2p(1-p))^d \frac{\left(1-\frac{O(d^2)}{pn}\right) \left(1-\frac{O(d^2)}{(1-p)n}\right)} {\left(1-\frac{O(d^2)}{n}\right)}.
\]
It follows that
\[
\frac{\|\chi_d\|_{\nu_{pn}}^2}{(2p(1-p))^d} = 1 \pm O\left(\frac{d^2}{pn} + \frac{d^2}{(1-p)n} + \frac{d^2}{n}\right) = 1 \pm O\left(\frac{d^2}{p(1-p)n}\right). \qedhere
\]
\end{proof}

Lemma~\ref{lem:harmonic-basis-norms} and Corollary~\ref{cor:norms} imply an L2 invariance principle for low degree harmonic multilinear polynomials.

\begin{corollary} \label{cor:l2-invariance-harmonic}
 Suppose $f$ is a harmonic multilinear polynomial of degree $d$ on $n$ variables. For any $p \leq 1/2$ such that $d \leq pn$ and any $\pi \in \{\mu_p,\cG_p\}$ we have
\[
 \|f\|_{\nu_{pn}} = \|f\|_\pi \left(1 \pm O\left(\frac{d^2}{p(1-p)n}\right)\right).
\]
\end{corollary}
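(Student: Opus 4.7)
The plan is to combine Lemma~\ref{lem:harmonic-basis-norms} with Corollary~\ref{cor:norms}. Lemma~\ref{lem:harmonic-basis-norms} directly compares, for every $e$, the squared norms of the basis polynomial $\chi_e$ under $\mu_p$, $\cG_p$ and $\nu_{pn}$: the first two are identically $(2p(1-p))^e$, while $\|\chi_e\|_{\nu_{pn}}^2$ differs from this common value by a multiplicative factor $1 \pm O(e^2 / (p(1-p)n))$. Since we only care about $e \leq d$, all of these multiplicative errors are uniformly controlled by the single quantity $\epsilon := O(d^2 / (p(1-p)n))$, which is exactly the shape of error that the corollary claims.

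Next I would invoke Corollary~\ref{cor:norms} with $\pi_1 = \pi \in \{\mu_p, \cG_p\}$ and $\pi_2 = \nu_{pn}$; these are manifestly exchangeable distributions on $\RR^n$, so the hypothesis of the corollary is met by the estimate in the preceding paragraph. The conclusion is that
\[
(1 - \epsilon)\|f\|_\pi^2 \;\leq\; \|f\|_{\nu_{pn}}^2 \;\leq\; (1 + \epsilon)\|f\|_\pi^2
\]
for every harmonic multilinear polynomial $f$ of degree at most $d$. Taking square roots, $\sqrt{1 \pm \epsilon} = 1 \pm O(\epsilon)$ as long as $\epsilon$ is bounded away from $1$, which gives the desired $\|f\|_{\nu_{pn}} = \|f\|_\pi (1 \pm O(d^2/(p(1-p)n)))$. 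If $\epsilon$ is not small, then $d^2/(p(1-p)n) = \Omega(1)$ and the claimed estimate is vacuous, since any bounded multiplicative discrepancy is absorbed by the implicit constant in $O(\cdot)$.

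There is essentially no obstacle here; the work has been offloaded onto Lemma~\ref{lem:harmonic-basis-norms} (which contains the one genuine computation, namely the estimate $n^{\underline{2d}}/n^{2d} = 1 - O(d^2/n)$ and its siblings) and Corollary~\ref{cor:norms} (which does the bookkeeping of summing term-by-term over the orthogonal basis $\cB_n$). The only minor care-point is the square-root step, but this is a standard one-line manipulation. Consequently the proof should be a short two-sentence invocation of the two preceding results.
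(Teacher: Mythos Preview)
Your proposal is correct and matches the paper's approach exactly: the paper states this corollary immediately after Lemma~\ref{lem:harmonic-basis-norms} with no explicit proof, simply noting that it follows from combining that lemma with Corollary~\ref{cor:norms}. Your write-up fills in precisely the intended two-step argument (uniform control of $\|\chi_e\|^2$ for $e \le d$, then the term-by-term transfer via the orthogonal basis), including the square-root step, which the paper leaves implicit.
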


\subsection{Analysis of functions} \label{sec:analysis}

We consider functions on three different kinds of domains: the Boolean cube $\{0,1\}^n$, the slice $\binom{[n]}{k}$, and Gaussian space $\RR^n$. We can view a multilinear polynomial in $\RR[x_1,\ldots,x_n]$ as a function over each of these domains in the natural way.

For each of these domains, we proceed to define certain notions and state some basic results. The material for the Boolean cube and Gaussian space is standard, and can be found for example in~\cite{Ryan}.

\paragraph*{Functions on the Boolean cube} The Boolean cube is analyzed using the measure $\mu_p$ for an appropriate $p$. The Fourier characters $\omega_S$ and Fourier expansion of a function $f\colon\{0,1\}^n\to\RR$ are given by
\[ \omega_S(x_1,\ldots,x_n) = \prod_{i\in S} \frac{x_i-p}{\sqrt{p(1-p)}}, \quad f = \sum_{S\subseteq[n]} \hat{f}(S) \omega_S. \]
We define $f^{=k} = \sum_{|S|=k} \hat{f}(S) \omega_S$, and so a multilinear polynomial $f$ of degree $d$ can be decomposed as $f = f^{=0} + \cdots + f^{=d}$. Since the Fourier characters are orthogonal, the parts $f^{=0},\ldots,f^{=d}$ are orthogonal.
In the future it will be convenient to separate $f$ into $f = f^{\leq k} + f^{>k}$ for an appropriate $k$, where $f^{\leq k} = f^{=0} + \cdots + f^{=k}$ and $f^{>k} = f^{=k+1} + \cdots + f^{=d}$.

Define $f^{[i]}(x) = f(x^{[i]})$, where $x^{[i]}$ results from flipping the $i$th coordinate of $x$.
The $i$th cube-influence is given by
\[
 \Infc_i[f] = \|f-f^{[i]}\|^2 = \left\|\frac{\partial f}{\partial x_i}\right\|^2 = \frac{1}{p(1-p)} \sum_{S\ni i} \hat{f}(S)^2.
\]
 The total influence of $f$ is $\Infc[f] = \sum_{i=1}^n \Infc_i[f]$, and it satisfies the Poincar\'e inequality
\[ \VV[f] \leq p(1-p) \Infc[f] \leq (\deg f) \VV[f]. \]
The noise operator $T_\rho$ is defined by
\[ T_\rho f = \sum_{i=0}^{\deg f} \rho^i f^{=i}. \]
The noise stability of $f$ at $\rho$ is
\[ \stabilityc_\rho[f] = \langle f,T_\rho f \rangle = \sum_{i=0}^{\deg f} \rho^i \|f^{=i}\|^2. \]

The noise operator (and so noise stability) can also be defined non-spectrally. We have $(T_\rho f)(x) = \EE[f(y)]$, where $y$ is obtained from $x$ by letting $y_i = x_i$ with probability $\rho$, and $y_i \sim \mu_p$ otherwise.

\paragraph*{Functions on the slice} The slice $\binom{[n]}{k}$ is analyzed using the measure $\nu_k$. The corresponding notion of Fourier expansion was described in Section~\ref{sec:harmonic}. A harmonic multilinear polynomial $f$ of degree $d$ can be decomposed as $f = f^{=0} + \cdots + f^{=d}$, where $f^{=k}$ contains the homogeneous degree $k$ part. The parts $f^{=0},\ldots,f^{=d}$ are orthogonal.

The $(i,j)$th influence of a function $f$ is $\Infs_{ij}[f] = \EE[(f-f^{(ij)})^2]$, where $f^{(ij)}(x) = f(x^{(ij)})$, and $x^{(ij)}$ is obtained from $x$ by swapping the $i$th and $j$th coordinates. We define the $i$th influence by $\Infs_i[f] = \frac{1}{n} \sum_{j=1}^n \Infs_{ij}[f]$, and the total influence by $\Infs[f] = \sum_{i=1}^n \Infs_i[f]$. The total influence satisfies the Poincar\'e inequality
\[ \VV[f] \leq \Infs[f] \leq (\deg f) \VV[f]. \]
For a proof, see for example~\cite[Lemma 5.6]{F}.

The noise operator $H_\rho$ is defined by
\[ H_\rho f = \sum_{d=0}^{\deg f} \rho^{d(1-(d-1)/n)} f^{=d}. \]
The noise stability of $f$ at $\rho$ is
\[ \stabilitys_\rho[f] = \langle f,H_\rho f \rangle = \sum_{d=0}^{\deg f} \rho^{d(1-(d-1)/n)} \|f^{=d}\|^2. \]

The noise operator (and so noise stability) can also be defined non-spectrally. We have $(H_\rho f)(x) = \EE[f(y)]$, where $y$ is obtained from $x$ by taking $\Po(\frac{n-1}{2}\log\frac{1}{\rho})$ random transpositions.

\paragraph*{Functions on Gaussian space} Gaussian space is $\RR^n$ under a measure $\cG_p$ for an appropriate $p$. In this paper, we mostly consider functions on $\RR^n$ given by multilinear polynomials, and these can be expanded in terms of the $\omega_S$. General functions can be expanded in terms of Hermite functions. Every square-integrable function can be written as $f = \sum_{k \geq 0} f^{=k}$, where $f^{=k}$ satisfies $f^{=k}(\alpha x_1+p,\ldots,\alpha x_n+p) = \alpha^k f^{=k}(x_1+p,\ldots,x_n+p)$.

The distributions $\mu_p$ and $\cG_p$ have the same first two moments, and this implies that $\EE_{\mu_p}[f] = \EE_{\cG_p}[f]$ and $\|f\|_{\mu_p} = \|f\|_{\cG_p}$ for every multilinear polynomial $f$. The Ornstein--Uhlenbeck operator $U_\rho$ is defined just like $T_\rho$ is defined for the cube. Noise stability is defined just like in the case of the cube, and we use the same notation $\stabilityc$ for it.

The noise operator (and so noise stability) can also be defined non-spectrally. We have $(U_\rho f)(x) = \EE[f(y)]$, where $y = (1-\rho) p + \rho x + \sqrt{1-\rho^2}\Nor(0,p(1-p))$. We can also define noise stability as $\stabilityc_\rho[f] = \EE[f(x) f(y)]$, where $(x,y) \sim \Nor\bigl((p,p),\begin{pmatrix} p(1-p) & \rho p(1-p) \\ \rho p(1-p) & p(1-p) \end{pmatrix}\bigr)$.

\paragraph*{Homogeneous parts} For a function $f$, we have defined $f^{=k}$ in three different ways, depending on the domain. When $f$ is a harmonic multilinear polynomial, all three definitions coincide. Indeed, any harmonic multilinear polynomial is a linear combination of functions of the form $\chi_{A,B}$. We show that $\chi_{A,B} = \chi_{A,B}^{=|B|}$ under all three definitions. Let $A = a_1,\ldots,a_k$ and $B = b_1,\ldots,b_k$. Since $\chi_{A,B}$ is homogeneous of degree $k$ as a polynomial, we see that $\chi_{A,B} = \chi_{A,B}^{=k}$ over the slice. Also,
\[
 \chi_{A,B} = (p(1-p))^{k/2} \prod_{i=1}^k \left(\frac{x_{a_i} - p}{\sqrt{p(1-p)}} - \frac{x_{b_i} - p}{\sqrt{p(1-p)}}\right).
\]
Opening the product into a sum of terms, we can identify each term with a basis function $\omega_S$ for some $S$ of size $k$. This shows that $\chi_{A,B} = \chi_{A,B}^{=k}$ over the cube. Finally, since $\chi_{A,B}$ is harmonic, in order to show that $\chi_{A,B} = \chi_{A,B}^{=k}$ in Gaussian space, it suffices to show that $\chi_{A,B}(\alpha x) = \alpha^k \chi_{A,B}(x)$, which is true since $\chi_{A,B}$ is homogeneous of degree $k$ as a polynomial.

\paragraph*{Degrees} The following results state several ways in which degree for functions on the slice behaves as expected.

First, we show that degree is subadditive.

\begin{lemma} \label{lem:slice-degree}
 Let $f,g$ be harmonic multilinear polynomials, and let $h$ be the unique harmonic multilinear polynomial agreeing with $fg$ on the slice $\binom{[n]}{k}$. Then $\deg h \leq \deg f + \deg g$.
\end{lemma}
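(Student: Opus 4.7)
The plan has two steps: first reduce $fg$ to a multilinear polynomial with the same degree bound, and second show that the harmonic projection preserves this degree.

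For the first step, every point of $\binom{[n]}{k}$ lies in $\{0,1\}^n$, so the relations $x_i^2 = x_i$ hold pointwise on the slice. Reducing $fg$ (a polynomial of total degree at most $\deg f + \deg g$) modulo the ideal $(x_1^2 - x_1, \ldots, x_n^2 - x_n)$ yields a multilinear polynomial $m$ of degree at most $\deg f + \deg g$ that agrees with $fg$ on all of $\{0,1\}^n$, and in particular on $\binom{[n]}{k}$. Since $h$ is determined by its values on the slice (Lemma~\ref{lem:slice-harmonic}), it suffices to prove the following general claim: for any multilinear $m \in \RR[x_1,\ldots,x_n]$ of degree at most $D$, the unique harmonic multilinear polynomial $\tilde m$ agreeing with $m$ on $\binom{[n]}{k}$ satisfies $\deg \tilde m \leq D$.

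For the second step I would proceed by dimension counting. Let $V_D$ denote the space of multilinear polynomials of degree $\leq D$ and $H_D$ its subspace of harmonic ones; then $\dim V_D = \sum_{d \leq D}\binom{n}{d}$, while Lemma~\ref{lem:harmonic-expansion} together with the standard enumeration of top sets ($|\cB_{n,d}| = \binom{n}{d} - \binom{n}{d-1}$ for $d \leq n/2$) and telescoping gives $\dim H_D = \binom{n}{D}$ (the case $D > \min(k, n-k)$ is trivial). The image of the restriction map $V_D \to \RR^{\binom{[n]}{k}}$ trivially contains the restriction of $H_D$, which remains $\binom{n}{D}$-dimensional by Lemma~\ref{lem:slice-harmonic}. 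Hence the claim reduces to producing a kernel of dimension exactly $\sum_{d < D}\binom{n}{d}$.

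The main obstacle is producing this kernel. My candidate is the image of the multilinearized multiplication map $q \mapsto \pi\bigl((x_1 + \cdots + x_n - k) \cdot q\bigr)$ on $q \in V_{D-1}$, where $\pi$ denotes reduction modulo $(x_i^2 - x_i)_{i \in [n]}$; the image visibly vanishes on the slice and lies in $V_D$, and injectivity can be checked by evaluating at carefully chosen test points or by a direct calculation on the monomial basis. A perhaps cleaner alternative is representation-theoretic: by Young's rule, the span of degree-$\leq D$ monomials restricted to the slice decomposes under the $S_n$-action as $\bigoplus_{d \leq D} V_{(n-d, d)}$, where $V_{(n-d,d)}$ is the two-row Specht module, and Dunkl's construction underlying Lemma~\ref{lem:harmonic-expansion} identifies $H_D$ with the same decomposition, forcing the two subspaces to coincide as subspaces of functions on the slice. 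Either route establishes the degree-preservation claim and completes the proof.
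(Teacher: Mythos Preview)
Your proposal is correct but takes a genuinely different route from the paper. The paper proves the lemma by invoking heavy association-scheme machinery: it cites Qiu and Zhan to place $fg$ in the range of the Hadamard product $E_{\leq \deg f}\circ E_{\leq \deg g}$ of truncated primitive idempotents, and then appeals to the Q-polynomial (cometric) property of the Johnson scheme to identify that range with $E_{\leq \deg f+\deg g}$. In the paper's logic, Corollary~\ref{cor:slice-degree} (harmonic projection does not raise degree) is then \emph{derived} from this lemma. You reverse the dependency: you prove Corollary~\ref{cor:slice-degree} directly, and the lemma follows immediately from the multilinear reduction step. Your dimension count is sound --- the injectivity of $q\mapsto\pi((\sum_i x_i-k)q)$ on $V_{D-1}$ comes down to the injectivity of the ``up'' map $\binom{[n]}{d}\to\binom{[n]}{d+1}$ in the Boolean lattice for $d<n/2$, a standard fact --- and the representation-theoretic alternative via Young's rule is equally valid (indeed, on the slice the span of degree-$D$ monomials already contains all lower-degree ones since $(k-|S|)\prod_{i\in S}x_i=\sum_{j\notin S}\prod_{i\in S\cup\{j\}}x_i$, so the span is exactly $\bigoplus_{d\leq D}V_{(n-d,d)}$, matching $H_D$). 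Your argument is more elementary and self-contained; the paper's is shorter given the citations and transfers verbatim to any Q-polynomial scheme.
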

\begin{proof}
 We can assume that $\deg f + \deg g \leq k$, since otherwise the result is trivial.

 Let $E_i$ be the operator mapping a function $\phi$ on the slice to the function $\phi^{=i}$ on the slice. That is, we take the harmonic multilinear representation of $\phi$, extract the $i$'th homogeneous part, and interpret the result as a function on the slice. Also, let $E_{\leq d} = \sum_{i=0}^d E_i$. A function $\phi$ on the slice has degree at most $d$ if and only if it is in the range of $E_{\leq d}$.

 Qiu and Zhan~\cite{QiuZhan} (see also Tanaka~\cite{Tanaka}) show that $fg$ is in the range of $E_{\leq \deg f} \circ E_{\leq \deg g}$, where $\circ$ is the Hadamard product. The operators $E_i$ are the primitive idempotents of the Johnson association scheme (see, for example,~\cite[\S3.2]{BannaiIto}). Since the Johnson association scheme is Q-polynomial (cometric), the range of $E_{\leq \deg f} \circ E_{\leq \deg g}$ equals the range of $E_{\leq \deg f + \deg g}$, and so $\deg fg \leq \deg f + \deg g$.
\end{proof}

As a corollary, we show that ``harmonic projection'' doesn't increase the degree.

\begin{corollary} \label{cor:slice-degree}
 Let $f$ be a multilinear polynomial, and let $g$ be the unique harmonic multilinear polynomial agreeing with $f$ on the slice $\binom{[n]}{k}$. Then $\deg g \leq \deg f$.
\end{corollary}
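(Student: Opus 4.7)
The plan is to combine the linearity of harmonic projection with the product degree bound from Lemma~\ref{lem:slice-degree}, reducing the general statement to a base case on single variables. Let $H$ denote the operator sending a function on $\binom{[n]}{k}$ to its unique harmonic multilinear representative; this operator is plainly \emph{linear}. Expanding $f$ in the monomial basis as $f = \sum_{|S| \leq \deg f} c_S x_S$ with $x_S := \prod_{i \in S} x_i$, it therefore suffices to show that $\deg H(x_S) \leq |S|$ for every $S \subseteq [n]$ (and for $S = \emptyset$ we have $x_\emptyset = 1$, which is already harmonic of degree $0$).

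For the base case $|S|=1$, I would verify the explicit identity $x_i = \frac{k}{n} + \frac{1}{n}\sum_{j \neq i}(x_i - x_j)$ on the slice: the right-hand side simplifies to $\frac{k}{n} + x_i - \frac{1}{n}\sum_j x_j$, which equals $x_i$ whenever $\sum_j x_j = k$. That right-hand side has degree $1$ as a polynomial and is harmonic, because each difference $x_i - x_j$ is annihilated by $\sum_\ell \partial/\partial x_\ell$ and constants are too. By uniqueness of the harmonic representation (Lemma~\ref{lem:slice-harmonic}), this expression is precisely $H(x_i)$, so $\deg H(x_i) \leq 1$.

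For the inductive step $|S| = m \geq 2$, write $S = \{i_1, \ldots, i_m\}$ and set $\tilde{x}_{i_j} := H(x_{i_j})$. Since $\tilde{x}_{i_j}$ agrees with $x_{i_j}$ on $\binom{[n]}{k}$, the products $\prod_{j=1}^m \tilde{x}_{i_j}$ and $x_S$ also agree on the slice, and hence $H(x_S)$ equals the harmonic representative of a product of $m$ harmonic polynomials each of degree at most $1$. Applying Lemma~\ref{lem:slice-degree} iteratively --- at each step pairing the accumulated harmonic projection (of degree bounded by the running count) with the next factor $\tilde{x}_{i_j}$, then replacing the result by its harmonic projection --- yields $\deg H(x_S) \leq m$, which is the desired bound. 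The only point I see requiring care is that Lemma~\ref{lem:slice-degree} demands both factors to be harmonic, which forces the induction to be carried out on the $\tilde{x}_{i_j}$ rather than on the raw variables $x_{i_j}$; the explicit base-case identity is exactly what enables this reduction.
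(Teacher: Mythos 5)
Your proposal is correct and follows essentially the same route as the paper: the explicit harmonic representative $\frac{k}{n} + \frac{1}{n}\sum_{j\neq i}(x_i - x_j)$ of a single variable (identical to the paper's formula for $x_1$), followed by Lemma~\ref{lem:slice-degree} for products and linearity of the harmonic projection to handle general multilinear polynomials. The only difference is that you spell out the monomial-by-monomial induction that the paper leaves implicit.
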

\begin{proof}
 When $f = x_1$, one checks that $g$ is given by the linear polynomial
\[
 g = \frac{1}{n} \sum_{i=1}^n (x_1 - x_i) + \frac{k}{n}.
\]
 The corollary now follows from Lemma~\ref{lem:slice-degree} and from the easy observation $\deg (\alpha F + \beta G) \leq \max(\deg F, \deg G)$.
\end{proof}

An immediate corollary is that degree is substitution-monotone.

\begin{corollary} \label{cor:slice-subst}
 Let $f$ be a harmonic multilinear polynomial, let $g(x_1,\ldots,x_n) = \linebreak f(x_1,\ldots,x_{n-1},b)$ for $b \in \{0,1\}$, and let $h$ be the unique harmonic multilinear polynomial agreeing with $g$ on the slice $\binom{[n]}{k}$. Then $\deg h \leq \deg f$.
\end{corollary}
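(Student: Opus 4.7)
The plan is simply to reduce to Corollary~\ref{cor:slice-degree}, which was designed exactly for this purpose. The point is that the statement to prove is about the harmonic projection of a particular multilinear polynomial, namely the one obtained from $f$ by substitution, so once we check that this polynomial has degree at most $\deg f$, the previous corollary finishes the job.

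First I would note that $g$, viewed as a polynomial in $x_1,\ldots,x_n$ (it simply does not depend on $x_n$), is multilinear of degree at most $\deg f$. This is immediate from the multilinearity of $f$: each monomial of $f$ either omits $x_n$, in which case substitution leaves it unchanged, or contains $x_n$, in which case substituting $b\in\{0,1\}$ either kills the monomial or strips the factor $x_n$ and multiplies by $1$; in all cases the resulting monomial has degree at most that of the original, so $\deg g \leq \deg f$.

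Next I would apply Corollary~\ref{cor:slice-degree} to the multilinear polynomial $g$: the unique harmonic multilinear polynomial agreeing with $g$ on $\binom{[n]}{k}$, which is precisely $h$ by definition, satisfies $\deg h \leq \deg g$. Combining this with $\deg g \leq \deg f$ from the previous step gives $\deg h \leq \deg f$, as required.

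There is no real obstacle here; the work was already done in Lemma~\ref{lem:slice-degree} and Corollary~\ref{cor:slice-degree}, and what remains is the trivial observation that substituting a constant for one variable in a multilinear polynomial does not increase the degree. The only thing to be slightly careful about is that $g$ should be regarded as a polynomial in all $n$ variables (not $n-1$) so that the hypothesis of Corollary~\ref{cor:slice-degree}, which implicitly refers to the ambient slice $\binom{[n]}{k}$, applies verbatim.
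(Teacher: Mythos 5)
Your proof is correct and is exactly the route the paper intends: it presents Corollary~\ref{cor:slice-subst} as an immediate consequence of Corollary~\ref{cor:slice-degree}, with the only observation needed being that substituting $b \in \{0,1\}$ for $x_n$ in a multilinear polynomial keeps it multilinear of degree at most $\deg f$, which you verify. Your remark about viewing $g$ as a polynomial in all $n$ variables so that Corollary~\ref{cor:slice-degree} applies verbatim is the right precaution and matches the paper's setup.
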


\paragraph*{Noise operators} We have considered two noise operators, $H_\rho$ and $T_\rho = U_\rho$. Both can be applied syntactically on all multilinear polynomials. The following result shows that both operators behave the same from the point of view of Lipschitz functions.

\begin{lemma} \label{lem:noise-operators}
 Let $f$ be a multilinear polynomial of degree at most $n/2$. For $\delta < 1/2$ and any $C$-Lipschitz functional $\psi$, and with respect to any exchangeable measure,
\[
 |\EE[\psi(H_{1-\delta}f)] - \EE[\psi(U_{1-\delta}f)]| = O\left(\frac{C\delta^{-2}}{n}\|f\|\right).
\]
\end{lemma}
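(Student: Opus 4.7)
By Lemma~\ref{lem:lipschitz-cs}, it suffices to bound $\|H_{1-\delta} f - U_{1-\delta} f\|_\pi$. Decomposing $f = \sum_{d=0}^{D} f^{=d}$ into its degree-$d$ homogeneous parts, where $D = \deg f \leq n/2$, the two noise operators act diagonally on this decomposition and
\[
 H_{1-\delta} f - U_{1-\delta} f = \sum_{d=0}^{D} c_d\, f^{=d}, \qquad c_d = (1-\delta)^{d - d(d-1)/n} - (1-\delta)^d.
\]

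The first step is a coefficient estimate. Factor
$|c_d| = (1-\delta)^{d - d(d-1)/n}\bigl(1 - (1-\delta)^{d(d-1)/n}\bigr)$;
since $d \leq n/2$ gives $d - d(d-1)/n \geq d/2$, and the elementary bound $1 - (1-\delta)^a \leq \min(a\delta,\,1)$ holds for $a \geq 0$ and $\delta \in (0, 1/2)$, one obtains
\[
 |c_d| \leq (1-\delta)^{d/2}\, \min\!\bigl(d^2\delta/n,\ 1\bigr).
\]
A routine calculus optimization yields $\sup_d |c_d| = O\bigl(1/(\delta n)\bigr)$, with the maximum attained near $d \asymp 1/\delta$; for larger $d$ the exponential decay $(1-\delta)^{d/2}$ dominates.

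The second step is to relate $\|\sum_d c_d f^{=d}\|_\pi$ to $\|f\|_\pi$, and here the exchangeability of $\pi$ plays the crucial role. The key input is Lemma~\ref{lem:harmonic-expansion}: for a harmonic multilinear polynomial the parts $f^{=d}$ are pairwise orthogonal under \emph{any} exchangeable measure, so
\[
 \Bigl\|\sum_d c_d f^{=d}\Bigr\|_\pi^2 = \sum_d c_d^2\, \|f^{=d}\|_\pi^2 \leq \sup_d c_d^2 \cdot \|f\|_\pi^2.
\]
Combining with the coefficient estimate and Lemma~\ref{lem:lipschitz-cs} yields $|\EE[\psi(H_{1-\delta}f)] - \EE[\psi(U_{1-\delta}f)]| = O\bigl(C/(\delta n)\bigr)\|f\|_\pi$, which is even stronger than the claimed $O(C\delta^{-2}/n)\|f\|$. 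The main obstacle is extending past harmonic $f$ to arbitrary multilinear polynomials: the parts $f^{=d}$ are no longer orthogonal under $\pi$ and can exhibit substantial cancellation. I would handle this by splitting $f = \tilde f + r$, where $\tilde f$ is the harmonic projection and $r$ lies in the ideal generated by $\sum_i (x_i - \EE_\pi[x_1])$, applying the orthogonality argument above to $\tilde f$, and exploiting the explicit form of $r$ (on which both $H_{1-\delta}$ and $U_{1-\delta}$ act in a controlled way) to bound its contribution; the $\delta^{-2}$ slack in the stated bound is exactly the room needed to absorb this correction.
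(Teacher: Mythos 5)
Your core argument is the same as the paper's: reduce to bounding $\|H_{1-\delta}f - U_{1-\delta}f\|$ via Lemma~\ref{lem:lipschitz-cs}, estimate the per-degree coefficient $c_d = (1-\delta)^{d-d(d-1)/n}-(1-\delta)^d$, and finish by Parseval. Your coefficient estimate is correct and in fact sharper than the paper's: the paper bounds $c_d \leq 2\delta\frac{d(d-1)}{n}(1-\delta)^{d/2}$ by the mean value theorem and then uses $d(d-1)\rho^{d/2}\leq 2\rho/(1-\sqrt{\rho})^3$ to land on $32\delta^{-2}/n$, whereas your optimization gives $\sup_d c_d = O(1/(\delta n))$. (Small slip: $1-(1-\delta)^a\leq a\delta$ can fail for $0<a<1$, but $1-(1-\delta)^a\leq a\delta/(1-\delta)\leq 2a\delta$ always holds, so the big-$O$ conclusion is unaffected.)

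The divergence, and the gap, is in your last paragraph. The paper does not split $f$ into a harmonic projection plus a remainder; its proof is exactly your ``harmonic case'' argument: it writes $\|H_\rho f - U_\rho f\|^2=\sum_d c_d^2\|f^{=d}\|^2$, i.e.\ it uses orthogonality of the parts $f^{=d}$ under the exchangeable measure (Lemma~\ref{lem:harmonic-expansion}), and indeed in every place Lemma~\ref{lem:noise-operators} is invoked (Corollary~\ref{cor:invariance-noise}, Theorem~\ref{thm:majority}) the operators are applied to harmonic polynomials, for which the three notions of $f^{=d}$ coincide and orthogonality holds. Your proposed repair for general multilinear $f$ is only a plan, and as described it does not go through: the harmonic projection $\tilde f$ is tied to one particular slice, while the statement quantifies over all exchangeable measures; you give no estimate of the remainder's contribution, and nothing in the sketch shows $\|(H_{1-\delta}-U_{1-\delta})r\| = O(\delta^{-2}\|f\|/n)$ --- the cancellation you yourself point out means $\|f\|_\pi$ can be much smaller than the norms of the individual homogeneous parts, so the ``$\delta^{-2}$ slack absorbs it'' claim is unsupported rather than a quantitative argument. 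So either restrict the statement to harmonic $f$ (which is all the paper uses, and for which your argument is complete and matches the paper's, with a better constant), or actually supply the bound on the non-harmonic component; as written, the final step is an unproven assertion.
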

\begin{proof}
 Let $\rho = 1-\delta$. Lemma~\ref{lem:lipschitz-cs} shows that
\[
 |\EE[\psi(H_\rho f)] - \EE[\psi(U_\rho f]|^2 \leq C^2 \|H_\rho f - U_\rho f\|^2 = C^2 \sum_{d=0}^{n/2}(\rho^{d(1-(d-1)/n)}-\rho^d)^2 \|f^{=d}\|^2.
\]
 Let $R(x) = \rho^x$. Then $\rho^{d(1-(d-1)/n)} - \rho^d = \frac{d(d-1)}{n}(-R'(x))$ for some $x \in [d(1-(d-1)/n),d]$. For such $x$, $R'(x) = \rho^x (-\log \rho) \leq \rho^{d(1-(d-1)/n)} (2\delta) \leq \rho^{d/2} (2\delta)$, using $\delta < 1/2$ and $d \leq n/2$. Therefore
\[
 \rho^{d(1-(d-1)/n)} - \rho^d \leq 2\delta \frac{d(d-1)}{n} \rho^{d/2}.
\]
 The expansion $x^2/(1-x)^3 = \sum_{d=0}^\infty \binom{d}{2} x^d$ implies that $d(d-1)\rho^{d/2} \leq 2\rho/(1-\sqrt{\rho})^3$. Since $1-\sqrt{\rho} = 1-\sqrt{1-\delta} \geq \delta/2$, we conclude that
\[
 \rho^{d(1-(d-1)/n)} - \rho^d \leq \frac{32\delta^{-2}}{n}.
\]
 The lemma follows.
\end{proof}

\section{On harmonicity} \label{sec:harmonic-projection}

Let $f$ be a function on the Boolean cube $\{0,1\}^n$, and let $\tilde f$ be the unique harmonic function agreeing with $f$ on the slice $\binom{[n]}{pn}$. We call $\tilde f$ the \emph{harmonic projection} of $f$ with respect to the slice $\binom{[n]}{pn}$.
In this section we prove Theorem~\ref{thm:harmonic-projection}, which shows that when $f$ depends on $(1-\epsilon)\log n$ variables, it is close to its harmonic projection under the measure $\mu_p$. Together with Corollary~\ref{cor:l2-invariance-harmonic}, this allows us to deduce properties of $f$ on the slice given properties of $f$ on the Boolean cube, an idea formalized in Corollary~\ref{cor:harmonic-projection}.

We start by examining single monomials.

\begin{lemma} \label{lem:monomial-invariance}
 Let $m$ be a monomial of degree $d$, and let $f$ be the unique harmonic multilinear polynomial agreeing with $m$ on $\binom{[n]}{k}$ (where $d \leq k \leq n/2$).
 Then $\deg f = d$ and the coefficient $c_m$ of $m$ in $f$ is
\[ c_m = \frac{n-2d+1}{n-d+1} = 1 - O\left(\frac{d}{n}\right). \]
\end{lemma}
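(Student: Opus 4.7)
The plan is to leverage the symmetries of $m$ and reduce the problem to matching a single scalar. First I would note that by the uniqueness of the harmonic projection (Lemma~\ref{lem:slice-harmonic}), $f$ inherits every symmetry of $m$, so after assuming without loss of generality that $m = x_1 \cdots x_d$, the polynomial $f$ is invariant under $\mathrm{Sym}([d]) \times \mathrm{Sym}([n]\setminus[d])$. Combined with the degree bound $\deg f \le d$ from Corollary~\ref{cor:slice-degree}, this forces the ansatz
\[
 f \;=\; \sum_{s+t \le d} c_{s,t}\, e_s(x_1,\dots,x_d)\, e_t(x_{d+1},\dots,x_n),
\]
where $e_j$ denotes the elementary symmetric polynomial of degree $j$. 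The monomial $x_1 \cdots x_d$ can only arise from the term with $(s,t) = (d,0)$, so the sought-after coefficient is $c_m = c_{d,0}$.

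Next I would impose the two defining properties of $f$. Harmonicity $\sum_i \partial_i f = 0$, via the identity $\sum_{i\in I}\partial_i\, e_s(x_I) = (|I|-s+1)\, e_{s-1}(x_I)$, translates into the two-variable recursion $(d-s)\, c_{s+1,t} + (n-d-t)\, c_{s,t+1} = 0$, which determines every antidiagonal $s+t=j$ from the single scalar $\beta_j := c_{j,0}$ via the explicit formula $c_{j-l,l} = (-1)^l\, \beta_j\, (d-j+l)!\,/\,((d-j)!\,(n-d)^{\underline{l}})$. Then evaluating $f$ at an input with $a$ ones in $[d]$ and $k-a$ ones in $[n]\setminus[d]$ (using $e_s(x_1,\dots,x_d) = \binom{a}{s}$ and $e_t(x_{d+1},\dots,x_n) = \binom{k-a}{t}$) reduces the condition $f \equiv m$ on the slice to the $d+1$ scalar equations
\[
 \sum_{j=0}^{d}\, \beta_j\, P_j(a) \;=\; \charf{a=d}, \qquad a = 0,1,\dots,d,
\]
where each $P_j(a)$ is a polynomial in $a$ of degree $j$ whose coefficients are explicit expressions in $n,k,d$.

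Finally I would extract $\beta_d$ without solving the full triangular system. Since both sides of the equation above have degree at most $d$ in $a$ and agree at $d+1$ distinct points, they coincide as polynomials: $\sum_j \beta_j P_j(a) = \binom{a}{d}$ identically in $a$. Comparing the coefficient of $a^d$, which only $P_d$ can contribute to, yields $\beta_d \cdot [a^d]\, P_d(a) = 1/d!$. A direct expansion of $[a^d]\, P_d(a)$ produces $\sum_{l=0}^d 1/((d-l)!\,(n-d)^{\underline{l}})$, which collapses (after the substitution $m = d-l$ and the hockey stick identity $\sum_{m=0}^d \binom{n-2d+m}{m} = \binom{n-d+1}{d}$) to $(n-d+1)/(d!\,(n-2d+1))$, delivering $c_m = \beta_d = (n-2d+1)/(n-d+1)$. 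The hypothesis $d \le k \le n/2$ ensures $c_m > 0$, hence $\deg f = d$; the asymptotic $c_m = 1 - O(d/n)$ is immediate from rewriting $c_m = 1 - d/(n-d+1)$ and using $n-d+1 \ge n/2$. The main obstacle is the leading-coefficient calculation for $P_d(a)$: it requires carefully bookkeeping the contributions produced by the harmonicity recursion and then recognizing the resulting finite sum as an instance of the hockey stick identity.
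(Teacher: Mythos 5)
Your proof is correct, and it takes a genuinely different route from the paper's. The paper works in the explicit orthogonal basis $\chi_B$ of Lemma~\ref{lem:harmonic-expansion}: after noting $\deg f \le d$ (Corollary~\ref{cor:slice-degree}), it observes that the monomial $m$ appears only in the single basis element $\chi_B$ with $B$ the top set attached to $m$, computes $\hat f(B)=\langle f,\chi_B\rangle/\|\chi_B\|^2$ by evaluating $\langle m,\chi_B\rangle$ directly over $\nu_k$ and plugging in the known norm $\|\chi_B\|^2$ (Lemma~\ref{lem:harmonic-basis-norms}), and reads off $c_m$ from the coefficient of $m$ in $\chi_B$. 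You instead bypass the $\chi_B$ machinery entirely: uniqueness of the harmonic representation gives the $\mathrm{Sym}([d])\times\mathrm{Sym}([n]\setminus[d])$ symmetry, the elementary-symmetric ansatz plus the derivative identity turns harmonicity into a one-parameter recursion along each antidiagonal, and agreement with $m$ on the slice becomes an interpolation identity $\sum_j \beta_j P_j(a)=\binom{a}{d}$ from which only the leading coefficient of $P_d$ is needed; the hockey-stick evaluation $\sum_{l}1/((d-l)!\,(n-d)^{\underline{l}})=(n-d+1)/(d!\,(n-2d+1))$ checks out (e.g.\ it reproduces $(n-1)/n$ for $d=1$ and $(n-3)/(n-1)$ for $d=2$), and the positivity of $c_m$ under $d\le k\le n/2$ gives $\deg f=d$ exactly as in the paper. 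What the paper's route buys is brevity, since the norm formulas are already imported from~\cite{F} and the relevant basis element is identified at once; what your route buys is self-containedness (only Lemma~\ref{lem:slice-harmonic} and Corollary~\ref{cor:slice-degree} are invoked, with no inner-product or norm computations on the slice), at the cost of the more delicate leading-coefficient bookkeeping you flag at the end.
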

\begin{proof}
 Without loss of generality we can assume that $m = x_{n-d+1} \cdots x_n$. Let $B = \{n-d+1,\ldots,n\}$. Recall that the basis element $\chi_B$ is equal to
\[
 \chi_B = \sum_{a_1 \neq \cdots \neq a_d \in [n-d]} (x_{a_1} - x_{n-d+1}) \cdots (x_{a_d} - x_n).
\]

 Let $f$ be the unique harmonic multilinear polynomial agreeing with $m$ on $\binom{[n]}{k}$. Corollary~\ref{cor:slice-degree} shows that $\deg f \leq d$.
 The coefficient $\hat{f}(B)$ of $\chi_B$ in the Fourier expansion of $f$ is given by the formula $\hat{f}(B) = \langle f, \chi_B \rangle / \|\chi_B\|^2$. Since $\deg f \leq d$, it is not hard to check that in the Fourier expansion of $f$, the monomial $m$ only appears in $\chi_B$.
 Therefore the coefficient $c_m$ of $m$ in $f$ is
\[
 c_m = (-1)^d (n-d)^{\underline{d}} \frac{\langle f,\chi_B \rangle}{\|\chi_B\|^2},
\]
 since there are $(n-d)^{\underline{d}}$ summands in the definition of $\chi_B$. The value of $\|\chi_B\|^2$ is given by Lemma~\ref{lem:harmonic-expansion} and Lemma~\ref{lem:harmonic-basis-norms}:
\[
 \|\chi_B\|^2 = \binom{n-d+1}{2} \binom{n-d}{2} \cdots \binom{n-2d+2}{2} 2^d \frac{k^{\underline{d}} (n-k)^{\underline{d}}}{n^{\underline{2d}}}.
\]
 We proceed to compute $\langle f,\chi_B \rangle$. Let $S \in \binom{[n]}{k}$. If $f(S) \chi_B(S) \neq 0$ then $B \subseteq S$, which happens with probability $k^{\underline{d}}/n^{\underline{d}}$. The number of non-zero terms (each equal to $(-1)^d$) is the number of choices of $a_1,\ldots,a_d \notin S$, namely $(n-k)^{\underline{d}}$. Therefore $\langle f,\chi_B \rangle = (-1)^d k^{\underline{d}} (n-k)^{\underline{d}}/n^{\underline{d}}$, and so
\begin{align*}
 \!c_m &= (n-d)^{\underline{d}} \cdot \frac{k^{\underline{d}} (n-k)^{\underline{d}}}{n^{\underline{d}}} \cdot \frac{n^{\underline{2d}}}{(n-d+1)(n-d)^2\cdots(n-2d+2)^2(n-2d+1) k^{\underline{d}} (n-k)^{\underline{d}}} \\ &=
 \frac{(n-d)^{\underline{d}} (n-d)^{\underline{d}}}{(n-d+1)(n-d)^2\cdots(n-2d+2)^2(n-2d+1)} \\ &=
 \frac{n-2d+1}{n-d+1}.
\end{align*}
 Finally, since $c_m \neq 0$ and $\deg f \leq d$, we can conclude that $\deg f = d$.
\end{proof}

As a consequence, we obtain a result on Fourier characters on the cube.

\begin{lemma} \label{lem:character-invariance}
 Let $\omega = \omega_S$ be a Fourier character with respect to the measure $\mu_p$ of degree $d$, and let $\tilde\omega$ be the unique harmonic multilinear polynomial agreeing with $\omega$ on $\binom{[n]}{np}$ (where $d \leq np \leq n/2$). For $\pi \in \{\mu_p,\cG_p\}$ we have
\[
 \|\omega - \tilde\omega\|_\pi^2 = O\left(\frac{d^2}{p(1-p)n}\right).
\]
\end{lemma}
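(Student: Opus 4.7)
My plan is to expand $\|\omega - \tilde\omega\|_\pi^2 = \|\omega\|_\pi^2 + \|\tilde\omega\|_\pi^2 - 2\langle\omega,\tilde\omega\rangle_\pi$ and estimate each term. The key structural fact is that the Fourier characters $\{\omega_T\}$ are orthonormal with respect to \emph{both} $\mu_p$ and $\cG_p$: under either measure the coordinates $x_i$ are independent with $\EE[x_i]=p$ and $\VV[x_i]=p(1-p)$, so $\omega_T=\prod_{i\in T}(x_i-p)/\sqrt{p(1-p)}$ is a product of independent mean-zero, variance-one factors, and the usual orthonormality computation goes through identically for $\pi\in\{\mu_p,\cG_p\}$. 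In particular $\|\omega\|_\pi^2 = 1$ for both choices of $\pi$.

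For the norm of $\tilde\omega$: since $\tilde\omega = \omega$ on the slice, $\|\tilde\omega\|_{\nu_{pn}}^2 = \|\omega\|_{\nu_{pn}}^2$, and because $\omega_S^2$ depends on only the $d$ coordinates indexed by $S$, Lemma~\ref{lem:measures-close} gives $\|\omega\|_{\nu_{pn}}^2 = \|\omega\|_{\mu_p}^2 \pm O(d^2/(p(1-p)n)) = 1 \pm O(d^2/(p(1-p)n))$. Since $\tilde\omega$ is a harmonic multilinear polynomial of degree at most $d$ by Corollary~\ref{cor:slice-degree}, Corollary~\ref{cor:l2-invariance-harmonic} transfers the norm to $\pi$: $\|\tilde\omega\|_\pi^2 = 1 \pm O(d^2/(p(1-p)n))$.

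The main computation is $\langle\omega,\tilde\omega\rangle_\pi$. Write $\tilde\omega = \sum_{|T|\leq d}\hat{\tilde\omega}(T)\omega_T$; orthonormality then gives $\langle\omega,\tilde\omega\rangle_\pi = \hat{\tilde\omega}(S)$. I would determine $\hat{\tilde\omega}(S)$ by matching the coefficient of the top monomial $m_S = \prod_{i \in S} x_i$. On one hand, $m_S$ appears in $\omega_T$ only if $S \subseteq T$; combined with $|T|\leq d = |S|$ this forces $T = S$, so the coefficient of $m_S$ in $\tilde\omega$ equals $\hat{\tilde\omega}(S)(p(1-p))^{-d/2}$. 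On the other hand, expand $\omega = (p(1-p))^{-d/2}\sum_{T\subseteq S}(-p)^{d-|T|} m_T$; by linearity of harmonic projection, $\tilde\omega = (p(1-p))^{-d/2}\sum_{T\subseteq S}(-p)^{d-|T|}\tilde m_T$, and Lemma~\ref{lem:monomial-invariance} says $\deg\tilde m_T = |T|$, so only $\tilde m_S$ contributes the monomial $m_S$, with coefficient $(n-2d+1)/(n-d+1)$ inherited from that same lemma. Matching gives $\hat{\tilde\omega}(S) = (n-2d+1)/(n-d+1)$.

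Substituting yields
\[ \|\omega - \tilde\omega\|_\pi^2 = 2 - \frac{2(n-2d+1)}{n-d+1} \pm O\!\left(\frac{d^2}{p(1-p)n}\right) = \frac{2d}{n-d+1} \pm O\!\left(\frac{d^2}{p(1-p)n}\right), \]
and since $p(1-p)\leq 1/4 \leq d$ (assuming $d \geq 1$; the case $d=0$ is trivial) the leading term $O(d/n)$ is absorbed into the second, yielding the claimed bound. The one step that requires genuine care is the coefficient extraction: everything there hinges on linearity of harmonic projection together with the degree bound $\deg\tilde m_T \leq |T|$ from Lemma~\ref{lem:monomial-invariance}, so that cross-terms from $T\subsetneq S$ cannot contaminate the coefficient of the top monomial $m_S$.
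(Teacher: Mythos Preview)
Your proof is correct and follows essentially the same route as the paper: both compute $\hat{\tilde\omega}(S)$ by tracking the coefficient of the top monomial $m_S$ via Lemma~\ref{lem:monomial-invariance}, and both estimate $\|\tilde\omega\|_\pi^2$ through $\|\omega\|_{\nu_{pn}}^2$ and Corollary~\ref{cor:l2-invariance-harmonic}. The only difference is cosmetic: the paper converts the monomial coefficient into the Fourier coefficient using the shift-invariance of harmonic polynomials (the identity $\tilde\omega(x)=\tilde\omega(x-p)$), whereas you achieve the same isolation by the degree bound $\deg\tilde\omega\le d$; both devices are equally valid.
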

\begin{proof}
 Recall that
\[
 \omega = \frac{1}{(p(1-p))^{d/2}} \prod_{i \in S} (x_i - p).
\]
 Lemma~\ref{lem:monomial-invariance} shows that
\[
 \tilde\omega = \frac{c}{(p(1-p))^{d/2}} \prod_{i \in S} x_i + \eta, \quad c = 1 - O\left(\frac{d}{n}\right),
\]
 where $\eta$ involves other monomials. In fact, since $\tilde\omega$ is harmonic, it is invariant under shifting all the variables by $p$, and so
\[
 \tilde\omega = c\omega + \eta',
\]
 where $\eta'$ involves other characters. Due to orthogonality of characters we have
\[
 \|\tilde\omega - \omega\|_\pi^2 = \|\tilde\omega\|_\pi^2 - (2c-1)\|\omega\|_\pi^2 = \|\tilde\omega\|_\pi^2 - (2c-1) = \|\tilde\omega\|_\pi^2 - 1 + O\left(\frac{d}{n}\right).
\]

 Since $\tilde\omega$ is harmonic, Corollary~\ref{cor:l2-invariance-harmonic} allows us to estimate $\|\tilde\omega\|_\pi^2$ given $\|\tilde\omega\|_{\nu_{pn}}^2$, which we proceed to estimate:
\begin{align*}
 \|\tilde\omega\|_{\nu_{pn}}^2 &= \frac{1}{(p(1-p))^d} \sum_{t=0}^d \binom{d}{t} \frac{k^{\underline{t}}(n-k)^{\underline{d-t}}}{n^{\underline{d}}} (1-p)^{2t} p^{2(d-t)} \\ &=
 \frac{1}{(p(1-p))^d} \sum_{t=0}^d \binom{d}{t} p^t (1-p)^{d-t} (1-p)^{2t} p^{2(d-t)} \left(1 \pm O\left(\frac{d^2}{p(1-p)n}\right)\right) \\ &=
 1 \pm O\left(\frac{d^2}{p(1-p)n}\right).
\end{align*}
 Corollary~\ref{cor:l2-invariance-harmonic} shows that the same estimate holds even with respect to $\pi$, and so
\[
 \|\tilde\omega - \omega\|_\pi^2 = \|\tilde\omega\|_\pi^2 - 1 + O\left(\frac{d^2}{p(1-p)n}\right) = O\left(\frac{d^2}{p(1-p)n}\right). \qedhere
\]
\end{proof}

We can now conclude that a multilinear polynomial depending on a small number of variables is close to its harmonic projection.

\begin{theorem} \label{thm:harmonic-projection}
 Let $f$ be a multilinear polynomial depending on $d$ variables, and let $\tilde{f}$ be the unique harmonic multilinear polynomial agreeing with $f$ on $\binom{[n]}{pn}$, where $d \leq pn \leq n/2$. For $\pi \in \{\mu_p,\cG_p\}$ we have
\[
 \|f - \tilde{f}\|_\pi^2 = O\left(\frac{d^22^d}{p(1-p)n}\right)\|f\|_\pi^2.
\]
\end{theorem}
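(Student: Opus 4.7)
The plan is to reduce the problem to individual Fourier characters, where Lemma~\ref{lem:character-invariance} provides the bound, and then reassemble via Cauchy--Schwarz at a cost of a factor of~$2^d$ (the number of Fourier coefficients that can be non-zero).

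First I would assume without loss of generality that $f$ depends only on the coordinates $x_1,\ldots,x_d$, and expand $f$ in the Fourier--Walsh basis with respect to $\mu_p$:
\[
 f = \sum_{S \subseteq [d]} \hat{f}(S)\,\omega_S.
\]
Since the $\omega_S$ factor as products of mean-zero, variance-one quantities that are independent under both $\mu_p$ and $\cG_p$, they are orthonormal with respect to either measure; hence $\|f\|_\pi^2 = \sum_{S \subseteq [d]} \hat{f}(S)^2$ for $\pi \in \{\mu_p,\cG_p\}$. Next, by uniqueness of the harmonic representative (Lemma~\ref{lem:slice-harmonic}), harmonic projection onto $\binom{[n]}{pn}$ is a linear operator, so
\[
 \tilde{f} = \sum_{S \subseteq [d]} \hat{f}(S)\,\tilde{\omega}_S,
\]
where $\tilde{\omega}_S$ denotes the harmonic projection of $\omega_S$.

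Subtracting and applying Cauchy--Schwarz to the resulting sum of at most $2^d$ vectors in $L^2(\pi)$ gives
\[
 \|f - \tilde{f}\|_\pi^2 \leq 2^d \sum_{S \subseteq [d]} \hat{f}(S)^2\,\|\omega_S - \tilde{\omega}_S\|_\pi^2.
\]
Because $|S| \leq d \leq pn \leq n/2$, Lemma~\ref{lem:character-invariance} yields $\|\omega_S - \tilde{\omega}_S\|_\pi^2 = O(d^2/(p(1-p)n))$ uniformly in $S$, and factoring this uniform bound out of the sum gives the claimed inequality $\|f - \tilde{f}\|_\pi^2 = O(d^2 2^d/(p(1-p)n))\|f\|_\pi^2$.

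There is no serious obstacle: the analytic work has already been absorbed into Lemma~\ref{lem:character-invariance}, and all that remains is the bookkeeping described above. The one point that warrants explicit mention is the linearity of harmonic projection, which is not entirely formal since projection is defined via agreement on the slice plus harmonicity; but it follows immediately from uniqueness. The factor $2^d$ from Cauchy--Schwarz is not improvable without orthogonality of the $\omega_S - \tilde{\omega}_S$, which we do not have, but this loss is harmless for the intended applications where $d$ is small compared to $\log n$.
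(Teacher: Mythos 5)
Your proposal is correct and follows essentially the same route as the paper's proof: expand $f$ in the characters $\omega_S$ over the $d$ relevant coordinates, use linearity of harmonic projection to write $\tilde{f} = \sum_S \hat{f}(S)\tilde{\omega}_S$, and combine Lemma~\ref{lem:character-invariance} with Cauchy--Schwarz to absorb the $2^d$ factor. No gaps.
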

\begin{proof}
 We can assume without loss of generality that $f$ depends on the first $d$ coordinates.
 Express $f$ as a linear combination of characters: $f = \sum_{S \subseteq [d]} \hat{f}(S) \omega_S$. Clearly $\tilde{f} = \sum_{S \subseteq [d]} \hat{f}(S) \tilde{\omega}_S$, where $\tilde{\omega}_S$ is the unique function agreeing with $\omega_S$ on $\binom{[n]}{pn}$. Lemma~\ref{lem:character-invariance} together with the Cauchy--Schwartz inequality shows that
\[
 \|f-\tilde{f}\|_\pi^2 \leq 2^d \sum_{S \subseteq [d]} \hat{f}(S)^2 O\left(\frac{d^2}{p(1-p)n}\right) = O\left(\frac{d^2 2^d}{p(1-p)n}\right) \|f\|_{\pi}^2.
\]
 This completes the proof.
\end{proof}

Combining Theorem~\ref{thm:harmonic-projection} with Corollary~\ref{cor:l2-invariance-harmonic}, we show how to deduce properties of $f$ on the slice given its properties on the cube.

\begin{corollary} \label{cor:harmonic-projection}
 Let $f$ be a multilinear polynomial depending on $d$ variables, and let $\tilde{f}$ be the unique harmonic multilinear polynomial agreeing with $f$ on $\binom{[n]}{pn}$, where $d \leq pn \leq n/2$. Suppose that $\|f\|_{\mu_p}^2 = \|f\|_{\cG_p}^2 \leq 1$. For $\pi \in \{\mu_p,\cG_p\}$ we have:
\begin{enumerate}
 \item $|\EE_\pi[f] - \EE_{\nu_{pn}}[\tilde{f}]| = O_p(\frac{d 2^{d/2}}{\sqrt{n}})$.
 \item $\|\tilde{f}\|_{\nu_{pn}} = 1 \pm O_p(\frac{d 2^{d/2}}{\sqrt{n}})$.
 \item $\VV[\tilde{f}]_{\nu_{pn}} = \VV[f]_\pi \pm O_p(\frac{d 2^{d/2}}{\sqrt{n}})$.
 \item For all $\rho \in [0,1]$, $\stabilityc_\rho[\tilde{f}]_{\nu_{pn}} = \stabilityc_\rho[f]_\pi \pm O_p(\frac{d 2^{d/2}}{\sqrt{n}})$.
 \item For all $\ell \leq d$, $\|\tilde{f}^{=\ell}\|_{\nu_{pn}} = \|f^{=\ell}\|_{\mu_p} \pm O_p(\frac{d 2^{d/2}}{\sqrt{n}})$.
\end{enumerate}
\end{corollary}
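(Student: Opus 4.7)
The plan is to lean on the key $L^2$ estimate $\|f - \tilde f\|_\pi \leq \epsilon$, where $\epsilon := O_p(d\,2^{d/2}/\sqrt n)$ comes directly from Theorem~\ref{thm:harmonic-projection} together with $\|f\|_\pi \leq 1$, combined with the transfer between $\pi$ and $\nu_{pn}$ supplied by Corollary~\ref{cor:l2-invariance-harmonic} (valid since $\deg \tilde f \leq d$ by Corollary~\ref{cor:slice-degree}). A conceptual point I would use throughout is that for the harmonic multilinear polynomial $\tilde f$, the three candidate notions of $\tilde f^{=\ell}$ (Fourier-degree under $\mu_p$, Hermite-degree under $\cG_p$, harmonic-homogeneous-degree on the slice) all coincide, as recorded in Section~\ref{sec:analysis}.

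For (1), the harmonicity of $\tilde f$ together with Lemma~\ref{lem:harmonic-expansion} gives $\EE_{\nu_{pn}}[\tilde f] = \EE_\pi[\tilde f]$, and Cauchy--Schwarz with the constant function $1$ then yields $|\EE_\pi[f] - \EE_\pi[\tilde f]| \leq \|f - \tilde f\|_\pi \leq \epsilon$. For (2), the triangle inequality in $L^2(\pi)$ gives $\bigl|\|\tilde f\|_\pi - 1\bigr| \leq \epsilon$, and Corollary~\ref{cor:l2-invariance-harmonic} lifts this to $\|\tilde f\|_{\nu_{pn}} = 1 \pm O_p(\epsilon)$ after absorbing the multiplicative error $O(d^2/(p(1-p)n))$. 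Part (3) is immediate from (1) and (2) via $\VV[g] = \|g\|^2 - \EE[g]^2$ and boundedness of $\|g\|$ and $|\EE[g]|$.

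Part (5) is the step where the identification of the two meanings of $\tilde f^{=\ell}$ pays off, and is what I expect to require the most care. Because the $\pi$-Fourier decomposition is $L^2(\pi)$-orthogonal for multilinear polynomials, the bound $\|f - \tilde f\|_\pi \leq \epsilon$ descends componentwise to $\|f^{=\ell} - \tilde f^{=\ell}\|_\pi \leq \epsilon$; here I crucially use that $\tilde f^{=\ell}$ as a $\pi$-Fourier component agrees with $\tilde f^{=\ell}$ as a slice harmonic-homogeneous component, since $\tilde f$ is harmonic. Matching moments give $\|f^{=\ell}\|_{\mu_p} = \|f^{=\ell}\|_{\cG_p}$, and Corollary~\ref{cor:l2-invariance-harmonic} applied to the harmonic polynomial $\tilde f^{=\ell}$ of degree at most $d$ turns this into $\|\tilde f^{=\ell}\|_{\nu_{pn}} = \|\tilde f^{=\ell}\|_\pi \pm O_p(d^2/n)$, whence $\|\tilde f^{=\ell}\|_{\nu_{pn}} = \|f^{=\ell}\|_{\mu_p} \pm O_p(\epsilon)$.

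Finally, (4) follows from the spectral formula $\stabilityc_\rho[g] = \sum_\ell \rho^\ell \|g^{=\ell}\|^2$. The gap $|\stabilityc_\rho[f]_\pi - \stabilityc_\rho[\tilde f]_\pi| = |\langle f, T_\rho f\rangle_\pi - \langle \tilde f, T_\rho \tilde f\rangle_\pi|$ splits as two inner-product terms, each bounded by $\|f - \tilde f\|_\pi \cdot \max(\|f\|_\pi, \|\tilde f\|_\pi) = O_p(\epsilon)$ via contractivity of $T_\rho$. The change of measure from $\stabilityc_\rho[\tilde f]_\pi$ to $\stabilityc_\rho[\tilde f]_{\nu_{pn}}$ is controlled termwise by Corollary~\ref{cor:l2-invariance-harmonic}, within $O_p(d^2/n)$, which is absorbed into $O_p(\epsilon)$ in the regime where the bound is meaningful. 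The only step that takes any thought is the harmonic-versus-Fourier identification used in part~(5); everything else is routine unpacking.
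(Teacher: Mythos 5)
Your proposal is correct, and for parts (1)--(4) it follows essentially the same route as the paper: start from the $L^2$ bound $\|f-\tilde f\|_\pi = O_p(d\,2^{d/2}/\sqrt{n})$ supplied by Theorem~\ref{thm:harmonic-projection}, transfer norms from $\pi$ to $\nu_{pn}$ via Corollary~\ref{cor:l2-invariance-harmonic}, and finish with the triangle inequality and a contraction/Lipschitz argument for noise stability. Your use of Lemma~\ref{lem:harmonic-expansion} in (1) to get $\EE_{\nu_{pn}}[\tilde f]=\EE_\pi[\tilde f]$ exactly (rather than only approximately) is in fact slightly cleaner than what the paper's phrasing suggests, and is the correct way to read the paper's ``throughout we use Corollary~\ref{cor:l2-invariance-harmonic}'' remark.

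Where you genuinely diverge is part (5), and your route is cleaner. The paper expands $f=\sum_{S\subseteq[d]}c_S\omega_S$, compares $\tilde f^{=\ell}$ to the auxiliary object $\widetilde{f^{=\ell}}$ by applying Lemma~\ref{lem:character-invariance} to each character with $|S|>\ell$, and pays a fresh Cauchy--Schwarz loss of $2^d$; it then has to chase the resulting triangle inequality through item (2). You instead note that degree projections are $L^2(\pi)$-orthogonal, so $(f-\tilde f)^{=\ell}=f^{=\ell}-\tilde f^{=\ell}$ and $\|f^{=\ell}-\tilde f^{=\ell}\|_\pi\leq\|f-\tilde f\|_\pi$ directly, with the key point (recorded in the paper's ``Homogeneous parts'' paragraph) that for harmonic $\tilde f$ the $\mu_p$-Fourier, Hermite, and slice notions of $\tilde f^{=\ell}$ coincide, and moreover $\tilde f^{=\ell}$ is itself harmonic so Corollary~\ref{cor:l2-invariance-harmonic} applies to it. This sidesteps the intermediate $\widetilde{f^{=\ell}}$ entirely. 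Both arguments yield the same final error since both inherit the $2^d$ from Theorem~\ref{thm:harmonic-projection}; yours just avoids paying it twice before absorbing. The one hygiene point worth flagging in a write-up: when converting the multiplicative error $1\pm O_p(d^2/n)$ from Corollary~\ref{cor:l2-invariance-harmonic} into an additive one, use $\|\tilde f^{=\ell}\|_\pi\leq\|\tilde f\|_\pi\leq 1+\epsilon=O(1)$ and the observation that $d^2/n\leq\epsilon$ in the regime where the bound is nontrivial; you gesture at this (``absorbed... in the regime where the bound is meaningful'') and it is indeed fine.
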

\begin{proof}
 Throughout the proof, we are using Corollary~\ref{cor:l2-invariance-harmonic} to convert information on $\tilde{f}$ with respect to $\pi$ to information on $\tilde{f}$ with respect to $\nu_{pn}$. All calculations below are with respect to $\pi$.

 For the first item, note that
\[
 |\EE[f] - \EE[\tilde{f}]| \leq \|f-\tilde{f}\|_1 \leq \|f-\tilde{f}\|_2 = O_p\left(\frac{d 2^{d/2}}{\sqrt{n}}\right).
\]

 The second item follows from the triangle inequality
\[
 \|f\| - \|f - \tilde{f}\| \leq \|\tilde{f}\| \leq \|f\| + \|f - \tilde{f}\|.
\]

 For the third item, notice first that $|\EE[f]| \leq \|f\|_1 \leq \|f\|_2 = 1$. The item now follows from the previous two.

 The fourth item follows from the fact that $\stabilityc_\rho$ is $1$-Lipschitz, which in turn follows from the fact that $\stabilityc_\rho[f] = \|T_{\sqrt{\rho}}f\|^2$ and that $T_{\sqrt{\rho}}$ is a contraction.

 For the fifth item, assume that $f$ depends on the first $d$ variables, and write $f = \sum_{S \subseteq [d]} c_S \omega_S$. We have
\[
 \tilde{f}^{=\ell} = \sum_{S \subseteq [d]} c_S (\tilde{\omega}_S)^{=\ell} = \widetilde{f^{=\ell}} + \sum_{|S|>\ell} c_S (\tilde{\omega}_S)^{=\ell}.
\]
 Lemma~\ref{lem:character-invariance} shows that for $|S| > \ell$, $\|(\tilde{\omega}_S)^{=\ell}\|^2 \leq \|\omega_S - \tilde{\omega}_S\|^2 = O_p(\frac{d^2}{n})$. Therefore
\[
 \|\tilde{f}^{=\ell} - \widetilde{f^{=\ell}}\|^2 \leq 2^d \sum_{|S|>\ell} c_S^2 O_p\left(\frac{d}{n}\right) = O_p\left(\frac{d^22^d}{n}\right).
\]
 The fifth item now follows from the triangle inequality and the second item.
\end{proof}

\section{Invariance principle} \label{sec:invariance}

In the sequel, we assume that parameters $p \in (0,1/2]$ and $n$ such that $pn$ is an integer are given.
The assumption $p \leq 1/2$ is without loss of generality.

We will use big O notation in the following way: $f = O_p(g)$ if for all $n \geq N(p)$, it holds that $f \leq C(p) g$, where $N(p),C(p)$ are continuous in $p$. In particular, for any choice of $p_L,p_H$ satisfying $0 < p_L\leq p_H < 1$, if $p \in [p_L,p_H]$ then $f = O(g)$. Stated differently, as long as $\lambda \leq p \leq 1-\lambda$, we have a uniform estimate $f = O_\lambda(g)$.
Similarly, all constants depending on $p$ (they will be of the form $A_p$ for various letters $A$) depend continuously on $p$.

\paragraph*{Proof sketch} Let $\psi$ be a Lipschitz functional and $f$ a harmonic multilinear polynomial of unit variance, low slice-influences, and low degree $d$. A simple argument shows that $f$ also has low cube-influences, and this implies that
\[
 \EE_{\nu_k}[\psi(f)] \approx \EE_{\nu_{pn}}[\psi(f)] \pm O_p\left(\frac{|k-np|}{\sqrt{n}} \cdot \sqrt{d}\right).
\]
The idea is now to apply the multidimensional invariance principle jointly to $f$ and to $S = \frac{x_1+\cdots+x_n-np}{\sqrt{p(1-p)n}}$, deducing
\[
 \EE_{\mu_p} [\psi(f) \charf{|S| \leq \sigma}] = \EE_{\cG_p} [\psi(f) \charf{|S| \leq \sigma}] \pm \epsilon.
\]
An application of Lemma~\ref{lem:harmonic-gauss} shows that
\[
 \EE_{\cG_p} [\psi(f) \charf{|S| \leq \sigma}] = \Pr_{\cG_p}[|S| \leq \sigma] \EE_{\cG_p} [\psi(f)].
\]
Similarly,
\[
 \EE_{\mu_p}[\psi(f) \charf{|S| \leq \sigma}] = \Pr_{\mu_p}[|S| \leq \sigma] (\EE_{\mu_p} [\psi(f)] \pm O_p(\sigma \sqrt{d})).
\]
Since $\Pr_{\cG_p}[|S| \leq \sigma] \approx \Pr_{\mu_p}[|S| \leq \sigma] = \Theta_p(\sigma)$, we can conclude that
\[
 \EE_{\nu_{pn}} [\psi(f)] \approx \EE_{\cG_p} [\psi(f)] \pm O_p\left(\sigma \sqrt{d} + \frac{\epsilon}{\sigma}\right).
\]
By choosing $\sigma$ appropriately, we balance the two errors and obtain our invariance principle.

For minor technical reasons, instead of using $\charf{|S| \leq \sigma}$ we actually use a Lipschitz function supported on $|S| \leq \sigma$.

\paragraph*{Main theorems} Our main theorem is Theorem~\ref{thm:invariance}, proved in Section~\ref{sec:invariance-main} on page~\pageref{thm:invariance}. This is an invariance principle for low-degree, low-influence functions and Lipschitz functionals, comparing the uniform measure on the slice $\nu_{pn}$ to the measure $\mu_p$ on the Boolean cube and to the Gaussian measure $\cG_p$.

Some corollaries appear in Section~\ref{sec:invariance-cor} on page~\pageref{sec:invariance-cor}. Corollary~\ref{cor:levy-distance} gives a bound on the L\'evy distance between the distributions $f(\nu_{pn})$ and $f(\cG_p)$ for low-degree, low-influences functions. Corollary~\ref{cor:cdf-distance} gives a bound on the CDF distance between the distributions $f(\nu_{pn})$ and $f(\cG_p)$ for low-degree, low-influences functions. Corollary~\ref{cor:invariance-noise} extends the invariance principle to functions of arbitrary degree to which a small amount of noise has been applied.

\subsection{Main argument} \label{sec:invariance-main}

We start by showing that from the point of view of L2 quantities, distributions similar to $\mu_p$ behave similarly.

\begin{definition} \label{def:like-distribution}
Let $p \in (0,1)$. A parameter $q \in (0,1)$ is \emph{$p$-like} if $|p-q| \leq \sqrt{p(1-p)/n}$.
A distribution is \emph{$p$-like} if it is one of the following:
$\mu_q,\nu_{qn},\cG_q$, where $q$ is $p$-like.
\end{definition}

\begin{lemma} \label{lem:l2-invariance}
Let $f$ be a harmonic multilinear polynomial of degree $d \leq \sqrt{n}$, and let $\pi_1,\pi_2$ be two $p$-like distributions. Then
\[ \|f\|^2_{\pi_1} = \|f\|^2_{\pi_2} \left(1 \pm O_p\left(d/\sqrt{n}\right)\right). \]
The same holds if we replace $\|f\|^2$ with $\Infs_{ij}[f] = \|f-f^{(ij)}\|^2$ or $\Infc_i[f] = \|\frac{\partial f}{\partial x_i}\|^2$.

Furthermore, there is a constant $S_p$ such that if $d \leq S_p \sqrt{n}$ then for all $p$-like distributions $\pi_1,\pi_2$,
\[ \frac{1}{2} \leq \frac{\|f\|^2_{\pi_1}}{\|f\|^2_{\pi_2}} \leq 2. \]
\end{lemma}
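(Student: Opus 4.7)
The plan is to reduce everything to a direct comparison of the basis norms $\|\chi_e\|^2$ across $p$-like distributions, and then invoke Corollary~\ref{cor:norms}. By Lemma~\ref{lem:harmonic-basis-norms}, for any $p$-like parameter $q$,
\[
 \|\chi_e\|^2_{\mu_q} = \|\chi_e\|^2_{\cG_q} = (2q(1-q))^e, \qquad
 \|\chi_e\|^2_{\nu_{qn}} = (2q(1-q))^e\left(1 \pm O\!\left(\frac{e^2}{q(1-q)n}\right)\right).
\]
Using the identity $q(1-q) - p(1-p) = (q-p)(1-q-p)$ together with $|q-p| \leq \sqrt{p(1-p)/n}$, I would show $q(1-q)/(p(1-p)) = 1 + O_p(1/\sqrt{n})$, hence $(2q(1-q))^e/(2p(1-p))^e = 1 \pm O_p(e/\sqrt{n})$ for $e \leq d \leq \sqrt{n}$ (using $(1+x)^e = 1 + O(ex)$ while $|ex|$ stays bounded, which holds for $e \leq d \leq \sqrt n$ times a small constant depending on $p$). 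The slice correction contributes an additional $O_p(e^2/n)$, which is absorbed by $O_p(e/\sqrt n)$. Applying Corollary~\ref{cor:norms} then yields $\|f\|^2_{\pi_1} = \|f\|^2_{\pi_2}(1 \pm O_p(d/\sqrt{n}))$. For the ``furthermore'' clause, I would choose $S_p$ small enough that the $O_p(d/\sqrt{n})$ error stays below $1/2$ whenever $d \leq S_p\sqrt{n}$, giving the factor-$2$ bound.

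For $\Infs_{ij}[f] = \|f - f^{(ij)}\|^2$, I would observe that the harmonicity condition $\sum_k \partial_k h = 0$ is invariant under permutation of the coordinates, so $f^{(ij)}$ is harmonic and $f - f^{(ij)}$ is a harmonic multilinear polynomial of degree at most $d$. The $\|f\|^2$ part then applies directly.

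For $\Infc_i[f] = \|\partial f/\partial x_i\|^2$, let $g = \partial f/\partial x_i$. This is a multilinear polynomial of degree at most $d-1$ not depending on $x_i$. A short computation using commutativity of partials, multilinearity ($\partial_i^2 f = 0$), and harmonicity of $f$ gives
\[
 \sum_{j \neq i} \partial_j g = \partial_i \sum_{j \neq i} \partial_j f = -\partial_i^2 f = 0,
\]
so $g$ is harmonic as a polynomial in the $n-1$ variables $\{x_j : j \neq i\}$. I would then rewrite $\|g\|^2_\pi$ in terms of $(n-1)$-variable distributions: for product measures $\mu_q,\cG_q$ the norm is unchanged; for $\nu_{qn}$ one conditions on $x_i$, getting $\|g\|^2_{\nu_{qn}} = (1-q)\|g\|^2_{\nu_{qn}^{n-1}} + q\|g\|^2_{\nu_{qn-1}^{n-1}}$ (a convex combination of $(n-1)$-variable slice norms). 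Applying the already-proved $\|f\|^2$ case on $n-1$ variables to $g$ then yields the desired estimate.

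The main bookkeeping obstacle is in Step~3: I must verify that each $(n-1)$-variable measure encountered (in particular $\nu_{qn}^{n-1}$ and $\nu_{qn-1}^{n-1}$, whose rates are $qn/(n-1)$ and $(qn-1)/(n-1)$ respectively) is still $p$-like on $n-1$ variables. The shift from $q$ to $qn/(n-1)$ or $(qn-1)/(n-1)$ is $O(1/n)$, well within the $\sqrt{p(1-p)/(n-1)} = \Theta_p(1/\sqrt n)$ window that defines $p$-likeness, so this is a genuine but routine check. Beyond this, the argument is purely a reduction to the explicit basis-norm computation in Lemma~\ref{lem:harmonic-basis-norms}.
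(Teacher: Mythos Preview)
Your proposal is correct and follows the same route as the paper: compare the basis norms $\|\chi_e\|^2$ across $p$-like distributions via Lemma~\ref{lem:harmonic-basis-norms} (the paper does this with the derivative of $q\mapsto(2q(1-q))^e$ and the mean value theorem, you with the identity $q(1-q)-p(1-p)=(q-p)(1-p-q)$; these are equivalent), and then invoke Corollary~\ref{cor:norms}.

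One simplification for the $\Infc_i$ case: there is no need to descend to $n-1$ variables. The polynomial $g=\partial f/\partial x_i$ is already harmonic in all $n$ variables, since
\[
\sum_{k=1}^n \frac{\partial g}{\partial x_k}=\sum_{k=1}^n \frac{\partial^2 f}{\partial x_k\,\partial x_i}=\frac{\partial}{\partial x_i}\sum_{k=1}^n\frac{\partial f}{\partial x_k}=0,
\]
and $g$ is multilinear of degree at most $d-1$. Hence the already-proved $\|f\|^2$ case applies to $g$ directly under each $p$-like measure on $n$ variables, and the conditioning on $x_i$ together with the bookkeeping about whether $\nu_{qn}^{n-1}$ and $\nu_{qn-1}^{n-1}$ remain $p$-like on $n-1$ variables can be dispensed with entirely. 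The paper's proof in fact treats both influence variants implicitly in this way, simply noting that Corollary~\ref{cor:norms} finishes the job.
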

\begin{proof}
Let $\alpha_D(q) = (2q(1-q))^D$, where $D \leq d$. An easy calculation shows that $\alpha'_D(q) = 2(1-2q)D(2q(1-q))^{D-1}$, and in particular $|\alpha'_D(q)| = O(D\alpha_D(q)/q(1-q))$. It follows that for $p$-like $q$, $\alpha_D(q) = \alpha_D(p)(1 \pm O_p(D/\sqrt{n}))$. Lemma~\ref{lem:harmonic-basis-norms} thus shows that for $\pi \in \{\mu_q,\nu_{qn},\cG_q\}$ and all $D \leq d$,
\[
 \|\chi_D\|_\pi^2 = \alpha_D(q) \left(1 \pm O_p\left(\frac{D^2}{n}\right)\right) = (2p(1-p))^D \left(1 \pm O_p\left(\frac{D}{\sqrt{n}} + \frac{D^2}{n}\right)\right).
\]
Since $D \leq d$ and $d \leq \sqrt{n}$ implies $d^2/n \leq d/\sqrt{n}$, we conclude that
\[
 \|\chi_D\|_\pi^2 = (2p(1-p))^D \left(1 \pm O_p\left(\frac{d}{\sqrt{n}}\right)\right).
\]
The lemma now follows from Corollary~\ref{cor:norms}.
\end{proof}

We single out polynomials whose degree satisfies $d \leq S_p \sqrt{n}$.

\begin{definition} \label{def:low-degree}
A polynomial has \emph{low degree} if its degree is at most $S_p\sqrt{n}$, where $S_p$ is the constant in Lemma~\ref{lem:l2-invariance}.
\end{definition}

We can bound the cube-influence of a harmonic multilinear polynomial in terms of its slice-influence.

\begin{lemma} \label{lem:derivative-norm}
Let $f$ be a harmonic multilinear polynomial of low degree $d$, and let $\pi$ be a $p$-like distribution.
For all $i \in [n]$, with respect to $\pi$:
\[
 \Infc_i[f] \leq
 O_p\left(\frac{d}{n}\VV[f] + \Infs_i[f]\right).
\]
\end{lemma}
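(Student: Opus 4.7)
The plan is to establish the bound first under the product measure $\mu_p$, where coordinate independence allows clean calculations, and then to transfer it to an arbitrary $p$-like $\pi$ via Lemma~\ref{lem:l2-invariance}. This transfer is legitimate because each of $f$, $\partial_i f$, and $f - f^{(ij)}$ is a harmonic multilinear polynomial of degree at most $d$: $\partial_i f$ is harmonic since $\sum_k \partial_k \partial_i f = \partial_i \sum_k \partial_k f = 0$, and $f^{(ij)}$ (and hence $f - f^{(ij)}$) is harmonic by the permutation invariance of the harmonicity equation.

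Under $\mu_p$, the relation $\sum_j \partial_j f = 0$ yields $\partial_i f = \frac{1}{n} \sum_{j=1}^n (\partial_i f - \partial_j f)$, so Cauchy--Schwarz gives $\Infc_i[f] \leq \frac{1}{n}\sum_j \|\partial_i f - \partial_j f\|^2$. Writing $f = A + B x_i + C x_j + D x_i x_j$, with $A,B,C,D$ multilinear in the remaining coordinates, one has $f - f^{(ij)} = (B-C)(x_i-x_j)$ and $\partial_i f - \partial_j f = (B-C) - D(x_i - x_j)$, where $D = \partial_i \partial_j f$. Since under $\mu_p$ the variables $x_i, x_j$ are independent of each other and of the remaining coordinates, with $\EE[x_i - x_j] = 0$ and $\EE[(x_i-x_j)^2] = 2p(1-p)$, the cross terms vanish and
\[
 \|\partial_i f - \partial_j f\|^2 = \|B-C\|^2 + 2p(1-p)\|\partial_i \partial_j f\|^2, \qquad \|f - f^{(ij)}\|^2 = 2p(1-p) \|B-C\|^2.
\]
Summing over $j$ after substitution gives
\[
 \Infc_i[f] \leq \frac{1}{2p(1-p)} \Infs_i[f] + \frac{2p(1-p)}{n} \Infc[\partial_i f].
\]
The Poincar\'e inequality applied to $\partial_i f$ (of degree at most $d-1$) yields $\Infc[\partial_i f] \leq \frac{d-1}{p(1-p)}\VV[\partial_i f] \leq \frac{d-1}{p(1-p)}\Infc_i[f]$, so the second term is bounded by $\frac{2(d-1)}{n}\Infc_i[f] \leq \frac12 \Infc_i[f]$ for low-degree $f$. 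Rearranging yields $\Infc_i[f] \leq \frac{1}{p(1-p)}\Infs_i[f]$ under $\mu_p$, and Lemma~\ref{lem:l2-invariance} transfers this to any $p$-like $\pi$ up to a factor $1 + O_p(d/\sqrt n)$, producing $\Infc_i[f] \leq O_p(\Infs_i[f])$, which is stronger than the stated inequality.

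The main obstacle is that carrying out the same decomposition directly under $\pi = \nu_{pn}$ does not separate cleanly: conditioning on the remaining coordinates fixes $x_i + x_j$, making $(x_i - x_j)^2$ correlated with $B$, $C$, and $D$, so the identities above pick up unwanted covariance terms. L2-invariance sidesteps this by first reducing everything to the product measure, where coordinate independence dissolves all cross terms; the slack $\frac{d}{n}\VV[f]$ in the statement accommodates, with room to spare, the multiplicative $(1 \pm O_p(d/\sqrt n))$ errors incurred along the way.
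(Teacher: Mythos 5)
Your proof is correct, and it takes a genuinely different route from the paper's. The paper also reduces to $\mu_p$ and transfers back via Lemma~\ref{lem:l2-invariance} (exactly as you do, using that $\partial f/\partial x_i$ and $f-f^{(ij)}$ are again harmonic of degree at most $d$), but its cube-level argument is purely Fourier-analytic and does not use harmonicity at all: it writes $\Infs_{ij}[f]=\sum_{i,j\notin S}(\hat f(S\cup\{i\})-\hat f(S\cup\{j\}))^2$, bounds $p(1-p)\Infc_i[f]\leq\frac{1}{n-d}\sum_{j\neq i}\sum_{i,j\notin S}\hat f(S\cup\{i\})^2$, applies $a^2\leq 2b^2+2(a-b)^2$, and controls the resulting $\sum_{j\neq i}\Infc_j[f]$ by the Poincar\'e inequality --- which is precisely where the $\frac{d}{n}\VV[f]$ term enters. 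You instead exploit harmonicity pointwise on the cube via $\partial_i f=\frac1n\sum_j(\partial_i f-\partial_j f)$, the exact bivariate decomposition $f=A+Bx_i+Cx_j+Dx_ix_j$, and a self-absorption step (Poincar\'e applied to $\partial_i f$, absorbing $\frac{2(d-1)}{n}\Infc_i[f]$ into the left side, legitimate since low degree gives $d\ll n$ for $n\geq N(p)$). The payoff is a strictly stronger conclusion, $\Infc_i[f]=O_p(\Infs_i[f])$ with no variance term, and your bookkeeping (cross terms vanishing under $\mu_p$, harmonicity of $\partial_i f$ and $f-f^{(ij)}$ for the transfer, averaging the $\Infs_{ij}$ comparison over $j$) is sound; the trade-off is that your cube-level step genuinely needs harmonicity, whereas the paper's weaker bound holds for arbitrary multilinear polynomials over $\mu_p$, with harmonicity used only in the transfer. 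Either version suffices for the applications (Lemmas~\ref{lem:similar-slices} and~\ref{lem:invariance}), which only use the stated, weaker form.
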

\begin{proof}
We will show that for the product measure $\pi = \mu_p$ it holds that
\[
 \Infc_i[f] \leq \frac{2d}{p(1-p)(n-d)} \VV[f] + \frac{2n}{p(1-p)(n-d)} \Infs_i[f]
 \]
 which will imply the statement of the lemma by
 Lemma~\ref{lem:l2-invariance}.

 The idea is to come up with an explicit expression for $\Infs_i[f]$. Let $j\neq i$. For $S$ not containing $i,j$ we have
\[
 \omega_S^{(ij)} = \omega_S, \quad
 \omega_{S\cup\{i\}}^{(ij)} = \omega_{S\cup\{j\}}, \quad
 \omega_{S\cup\{j\}}^{(ij)} = \omega_{S\cup\{i\}}, \quad
 \omega_{S\cup\{i,j\}}^{(ij)} = \omega_{S\cup\{i,j\}}.
\]
 Therefore
\[
 \Infs_{ij}[f] = \|f-f^{(ij)}\|^2 = \sum_{i,j \notin S} (\hat{f}(S\cup\{i\}) - \hat{f}(S\cup\{j\}))^2.
\]
 On the other hand, we have
\[
 p(1-p) \Infc_i[f] = \sum_{S\ni i} \hat{f}(S)^2 \leq \frac{1}{n-d} \sum_{S\ni i} \hat{f}(S)^2 (n-|S|) = \frac{1}{n-d} \sum_{j\neq i} \sum_{i,j \notin S} \hat{f}(S\cup\{i\})^2.
\]
 The L2 triangle inequality shows that $\hat{f}(S\cup\{i\})^2 \leq 2\hat{f}(S\cup\{j\})^2 + 2(\hat{f}(S\cup\{i\}) - \hat{f}(S\cup\{j\}))^2$, and so
\begin{align*}
 p(1-p) \Infc_i[f] &\leq
 \frac{2}{n-d} \sum_{j\neq i} \sum_{i,j \notin S} \hat{f}(S\cup\{j\})^2 + \frac{2}{n-d} \sum_{j\neq i} \Infs_{ij}[f] \\ &\leq
 \frac{2p(1-p)}{n-d} \sum_{j\neq i} \Infc_j[f] + \frac{2n}{n-d} \Infs_i[f] \\ &\leq
 \frac{2d}{n-d} \VV[f] + \frac{2n}{n-d} \Infs_i[f],
\end{align*}
 using the Poincar\'e inequality.
 Rearranging, we obtain the statement of the lemma.
\end{proof}

Using Lemma~\ref{lem:derivative-norm}, we can show that the behavior of a low degree function isn't too sensitive to the value of $q$ in $\nu_{qn}$.

\begin{lemma} \label{lem:similar-slices}
Let $f$ be a harmonic multilinear polynomial of low degree $d$, and let $\ell$ be an integer such that $\nu_\ell$ is $p$-like.
For every $C$-Lipschitz functional $\psi$,
\[ |\EE_{x\sim\nu_\ell}[\psi(f(x))] - \EE_{x\sim\nu_{\ell+1}}[\psi(f(x))]| = O_p\left(C\sqrt{\frac{d}{n}\VV[f]_{\nu_{pn}}}\right). \]
\end{lemma}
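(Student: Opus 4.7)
The plan is to construct a single-coordinate coupling between $\nu_{\ell+1}$ and $\nu_\ell$ so that $\psi(f(x))-\psi(f(y))$ is controlled by a single partial derivative of $f$, and then route through cube-influences so that the existing Poincar\'e inequality on the slice finishes the job. Concretely, I would sample $x\sim\nu_{\ell+1}$, pick uniformly at random one of the $\ell+1$ coordinates $i$ with $x_i=1$, and set $y$ to be $x$ with its $i$-th coordinate flipped to $0$; a short symmetry check shows $y\sim\nu_\ell$. Because $f$ is multilinear, $f(x)-f(y)=(\partial f/\partial x_i)(x)$, and this partial derivative does not depend on $x_i$.

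By the Lipschitz assumption followed by Jensen and Cauchy--Schwarz,
\[
 |\EE_{\nu_{\ell+1}}[\psi(f)] - \EE_{\nu_\ell}[\psi(f)]|^2
 \leq C^2\, \EE[(f(x)-f(y))^2]
 = \frac{C^2}{\ell+1}\sum_{i=1}^n \EE_{\nu_{\ell+1}}\!\Bigl[x_i\Bigl(\tfrac{\partial f}{\partial x_i}\Bigr)^{\!2}\Bigr],
\]
and using $x_i\in\{0,1\}$ pointwise the right-hand side is at most $\frac{C^2}{\ell+1}\sum_i \Infc_i[f]_{\nu_{\ell+1}}$.

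Next I would apply Lemma~\ref{lem:derivative-norm} coordinate-wise (valid because $\nu_{\ell+1}$ is $p$-like) and sum:
\[
 \sum_i \Infc_i[f]_{\nu_{\ell+1}}
 = O_p\!\bigl(d\,\VV[f]_{\nu_{\ell+1}} + \Infs[f]_{\nu_{\ell+1}}\bigr)
 = O_p\!\bigl(d\,\VV[f]_{\nu_{\ell+1}}\bigr),
\]
where the last step is the Poincar\'e inequality $\Infs[f]\leq (\deg f)\,\VV[f]$ on the slice. Since $\nu_{\ell+1}$ is $p$-like and $f-\EE_{\nu_{pn}}[f]$ is harmonic, Lemma~\ref{lem:l2-invariance} gives $\VV[f]_{\nu_{\ell+1}}=O(\VV[f]_{\nu_{pn}})$, and $\ell+1=\Theta_p(n)$ by $p$-likeness, so altogether $\EE(f(x)-f(y))^2=O_p(d\,\VV[f]_{\nu_{pn}}/n)$. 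Taking square roots yields the claimed bound.

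The only subtlety I anticipate is that $\partial f/\partial x_i$ is not itself harmonic, so Lemma~\ref{lem:l2-invariance} cannot be applied to it directly; this is avoided by keeping the cube-influence under the single measure $\nu_{\ell+1}$ throughout the coupling step and invoking Lemma~\ref{lem:l2-invariance} only on the harmonic quantity $f-\EE[f]$ at the very end. The verification that the marginal of $y$ is indeed $\nu_\ell$ is a routine double-counting that I would relegate to a one-line aside.
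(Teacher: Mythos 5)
Your proof is correct and is essentially the paper's own argument: your coupling (delete a uniformly random element of $x\sim\nu_{\ell+1}$) is exactly the paper's coupling (add a uniformly random element to $X\sim\nu_\ell$) viewed from the other end, and both proofs then pass from the single-coordinate difference to cube-influences, invoke Lemma~\ref{lem:derivative-norm}, and finish with the slice Poincar\'e inequality together with Lemma~\ref{lem:l2-invariance} and the Lipschitz/Cauchy--Schwarz step. The only cosmetic difference is that you evaluate influences and variance under $\nu_{\ell+1}$ rather than $\nu_\ell$, so strictly one should note that $\nu_{\ell+1}$ is $p$-like up to a harmless additive $1/n$ (or condition on the smaller set, as the paper does, and work under $\nu_\ell$).
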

\begin{proof}
Let $q = \ell/n$, which is $p$-like.
For $i \in [n]$, let $(X^i,Y^i)$ be the distribution obtained by choosing a random $X^i \in \binom{[n] \setminus \{i\}}{\ell}$ and setting $Y^i = X^i \cup \{i\}$.
Note that $f(X^i) - f(Y^i) = (f-f^{[i]})(X^i)$.
Since $\Pr_{\nu_\ell}[x_i = 0] = 1-q$, we have
\[
\EE[(f(X^i) - f(Y^i))^2] \leq (1-q)^{-1} \Infc_i[f]_{\nu_\ell} = O_p\left(\frac{d}{n}\VV[f]_{\nu_{pn}} + \Infs_i[f]_{\nu_{pn}}\right),
\]
using Lemma~\ref{lem:derivative-norm} and Lemma~\ref{lem:l2-invariance}.

Consider now the distribution $(X,Y)$ supported on $\binom{[n]}{\ell} \times \binom{[n]}{\ell+1}$ obtained by taking $X\sim\nu_\ell$ and choosing $Y \supset X$ uniformly among the $n-\ell$ choices; note that $Y\sim\nu_{\ell+1}$. Since $(X,Y)$ is a uniform mixture of the distributions $(X^i,Y^i)$, we deduce
\begin{align*}
\EE[(f(X) - f(Y))^2] &\leq  O_p\left(\frac{d}{n}\VV[f]_{\nu_{pn}} + \frac{1}{n} \Infs[f]_{\nu_{pn}}\right) \\ &\leq
O_p\left(\frac{d}{n} \VV[f]_{\nu_{pn}}\right),
\end{align*}
using the Poincar\'e inequality $\Infs[f] \leq d\VV[f]$ (see Section~\ref{sec:analysis}). The lemma now follows along the lines of Lemma~\ref{lem:lipschitz-cs}.
\end{proof}

We now apply a variant of the invariance principle for Lipschitz functionals due to Isaksson and Mossel.

\begin{proposition}[{\cite[Theorem 3.4]{IM}}] \label{pro:invariance-IM}
 Let $Q_1,\ldots,Q_k$ be $n$-variate multilinear polynomials of degree at most $d$ such that with respect to $\mu_p$, $\VV[F_i] \leq 1$ and $\Infc_j[F_i] \leq \tau$ for all $i \in [k]$ and $j \in [n]$. For any $C$-Lipschitz functional $\Psi\colon\RR^k\to\RR$ (i.e., a function satisfying $|\Psi(x)-\Psi(y)| \leq C\|x-y\|_2$),
\[ |\EE_{\mu_p}[\Psi(Q_1,\ldots,Q_k)] - \EE_{\cG_p}[\Psi(Q_1,\ldots,Q_k)]| = O_k(C\rho_p^d\tau^{1/6}), \]
 for some (explicit) constant $\rho_p \geq 1$.
\end{proposition}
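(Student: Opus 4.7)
The plan is to follow the Lindeberg replacement strategy of Mossel--O'Donnell--Oleszkiewicz, adapted to the vector-valued setting with a Lipschitz test functional. First I would smooth $\Psi$ by mollification: convolve with a rescaled Gaussian kernel at scale $\lambda$ to obtain $\Psi_\lambda\in C^\infty$ with $\|\Psi - \Psi_\lambda\|_\infty = O(C\lambda)$ and, via $\nabla^3(\Psi*\phi_\lambda) = (\nabla\Psi)*(\nabla^2\phi_\lambda)$ together with the scalar estimate $\int\|\nabla^2\phi_\lambda(y)\|_{\mathrm{op}}\,dy = O_k(\lambda^{-2})$, third-derivative operator-norm bounded by $O_k(C/\lambda^2)$. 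This reduces the task to comparing $\EE_{\mu_p}[\Psi_\lambda\circ Q]$ and $\EE_{\cG_p}[\Psi_\lambda\circ Q]$ at an additive cost of $O(C\lambda)$.

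For the smooth comparison I would carry out a one-coordinate hybrid: letting $Z^{(j)}$ have its first $j$ entries drawn from $\cG_p$ and the remaining $n-j$ from $\mu_p$, I would estimate $\EE[\Psi_\lambda(Q(Z^{(j)}))-\Psi_\lambda(Q(Z^{(j-1)}))]$ by Taylor expansion to third order in the $j$th coordinate. Since each $Q_i$ is multilinear we may write $Q_i(z) = R_jQ_i(z_{-j}) + z_j\,\partial_j Q_i(z_{-j})$, and the matching first and second moments of $\mu_p$ and $\cG_p$ (both mean $p$, variance $p(1-p)$) annihilate the zeroth-, first-, and second-order contributions. What remains is a cubic remainder controlled by $\|\nabla^3\Psi_\lambda\|_{\mathrm{op}}$ times the third absolute moment of the $\RR^k$-valued difference $Q(Z^{(j)}) - R_jQ(Z^{(j-1)}_{-j})$.

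To convert the third moments into influences I would invoke $(2,3)$-hypercontractivity on each degree-$d$ multilinear polynomial $(z_j - p)\partial_j Q_i$ with respect to the ambient mixed product measure; by tensorization one can take a single hypercontractive constant $\eta_p \geq 1$ that dominates both the Bernoulli-$p$ and Gaussian constants. This yields $\EE[|Q_i - R_jQ_i|^3] \leq \eta_p^{3d}(p(1-p)\Infc_j[Q_i])^{3/2}$. Summing over $j$ and using $\Infc_j[Q_i] \leq \tau$ together with the Poincar\'e bound $\sum_j \Infc_j[Q_i] = O_p(d\VV[Q_i]) = O_p(d)$ gives $\sum_j \EE[|Q_i - R_jQ_i|^3] = O_p(\eta_p^{3d}\,d\,\sqrt{\tau})$. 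Aggregating over the $k$ output components (via Cauchy--Schwarz on the tensorial cubic remainder) and balancing the resulting Taylor error $O_{k,p}(C\eta_p^{3d}d\sqrt{\tau}/\lambda^2)$ against $C\lambda$ by the choice $\lambda \asymp d^{1/3}\eta_p^d\tau^{1/6}$ delivers the claimed $O_k(C\rho_p^d\tau^{1/6})$ bound, with the polynomial-in-$d$ factor absorbed into $\rho_p^d$.

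The main obstacle I anticipate is the careful execution of hypercontractivity along the mixed hybrid. At step $j$ the ambient distribution is a tensor product of $j-1$ Gaussian factors, $n-j$ Bernoulli-$p$ factors, and the one coordinate being swapped, and one must verify that a single constant $\eta_p$ governs the $(2,3)$-hypercontractive estimate for the degree-$d$ multilinear polynomial $(z_j-p)\partial_j Q_i$ uniformly across all $n+1$ such mixed product measures. A secondary subtlety is the dimensional scaling in the mollification: the third-derivative operator-norm estimate inevitably carries a $k$-dependence that one must quarantine inside the $O_k(\cdot)$ factor rather than allowing it to contaminate the $\rho_p^d\tau^{1/6}$ part of the bound, which is feasible but warrants care because the cubic remainder is genuinely tensorial in $\RR^k$.
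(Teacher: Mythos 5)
The paper does not actually prove this proposition---it is imported verbatim as \cite[Theorem 3.4]{IM}---and your mollification-plus-Lindeberg-replacement-plus-hypercontractivity sketch is essentially the argument by which Isaksson and Mossel (following Mossel--O'Donnell--Oleszkiewicz) establish that cited result, so your route coincides with the intended one and the bookkeeping (per-swap cubic error $\eta_p^{3d}(p(1-p)\Infc_j[Q_i])^{3/2}$, summation via $\sum_j \Infc_j \leq O_p(d)$ and $\max_j \Infc_j \leq \tau$, then balancing $C\lambda$ against $C\eta_p^{3d}d\sqrt{\tau}/\lambda^2$ with $\lambda \asymp d^{1/3}\eta_p^d\tau^{1/6}$, absorbing $d^{1/3}$ into $\rho_p^d$) is correct. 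One cosmetic slip: having written $Q_i(z)=R_jQ_i(z_{-j})+z_j\,\partial_jQ_i(z_{-j})$ you should Taylor-expand around the base point with $z_j$ replaced by its mean $p$, so that the increment whose third moment you control is $(z_j-p)\,\partial_jQ_i(z_{-j})$ (as you indeed use later), whose squared L2 norm is $p(1-p)\Infc_j[Q_i]$; with $z_j$ in place of $z_j-p$ the first- and second-order terms would not cancel and the L2 norm would not equal $p(1-p)\Infc_j[Q_i]$.
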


\begin{lemma} \label{lem:invariance}
 Denote
\[ S = \frac{\sum_{i=1}^n x_i-np}{\sqrt{p(1-p)n}}. \]
 Let $f$ be a harmonic multilinear polynomial of low degree $d \geq 1$ such that with respect to $\mu_p$, $\EE[f] = 0$, $\VV[f] \leq 1$ and $\Infs_i[f] \leq \tau$ for all $i \in [n]$. Suppose that $\tau \leq R_p^{-d}$ and $n \geq R_p^d$, for some constant $R_p$.
 For any $C$-Lipschitz functional $\psi$ such that $\psi(0) = 0$ and $B$-Lipschitz functional $\phi$ (where $B \geq 1$) satisfying $\|\phi\|_\infty \leq 1$,
\[
 |\EE_{\mu_p}[\psi(f)\phi(S)] - \EE_{\cG_p}[\psi(f)\phi(S)]| = CO_p\left(\sqrt{B}\rho_p^{d/2}\left(\tau + \frac{d}{n}\right)^{1/12}\right).
\]
 The condition $\Infs_i[f] \leq \tau$ for all $i \in [n]$ can be replaced by the condition $\Infc_i[f]_{\mu_p} \leq \tau$ for all $i \in [n]$.
\end{lemma}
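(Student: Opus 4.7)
The plan is to apply the multidimensional invariance principle of Proposition~\ref{pro:invariance-IM} jointly to $f$ and $S$ with the product functional $\Psi(u,v) = \psi(u)\phi(v)$. The principal obstacle is that while $\phi$ is bounded, $\psi$ is only Lipschitz and hence unbounded, so $\Psi$ is not globally Lipschitz on $\RR^2$. I will overcome this by truncating $\psi$ at a threshold $T$, then balancing the truncation error against the invariance error in $T$ at the end.

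First, I convert the slice-influence hypothesis into a cube-influence hypothesis: by Lemma~\ref{lem:derivative-norm} together with Lemma~\ref{lem:l2-invariance}, $\Infc_i[f]_{\mu_p} = O_p(\tau + d/n)$ for every $i$ (if one starts instead from the alternative cube-influence hypothesis, this step is skipped). The linear form $S$ satisfies $\VV[S]_{\mu_p} = 1$ and $\Infc_i[S]_{\mu_p} = 1/n$, so both $f$ and $S$ meet the hypotheses of Proposition~\ref{pro:invariance-IM} with common influence bound $\tau' = O_p(\tau + d/n)$ and unit variance; Lemma~\ref{lem:l2-invariance} also yields $\|f\|_{\cG_p}^2 = O(1)$, which I need below. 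Now define $\psi_T(u) = \sgn(\psi(u))\min(|\psi(u)|, T)$, which is still $C$-Lipschitz, satisfies $|\psi_T| \leq T$, and obeys $|\psi(u) - \psi_T(u)| \leq C|u|\charf{|u| > T/C}$ because $|\psi(u)| \leq C|u|$ (using $\psi(0)=0$). A Cauchy--Schwarz/Markov computation then gives, for $\pi \in \{\mu_p,\cG_p\}$,
\[
|\EE_\pi[\psi(f)\phi(S)] - \EE_\pi[\psi_T(f)\phi(S)]| \leq C\,\EE_\pi[|f|\charf{|f|>T/C}] \leq \frac{C^2\|f\|_\pi^2}{T} = O_p\!\left(\frac{C^2}{T}\right).
\]
Setting $\Psi_T(u,v) = \psi_T(u)\phi(v)$, the bounds $|\psi_T| \leq T$ and $\|\phi\|_\infty \leq 1$ give $|\Psi_T(u,v) - \Psi_T(u',v')| \leq C|u-u'| + TB|v-v'|$, so $\Psi_T$ is $(C+TB)$-Lipschitz on $\RR^2$.

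Proposition~\ref{pro:invariance-IM} applied to $(Q_1,Q_2) = (f,S)$ with $\Psi_T$ yields
\[
|\EE_{\mu_p}[\Psi_T(f,S)] - \EE_{\cG_p}[\Psi_T(f,S)]| = O_p\!\left((C+TB)\rho_p^d(\tau + d/n)^{1/6}\right).
\]
Combining with the truncation estimate by the triangle inequality gives a total error of $O_p\bigl(C^2/T + (C+TB)\rho_p^d(\tau+d/n)^{1/6}\bigr)$. Choosing $T = C\rho_p^{-d/2}B^{-1/2}(\tau+d/n)^{-1/12}$ balances the $C^2/T$ and $TB\rho_p^d(\tau+d/n)^{1/6}$ contributions, each equal to $CB^{1/2}\rho_p^{d/2}(\tau+d/n)^{1/12}$. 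The residual $C\rho_p^d(\tau+d/n)^{1/6}$ term from the $C$ part of $C+TB$ is absorbed into the main term precisely when $\rho_p^{d/2}(\tau+d/n)^{1/12} = O(1)$; choosing $R_p$ as a sufficiently large power of $\rho_p$ (e.g.\ $R_p = 2\rho_p^6$) makes the hypotheses $\tau \leq R_p^{-d}$ and $n \geq R_p^d$ supply exactly this, yielding the stated bound. The main technical delicacy is ensuring the exponents line up in the balance, and ensuring the smallness hypotheses on $\tau$ and $1/n$ suffice to kill the stray $C\rho_p^d(\tau+d/n)^{1/6}$ term; the rest is a direct invocation of known tools.
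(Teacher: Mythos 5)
Your proposal is correct and follows essentially the same route as the paper's proof: convert slice-influences to cube-influences via Lemma~\ref{lem:derivative-norm}, truncate $\psi$ at a level $T$, apply Proposition~\ref{pro:invariance-IM} to $(f,S)$ with the product functional whose Lipschitz constant is $C+TB$, and balance the truncation error $C^2/T$ against the invariance error with exactly the same choice of threshold, using $\tau \leq R_p^{-d}$, $n \geq R_p^d$, and $B \geq 1$ to absorb the stray $C\rho_p^d(\tau+d/n)^{1/6}$ term. (One immaterial slip: $\Infc_i[S]_{\mu_p} = 1/(p(1-p)n)$ rather than $1/n$, which is absorbed into the $O_p$ anyway.)
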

\begin{proof}
 For $M$ to be chosen later, define
\[
 \tilde\psi(x) = \begin{cases} -M & \text{if } \psi(x) \leq -M, \\ \psi(x) & \text{if } -M \leq \psi(x) \leq M, \\ M & \text{if } M \leq \psi(x). \end{cases}
\]
 It is not hard to check that $\tilde\psi$ is also $C$-Lipschitz.

 We are going to apply Proposition~\ref{pro:invariance-IM} with $Q_1 = f$, $Q_2 = S/\sqrt{p(1-p)n}$, and $\Psi(y_1,y_2) = \tilde\psi(y_1) \phi(y_2)$.
 With respect to $\mu_p$, $\VV[Q_2] = 1$ and $\Infc_i[Q_2] = 1/(p(1-p)n)$ for all $i \in [n]$.  Lemma~\ref{lem:derivative-norm} shows that $\Infc_i[f] = O_p(\frac{d}{n} + \tau)$, and so the cube-influences of $Q_1,Q_2$ are bounded by $O_p(\tau + \frac{d}{n})$.
 Since
\begin{multline*}
 |\Psi(y_1,y_2) - \Psi(z_1,z_2)| \leq |\Psi(y_1,y_2) - \Psi(y_1,z_2)| + |\Psi(y_1,z_2) - \Psi(z_1,z_2)| \leq \\ MB|y_2-z_2|+C|y_1-z_1|,
\end{multline*}
 we see that $\Psi$ is $(MB+C)$-Lipschitz. Therefore
\[
 |\EE_{\mu_p}[\tilde\psi(f)\phi(S)] - \EE_{\cG_p}[\tilde\psi(f)\phi(S)]| = O_p\left((MB+C)\rho_p^d\left(\tau + \frac{d}{n}\right)^{1/6}\right).
\]
 Next, we want to replace $\tilde\psi$ with $\psi$. For $\pi \in \{\mu_p,\cG_p\}$ we have
\begin{multline*}
 |\EE_\pi[\tilde\psi(f)\phi(S)] - \EE_\pi[\psi(f)\phi(S)]| \leq
 \EE_\pi[|\psi(f)|\,|\phi(S)|\charf{|\psi(f)| \geq M}] \leq \\
 C\EE_\pi[|f|\charf{|f| \geq M/C}] \leq
 \frac{C^2}{M}\EE_\pi[|f|^2] \leq \frac{C^2}{M}.
\end{multline*}
 Therefore
\[
 |\EE_{\mu_p}[\psi(f)\phi(S)] - \EE_{\cG_p}[\psi(f)\phi(S)]| = O_p\left((MB+C)\rho_p^d\left(\tau + \frac{d}{n}\right)^{1/6} + \frac{C^2}{M}\right).
\]
 Choosing $M = C/\sqrt{B\rho_p^d\left(\tau + \frac{d}{n}\right)^{1/6}}$ completes the proof. The conditions on $\tau,n$ guarantee that $\rho_p^d(\tau+\frac{d}{n})^{1/6} \leq 1$, and so $B \geq 1$ allows us to obtain the stated error bound.
\end{proof}

In order to finish the proof, we combine Lemma~\ref{lem:invariance} with Lemma~\ref{lem:similar-slices}.

\begin{theorem} \label{thm:invariance}
 Let $f$ be a harmonic multilinear polynomial of degree $d$ such that with respect to $\nu_{pn}$, $\VV[f] \leq 1$ and $\Infs_i[f] \leq \tau$ for all $i \in [n]$.
 Suppose that $\tau \leq I_p^{-d}\delta^K$ and $n \geq I_p^d/\delta^K$, for some constants $I_p,K$.
 For any $C$-Lipschitz functional $\psi$ and for $\pi \in \{\cG_p,\mu_p\}$,
\[
 |\EE_{\nu_{pn}}[\psi(f)] - \EE_{\pi}[\psi(f)]| = O_p(C\delta).
\]
 The condition $\Infs_i[f] \leq \tau$ for all $i \in [n]$ can be replaced by the condition $\Infc_i[f]_{\mu_p} \leq \tau$ for all $i \in [n]$.
\end{theorem}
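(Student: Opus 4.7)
The plan is to formalize the sketch in the excerpt, tracking all Lipschitz constants carefully. First, since $f$ is harmonic, Lemma~\ref{lem:harmonic-expansion} gives $\EE_{\nu_{pn}}[f] = \EE_{\mu_p}[f] = \EE_{\cG_p}[f]$, so by replacing $\psi$ with $\psi(\cdot + \EE f) - \psi(\EE f)$ I may assume $\EE[f]=0$ under all three measures; Corollary~\ref{cor:l2-invariance-harmonic} then transfers the variance bound from $\nu_{pn}$ to $\mu_p$ and $\cG_p$. If the hypothesis is on slice-influences I convert it into a cube-influence bound via Lemma~\ref{lem:derivative-norm}, obtaining $\Infc_i[f]_{\mu_p} = O_p(d/n + \tau)$, which is what Lemma~\ref{lem:invariance} ingests; otherwise I use the cube-influence hypothesis directly.

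Next, for a parameter $\sigma \in (0,1)$ to be chosen, I let $\phi \colon \RR \to [0,1]$ be a $(1/\sigma)$-Lipschitz trapezoid equal to $1$ on $[-\sigma,\sigma]$ and supported on $[-2\sigma,2\sigma]$, and set $S = (\sum_i x_i - pn)/\sqrt{p(1-p)n}$. On the Gaussian side the key rigidity is Lemma~\ref{lem:harmonic-gauss}: conditioning $\cG_p$ on $\sum_i x_i = qn$ produces $\gamma_{p,q}$, under which $f$ has exactly the same distribution as under $\cG_p$. Hence $f$ is \emph{independent} of $S$ under $\cG_p$, yielding the factorization
\[
\EE_{\cG_p}[\psi(f)\phi(S)] \;=\; \EE_{\cG_p}[\psi(f)]\,\EE_{\cG_p}[\phi(S)].
\]
On the $\mu_p$ side, I decompose by Hamming weight $k$ and telescope $O(\sigma\sqrt{n})$ applications of Lemma~\ref{lem:similar-slices}: every slice $\nu_k$ in the support of $\phi(S_k)$ is $p$-like, and
\[
\bigl|\EE_{\nu_k}[\psi(f)] - \EE_{\nu_{pn}}[\psi(f)]\bigr| \;=\; O_p\!\left(C\,|k-pn|\sqrt{d/n}\right) \;=\; O_p(C\sigma\sqrt{d}),
\]
so averaging against $\phi(S)$ yields $\EE_{\mu_p}[\psi(f)\phi(S)] = \EE_{\mu_p}[\phi(S)]\,\EE_{\nu_{pn}}[\psi(f)] \pm O_p(C\sigma\sqrt{d})\,\EE_{\mu_p}[\phi(S)]$.

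The remaining ingredient is Lemma~\ref{lem:invariance} applied to the Lipschitz product $\psi(f)\phi(S)$ with $B = 1/\sigma$, giving
\[
\bigl|\EE_{\mu_p}[\psi(f)\phi(S)] - \EE_{\cG_p}[\psi(f)\phi(S)]\bigr| \;=\; C \cdot O_p\!\left(\sigma^{-1/2}\rho_p^{d/2}(\tau + d/n)^{1/12}\right),
\]
while Berry--Esseen on the standardized sum $S$ shows $\EE_{\mu_p}[\phi(S)] = \EE_{\cG_p}[\phi(S)] \pm O_p(1/\sqrt n) = \Theta_p(\sigma)$ provided $\sigma \gg 1/\sqrt n$. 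Equating the two expressions for $\EE_\pi[\psi(f)\phi(S)]$ and dividing by $\Theta_p(\sigma)$ produces
\[
\bigl|\EE_{\nu_{pn}}[\psi(f)] - \EE_{\cG_p}[\psi(f)]\bigr| \;=\; O_p\!\left(C\sigma\sqrt{d} + \frac{C\rho_p^{d/2}(\tau+d/n)^{1/12}}{\sigma^{3/2}} + \frac{C}{\sigma\sqrt{n}}\right).
\]
Balancing the first two terms with $\sigma = \Theta_p\!\bigl(\rho_p^{d/5}(\tau+d/n)^{1/30} d^{-1/5}\bigr)$ makes the overall error $O_p(C\,d^{3/10}\rho_p^{d/5}(\tau+d/n)^{1/30})$, which is $O_p(C\delta)$ under the hypotheses $\tau \leq I_p^{-d}\delta^K$ and $n \geq I_p^d/\delta^K$ once $I_p$ is a sufficiently large multiple of $\rho_p$ and $K$ is a universal constant (e.g., $K = 30$). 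The comparison with $\mu_p$ follows by the same argument, or directly from Lemma~\ref{lem:invariance} applied to $\psi(f)$ alone combined with the triangle inequality.

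The main obstacle will be the bookkeeping of Lipschitz factors: the bump $\phi$ has Lipschitz norm $1/\sigma$ but normalization $\EE[\phi(S)] = \Theta_p(\sigma)$, and dividing through costs a power of $\sigma$, so the two principal error terms must be balanced very carefully and $\phi$ chosen with just enough smoothness. A secondary subtlety is that all slices visited in the telescoping argument must remain $p$-like, which forces $\sigma = o(1)$; this is automatically granted by the stated parameter regime.
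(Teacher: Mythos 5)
Your proposal is correct and follows essentially the same route as the paper's proof: reduce to $\EE[f]=0$ and cube-influence bounds via Lemma~\ref{lem:derivative-norm}, localize with a $(1/\sigma)$-Lipschitz bump in $S$, factor the Gaussian side using Lemma~\ref{lem:harmonic-gauss}, telescope Lemma~\ref{lem:similar-slices} on the $\mu_p$ side, compare the two sides with Lemma~\ref{lem:invariance}, divide by $\EE[\phi(S)]=\Theta_p(\sigma)$, and balance $\sigma$. The only deviations are cosmetic (a trapezoid instead of a triangle bump, and Berry--Esseen in place of another application of Proposition~\ref{pro:invariance-IM} to compare $\EE_{\mu_p}[\phi(S)]$ with $\EE_{\cG_p}[\phi(S)]$, which in fact gives a slightly better error for that term).
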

\begin{proof}
 We prove the theorem for $\pi = \cG_p$. The version for $\mu_p$ then follows from the classical invariance principle, using Lemma~\ref{lem:derivative-norm}.

 Replacing $f$ with $f - \EE[f]$ (recall that the expectation of $f$ is the same with respect to both $\mu_p$ and $\pi$) doesn't change the variance and influences of $f$, so we can assume without loss of generality that $\EE[f] = 0$. Similarly, we can replace $\psi$ with $\psi - \psi(0)$ without affecting the quantity $\EE_{\nu_{pn}}[\psi(f)] - \EE_{\mu_p}[\psi(f)]$, and so we can assume without loss of generality that $\psi(0) = 0$.

 For a parameter $\sigma \leq 1$ to be chosen later, define a function $\phi$ supported on $[-\sigma,\sigma]$ by
\[
 \phi(x) =
 \begin{cases}
 1 + x/\sigma & \text{if } -\sigma \leq x \leq 0, \\
 1 - x/\sigma & \text{if } 0 \leq x \leq \sigma.
 \end{cases}
\]
 Note that $\|\phi\|_\infty = 1$ and that $\phi$ is $(1/\sigma)$-Lipschitz. Lemma~\ref{lem:invariance} (together with Lemma~\ref{lem:l2-invariance}) shows that
\begin{align*}
 |\EE_{\mu_p}[\psi(f)\phi(S)] - \EE_{\cG_p}[\psi(f)\phi(S)]| &= CO_p\left(\sigma^{-1/2}\rho_p^{d/2}\left(\tau + \frac{d}{n}\right)^{1/12}\right) \\ &=
 CO_p(\sigma^{-1/2}\rho_p^{d/2}(\tau^{1/12} + n^{-1/24})),
\end{align*}
 assuming $\tau \leq R_p^{-d}$ and $n \geq R_p^d$ (the condition on $n$ implies that $d$ is low degree).

 Let $\alpha$ be the distribution of $x_1+\cdots+x_n$ under $\cG_p$. Lemma~\ref{lem:gamma-conditioning} and Lemma~\ref{lem:harmonic-gauss} show that
 \begin{align*}
  \EE_{\cG_p}[\psi(f)\phi(S)] &= \EE_{q\sim\alpha}\big[\EE_{\gamma_{p,q}}[\psi(f)] \phi(\tfrac{q-np}{\sqrt{p(1-p)n}})\big] \\ &=
  \EE_{\cG_p}[\psi(f)] \EE_{q\sim\alpha}[\phi(\tfrac{q-np}{\sqrt{p(1-p)n}})] =
  \EE_{\cG_p}[\psi(f)] \EE_{\cG_p}[\phi(S)].
 \end{align*}
 Similarly, Lemma~\ref{lem:similar-slices} shows that
\begin{align*}
 &|\EE_{\mu_p}[\psi(f)\phi(S)] - \EE_{\nu_{np}}[\psi(f)] \EE_{\mu_p}[\phi(S)]| \\ \leq
 &\sum_{|k-np| \leq \sigma\sqrt{p(1-p)n}} \Pr_{\mu_p}[S=\tfrac{k-np}{\sqrt{p(1-p)n}}] \phi(\tfrac{k-np}{\sqrt{p(1-p)n}}) |\EE_{\nu_k}[\psi(f)] - \EE_{\nu_{pn}}[\psi(f)]| \\ \leq
 &\sum_{|k-np| \leq \sigma\sqrt{p(1-p)n}} \Pr_{\mu_p}[S=\tfrac{k-np}{\sqrt{p(1-p)n}}] \phi(\tfrac{k-np}{\sqrt{p(1-p)n}}) |k-np| O_p\left(C\sqrt{\frac{d}{n}}\right) \\ \leq
 &\EE_{\mu_p}[\phi(S)] O_p(C\sigma\sqrt{d}).
\end{align*}
 Therefore
\begin{multline*}
 |\EE_{\cG_p}[\psi(f)] \EE_{\cG_p}[\phi(S)] - \EE_{\nu_{np}}[\psi(f) \EE_{\mu_p}[\phi(S)]| \leq \\ O_p(C\sigma^{-1/2}\rho_p^{d/2}(\tau^{1/12} + n^{-1/24})) +
 O_p(C\sigma\sqrt{d} \EE_{\mu_p}[\phi(S)]).
\end{multline*}

 Proposition~\ref{pro:invariance-IM} shows that
\[
 |\EE_{\cG_p}[\phi(S)] - \EE_{\mu_p}[\phi(S)]| = O_p(\sigma^{-1/2}n^{-1/6}).
\]
 Moreover, $\EE_{\cG_p}[\psi(f)] \leq C\EE_{\cG_p}[|f|] = O_p(C)$. It follows that
\[
 \EE_{\mu_p}[\phi(S)] |\EE_{\cG_p}[\psi(f)] - \EE_{\nu_{np}}[\psi(f)]| \leq O_p(C\sigma^{-1/2}\rho_p^{d/2}(\tau^{1/12} + n^{-1/24})) + O_p(C\sigma\sqrt{d} \EE_{\mu_p}[\phi(S)]).
\]
 It is not hard to check that $\EE_{\cG_p}[\phi(S)] = \Theta_p(\sigma)$, and so for $n \geq A_p \sigma^{-9}$ we have $\EE_{\mu_p}[\phi(S)] = \Theta_p(\sigma)$, implying
\[
 |\EE_{\cG_p}[\psi(f)] - \EE_{\nu_{np}}[\psi(f)]| \leq O_p(C\sigma^{-3/2}\rho_p^{d/2}(\tau^{1/12} + n^{-1/24}) + C\sigma\sqrt{d}).
\]
 Choosing $\sigma = \rho_p^{d/5}(\tau^{1/12} + n^{-1/24})^{2/5}/d^{1/5}$, we obtain
\[
 |\EE_{\cG_p}[\psi(f)] - \EE_{\nu_{np}}[\psi(f)]| \leq O_p(C\rho_p^{d/5}(\tau^{1/30} + n^{-1/60})d^{3/10}).
\]
 It is not hard to check that if $d \leq B_p n^\beta$ and $n \geq M_p$ then $n \geq A_p \sigma^{-9}$, and that if $\tau,n^{-1} \leq \rho_p^{-\gamma d}$ then $\sigma \leq 1$; these are the conditions necessary for our estimate to hold. In fact, for an appropriate choice of $\gamma_p \geq \gamma$, the condition $n \geq \rho_p^{\gamma_p d}$ implies the condition $d \leq B_p n^\beta$, and furthermore allows us to estimate $n^{-1/60} d^{3/10} = O_p(n^{-1/70})$ (say), and to control the other error term similarly. This completes the proof of the theorem.
\end{proof}

\subsection{Corollaries} \label{sec:invariance-cor}

Theorem~\ref{thm:invariance} allows us to bound the L\'evy distance between the distribution of a low degree polynomial with respect to $\nu_{pn}$ and the distribution of the same polynomial with respect to $\cG_p$ or $\mu_p$. This is the analog of~\cite[Theorem 3.19(28)]{MOO}.

\begin{corollary} \label{cor:levy-distance}
 Let $f$ be a harmonic multilinear polynomial of degree $d$ such that with respect to $\nu_{pn}$, $\VV[f] \leq 1$ and $\Infs_i[f] \leq \tau$ for all $i \in [n]$.
 There are parameters $X_p,X$ such that for any $0 < \epsilon < 1/2$, if $\tau \leq X_p^{-d}\epsilon^X$ and $n \geq X_p^d/\epsilon^X$ then the L\'evy distance between $f(\nu_{pn})$ and $f(\pi)$ is at most $\epsilon$, for $\pi \in \{\cG_p,\mu_p\}$. In other words, for all $\sigma$,
\[
 \Pr_{\nu_{pn}}[f \leq \sigma - \epsilon] - \epsilon \leq \Pr_{\pi}[f \leq \sigma] \leq \Pr_{\nu_{pn}}[f \leq \sigma + \epsilon] + \epsilon.
\]
\end{corollary}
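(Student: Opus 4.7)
The plan is to derive the L\'evy-distance bound from Theorem~\ref{thm:invariance} by applying the invariance principle to a Lipschitz approximation of the step function $\charf{x \leq \sigma}$, mimicking the way Mossel--O'Donnell--Oleszkiewicz pass from a smooth invariance principle to a CDF-style bound in~\cite{MOO}.

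For each $\sigma \in \RR$ I would introduce the ramp
\[ \psi_\sigma(x) = \begin{cases} 1 & x \leq \sigma, \\ 1 - (x-\sigma)/\epsilon & \sigma \leq x \leq \sigma + \epsilon, \\ 0 & x \geq \sigma + \epsilon, \end{cases} \]
which is $(1/\epsilon)$-Lipschitz and satisfies the sandwich $\charf{x \leq \sigma} \leq \psi_\sigma(x) \leq \charf{x \leq \sigma + \epsilon}$. Applying Theorem~\ref{thm:invariance} with $C = 1/\epsilon$ and a parameter $\delta$ to be chosen yields
\[ |\EE_{\nu_{pn}}[\psi_\sigma(f)] - \EE_\pi[\psi_\sigma(f)]| = O_p(\delta/\epsilon), \]
and chaining with the sandwich gives
\[ \Pr_\pi[f \leq \sigma] \leq \EE_\pi[\psi_\sigma(f)] \leq \EE_{\nu_{pn}}[\psi_\sigma(f)] + O_p(\delta/\epsilon) \leq \Pr_{\nu_{pn}}[f \leq \sigma + \epsilon] + O_p(\delta/\epsilon). \]
The symmetric chain with $\psi_{\sigma-\epsilon}$ in place of $\psi_\sigma$ delivers the matching lower bound $\Pr_{\nu_{pn}}[f \leq \sigma - \epsilon] - O_p(\delta/\epsilon) \leq \Pr_\pi[f \leq \sigma]$.

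To force the error to be at most $\epsilon$, I would set $\delta = c_p \epsilon^2$ with $c_p$ absorbing the implicit $O_p$ constant. Substituting into the hypotheses $\tau \leq I_p^{-d}\delta^K$ and $n \geq I_p^d/\delta^K$ of Theorem~\ref{thm:invariance} converts them into bounds of the required shape $\tau \leq X_p^{-d}\epsilon^X$ and $n \geq X_p^d/\epsilon^X$, with $X = 2K$ and $X_p$ chosen to swallow $I_p$ together with the constant factor $c_p^K$ (the case $d=0$ is trivial since then $f$ is a constant and both CDFs agree).

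The argument is essentially bookkeeping once Theorem~\ref{thm:invariance} is in hand. The only mild subtlety is keeping the $p$-dependence uniform, so that the implicit constants appearing in the invariance principle, in the sandwich step, and in the final parameter scaling can be absorbed consistently into a single pair $(X_p, X)$; this is exactly the reason Theorem~\ref{thm:invariance} was phrased with a free parameter $\delta$ and polynomial tradeoffs between $\tau$, $n$, and $\delta$.
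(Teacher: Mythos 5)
Your proposal is correct and follows essentially the same route as the paper: the paper's proof uses exactly this $(1/\epsilon)$-Lipschitz ramp interpolating between $\charf{f \leq \sigma}$ and $\charf{f \leq \sigma+\epsilon}$ (written in mirrored form), applies Theorem~\ref{thm:invariance} to get an $O_p(\delta/\epsilon)$ error, obtains the reverse inequality symmetrically, and sets $\delta = c_p\epsilon^2$ to fold the hypotheses into the stated $(X_p,X)$ form. Your bookkeeping of the parameter translation (and the trivial $d=0$ case) matches the intended argument.
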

\begin{proof}
 Given $\sigma$ and $\epsilon$, define a function $\psi$ by
\[
 \psi(x) =
 \begin{cases}
  0 & \text{if } x \leq \sigma, \\
  \frac{x-\sigma}{\epsilon} & \text{if } \sigma \leq x \leq \sigma + \epsilon, \\
  1 & \text{if } x \geq \sigma + \epsilon.
 \end{cases}
\]
 Note that $\psi$ is $(1/\epsilon)$-Lipschitz. Theorem~\ref{thm:invariance} shows that if $\tau \leq I_p^{-d}\delta^K$ and $n \geq I_p^d/\delta^k$,
\[
 \Pr_\pi[f \leq \sigma] - \Pr_{\nu_{pn}}[f \leq \sigma + \epsilon] \leq
 \EE_\pi[\psi(f)] - \EE_{\nu_{pn}}[\psi(f)] = O_p(\delta/\epsilon).
\]
 We can similarly get a bound in the other direction. To complete the proof, choose $\delta = c_p \epsilon^2$ for an appropriate $c_p$.
\end{proof}

Using the Carbery--Wright theorem, we can bound the actual CDF distance. This is the analog of~\cite[Theorem 3.19(30)]{MOO}.

\begin{proposition}[Carbery--Wright] \label{pro:carbery-wright}
 Let $f$ be a polynomial of degree at most $d$ such that $\VV[f]_{\cG_p} = 1$. Then for all $\epsilon > 0$ and all $x$,
\[ \Pr_{\cG_p}[|f-x| \leq \epsilon] = O(d\epsilon^{1/d}). \]
\end{proposition}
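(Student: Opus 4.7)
The plan is to follow the classical Carbery--Wright strategy, reducing the sublevel set estimate to a negative moment bound for $|f|$ under $\cG_p$ and then establishing that bound by induction on the number of variables.

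First I would normalize. Setting $g = f - x$, the hypothesis $\VV[f]_{\cG_p} = 1$ gives
\[
 \|g\|_{\cG_p}^2 = 1 + (\EE_{\cG_p}[f] - x)^2 \geq 1,
\]
so the desired bound $\Pr_{\cG_p}[|g| \leq \epsilon] = O(d\epsilon^{1/d})$ follows once it is proved for a polynomial of degree at most $d$ with $L^2(\cG_p)$ norm equal to $1$ and $x = 0$, since a larger normalization only strengthens the claim. Hence I may assume $\|f\|_{\cG_p} = 1$ and $x = 0$.

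The key auxiliary estimate I aim for is a negative moment bound
\[
 \EE_{\cG_p}\bigl[|f|^{-\alpha}\bigr] \leq C^\alpha d^\alpha \quad \text{for some } \alpha \in (0,1/d).
\]
Given such a bound, the conclusion follows immediately from Markov's inequality: on the event $A = \{|f| \leq \epsilon\}$ the integrand is at least $\epsilon^{-\alpha}$, so
\[
 \Pr_{\cG_p}[A] \leq \epsilon^\alpha \, \EE_{\cG_p}\bigl[|f|^{-\alpha}\bigr] \leq (Cd\epsilon)^{\alpha},
\]
and the choice $\alpha = 1/d$ yields the desired $O(d\epsilon^{1/d})$.

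The main engine for the negative moment bound is the Gaussian hypercontractive inequality of Nelson--Gross: for every polynomial $f$ of degree $d$ and every $q \geq 2$, $\|f\|_q \leq (q-1)^{d/2}\|f\|_2$. I would induct on the number of variables $n$. For $n = 1$, I would factor $f(t) = c\prod_{j=1}^d (t - r_j)$ over $\mathbb{C}$ and estimate each factor against the one-dimensional Gaussian density, using that $\int_\RR |t - r|^{-\alpha} e^{-(t-p)^2/(2p(1-p))}\,dt$ is uniformly bounded in $r \in \mathbb{C}$ for any $\alpha < 1$; the normalization $\|f\|_2 = 1$ gives a controlled lower bound on $|c|$ weighted by the roots. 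For the inductive step, I would condition on all coordinates but one: along $\cG_p$-almost every line in a fixed direction, $f$ restricts to a degree-$d$ polynomial whose $L^2$ norm along that line is, in expectation, controlled by $\|f\|_2$ via hypercontractivity applied to the coefficient polynomials. Integrating the one-dimensional estimate and applying H\"older's inequality closes the induction.

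The main obstacle is the inductive step: one must be careful that the hypercontractivity loss $(q-1)^{d/2}$ does not compound across the $n$ dimensions, and this requires averaging over directions (or selecting a good direction) rather than slicing along a single coordinate axis. Achieving the sharp linear-in-$d$ factor in the final bound, rather than a naive exponential one, is the most delicate point and is precisely the content of the Carbery--Wright sublevel set theorem; the linear dependence on $d$ ultimately reflects the interaction of the hypercontractivity constant with the exponent $\alpha = 1/d$ in the Markov step.
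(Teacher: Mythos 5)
The paper does not prove this proposition at all: it is imported verbatim as the known Carbery--Wright anti-concentration theorem (just as Proposition~\ref{pro:invariance-IM} is imported from Isaksson--Mossel), and it is then used as a black box in the proof of Corollary~\ref{cor:cdf-distance}. So the task here is really to cite the result, not to reprove it; the question is whether your sketch would constitute a valid proof, and as written it would not.

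The gap is in the central negative-moment-plus-Markov step. First, the bound $\EE_{\cG_p}[|f|^{-\alpha}] \leq C^\alpha d^\alpha$ cannot be invoked at $\alpha = 1/d$: for $f = c(x_1-p)^d$ (a legitimate degree-$d$ polynomial after normalization) one has $\EE[|f|^{-1/d}] = |c|^{-1/d}\,\EE[|x_1-p|^{-1}] = \infty$, so your "choice $\alpha = 1/d$" is inconsistent with the stated range $\alpha \in (0,1/d)$ for a reason of substance, not notation. Second, the bound cannot hold with the uniform constant $(Cd)^\alpha$ for $\alpha$ arbitrarily close to $1/d$ either: Markov would then give $\Pr[|f|\leq\epsilon] \leq (Cd\epsilon)^\alpha$ for all such $\alpha$, hence $\Pr[|f|\leq\epsilon] = O\bigl((d\epsilon)^{1/d}\bigr) = O(\epsilon^{1/d})$ with no degree-dependent factor at all; this is false, since for the normalized monomial $f = c(x_1-p)^d$ with $\VV_{\cG_p}[f]=1$ one has $|c| = \Theta\bigl((p(1-p))^{-d/2}((2d-1)!!)^{-1/2}\bigr)$ and $((2d-1)!!)^{1/(2d)} = \Theta(\sqrt{d})$, so $\Pr_{\cG_p}[|f|\leq\epsilon] = \Omega(\sqrt{d}\,\epsilon^{1/d})$ for small $\epsilon$. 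The correct negative-moment estimates necessarily carry a constant that blows up like $(1-\alpha d)^{-1}$ together with a $d^{\Theta(\alpha d)}$ factor, and then plain Markov with a fixed $\alpha < 1/d$ yields only the weaker exponent $\alpha$, while optimizing $\alpha$ as a function of $\epsilon$ leaves a $\log(1/\epsilon)$ loss rather than the clean $O(d\epsilon^{1/d})$. Obtaining the sharp exponent $1/d$ with only a linear factor in $d$, uniformly in the dimension $n$, is exactly the content of Carbery and Wright's argument, and your closing sentence concedes this point rather than supplying it, so the sketch is circular where it matters; the induction on $n$ via hypercontractivity is likewise only gestured at, and dimension-freeness is the other genuinely hard part. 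The honest options are to cite Carbery--Wright, as the paper does, or, if you want something self-contained, to prove a weaker anti-concentration bound (exponent $c/d$, or an extra logarithmic factor), which still suffices for Corollary~\ref{cor:cdf-distance} after adjusting the constants $Y_p,Y$.
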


\begin{corollary} \label{cor:cdf-distance}
 Let $f$ be a harmonic multilinear polynomial of degree $d$ such that with respect to $\nu_{pn}$, $\VV[f] = 1$ and $\Infs_i[f] \leq \tau$ for all $i \in [n]$.
 There are parameters $Y_p,Y$ such that for any $0 < \epsilon < 1/2$, if $\tau \leq (Y_pd)^{-d}\epsilon^{Yd}$ and $n \geq (Y_pd)^d/\epsilon^{Yd}$ then the CDF distance between $f(\nu_{pn})$ and $f(\pi)$ is at most $\epsilon$, for $\pi \in \{\cG_p,\mu_p\}$. In other words, for all $\sigma$,
\[
 |\Pr_{\nu_{pn}}[f \leq \sigma] - \Pr_{\pi}[f \leq \sigma]| \leq \epsilon.
\]
\end{corollary}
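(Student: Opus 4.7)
The plan is to convert the L\'evy-distance bound of Corollary~\ref{cor:levy-distance} into a CDF-distance bound using the Carbery--Wright anti-concentration inequality (Proposition~\ref{pro:carbery-wright}), following~\cite{MOO}. The reason the CDF statement is weaker (requiring $\tau \leq (Y_pd)^{-d}\epsilon^{Yd}$ instead of $\tau \leq X_p^{-d}\epsilon^X$) is that Carbery--Wright only gives an anti-concentration gain of a $d$th root.

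Fix $\sigma \in \RR$ and $\pi \in \{\cG_p,\mu_p\}$, and choose an auxiliary parameter $\epsilon' > 0$ to be optimized. From Corollary~\ref{cor:levy-distance}, provided $\tau \leq X_p^{-d}\epsilon'^X$ and $n \geq X_p^d/\epsilon'^X$, I get
\[
\Pr_{\pi}[f \leq \sigma] - \Pr_{\nu_{pn}}[f \leq \sigma] \;\leq\; \Pr_{\nu_{pn}}[\sigma < f \leq \sigma + \epsilon'] + \epsilon',
\]
and symmetrically with the roles of $\pi$ and $\nu_{pn}$ reversed (and with $\sigma$ replaced by $\sigma - \epsilon'$). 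So the task reduces to showing that $f$ puts little mass on a short interval under $\nu_{pn}$.

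Next I transfer this small-ball probability to Gaussian space by applying Corollary~\ref{cor:levy-distance} a second time with $\pi = \cG_p$:
\[
\Pr_{\nu_{pn}}[\sigma < f \leq \sigma + \epsilon'] \;\leq\; \Pr_{\cG_p}[\sigma - \epsilon' < f \leq \sigma + 2\epsilon'] + 2\epsilon'.
\]
To apply the Carbery--Wright bound $\Pr_{\cG_p}[|f-x|\leq \eta] = O(d\eta^{1/d})$ I need $\VV[f]_{\cG_p}=1$. We only have $\VV[f]_{\nu_{pn}}=1$, but Corollary~\ref{cor:l2-invariance-harmonic} gives $\VV[f]_{\cG_p} = 1 \pm O_p(d^2/n)$; since the hypothesis on $n$ will make this factor $1+o(1)$, after rescaling $f$ by its $\cG_p$ standard deviation we may invoke Proposition~\ref{pro:carbery-wright} losing only a constant factor. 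This yields
\[
\Pr_{\nu_{pn}}[\sigma < f \leq \sigma + \epsilon'] \;=\; O_p\!\left(d\,\epsilon'^{1/d}\right) + 2\epsilon'.
\]

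Finally I balance parameters. The total error is $O_p(\epsilon' + d\,\epsilon'^{1/d})$, so I set $\epsilon' = (c_p\epsilon/d)^d$ for a sufficiently small constant $c_p$, making the total error at most $\epsilon$. Plugging this choice into the hypotheses of Corollary~\ref{cor:levy-distance}, the condition $\tau \leq X_p^{-d}\epsilon'^X = X_p^{-d}(c_p\epsilon/d)^{Xd}$ becomes $\tau \leq (Y_pd)^{-d}\epsilon^{Yd}$ for appropriate $Y_p,Y$, and similarly for $n$. The case $\pi = \mu_p$ is handled by the same argument (the first L\'evy step uses $\pi=\mu_p$, the second uses $\pi=\cG_p$ in order to reach Carbery--Wright), or equivalently by a triangle inequality through $\cG_p$. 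The only genuinely delicate part is the parameter bookkeeping, which is routine but must be carried out carefully to produce hypotheses in the clean form $(Y_pd)^{-d}\epsilon^{Yd}$ rather than something more awkward.
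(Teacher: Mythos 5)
Your proposal is correct and follows essentially the same route as the paper: apply the L\'evy-distance bound of Corollary~\ref{cor:levy-distance} with an auxiliary shift $\eta$, control the resulting short-interval probability via the Carbery--Wright bound in Gaussian space, and balance with $\eta = \Theta_p((\epsilon/d)^d)$, which is exactly where the $(Y_pd)^{-d}\epsilon^{Yd}$ hypotheses come from. The only cosmetic differences are that the paper handles $\pi=\mu_p$ by citing the classical (cube-vs-Gaussian) CDF result rather than a second application of Corollary~\ref{cor:levy-distance}, and it leaves implicit the harmless renormalization $\VV[f]_{\cG_p}=1\pm O_p(d^2/n)$ that you spell out.
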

\begin{proof}
 It is enough to prove the corollary for $\pi = \cG_p$, the other case following from the corresponding result in the classical setting. Corollary~\ref{cor:levy-distance} and the Carbery--Wright theorem show that for $\tau \leq X_p^{-d}\eta^X$ and $n \geq X_p^d/\eta^X$ we have
\[
 \Pr_{\nu_{pn}}[f \leq \sigma] \leq \Pr_{\cG_p}[f \leq \sigma + \eta] + \eta \leq \Pr_{\cG_p}[f \leq \sigma] + O_p(d\eta^{1/d}).
\]
 We can similarly obtain a bound from the other direction. To complete the proof, choose $\eta = c_p (\epsilon/d)^d$ for an appropriate $c_p$.
\end{proof}

All bounds we have considered so far apply only to low degree functions. We can get around this restriction by applying a small amount of noise to the functions before applying the invariance principle itself. This is the analog of~\cite[Theorem 3.20]{MOO}.

Even though the natural noise operator to apply on the slice is $H_\rho$, from the point of view of applications it is more natural to use $U_\rho$ (which we apply syntactically). Lemma~\ref{lem:noise-operators} shows that the difference between the two noise operators is small.

\begin{corollary} \label{cor:invariance-noise}
 Let $f$ be a harmonic multilinear polynomial such that with respect to $\nu_{pn}$, $\VV[f] \leq 1$ and $\Infs_i[f] \leq \tau$ for all $i \in [n]$.
 There is a parameter $Z_p$ such that for any $0 < \epsilon < 1/2$ and $0 < \delta < 1/2$, if $\tau \leq \epsilon^{Z_p/\delta}$ and $n \geq 1/\epsilon^{Z_p/\delta}$ then for $\pi \in \{\cG_p,\mu_p\}$,
\[
 |\EE_{\nu_{pn}}[\psi(U_{1-\delta}f)] - \EE_{\pi}[\psi(U_{1-\delta}f)]| = O_p(C\epsilon).
\]
\end{corollary}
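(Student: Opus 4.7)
The plan is to reduce Corollary~\ref{cor:invariance-noise} to the low-degree invariance principle (Theorem~\ref{thm:invariance}) by exploiting that $U_{1-\delta}$ exponentially damps the high-degree harmonic components of $f$. I would set $d = \lceil 2\log(1/\epsilon)/\delta\rceil$ and let $g = (U_{1-\delta}f)^{\le d}$. Because both the noise operator and the harmonic truncation act diagonally on the decomposition $f = \sum_k f^{=k}$, we have $g = U_{1-\delta}(f^{\le d})$, so $g$ is a harmonic multilinear polynomial of degree at most $d$. Orthogonality of the components $f^{=k}$ under every exchangeable measure (Lemma~\ref{lem:harmonic-expansion}) gives $\VV[g] \le \VV[f] \le 1$ and $\Infs_i[g] \le \Infs_i[f] \le \tau$ for every $i$ with respect to $\nu_{pn}$, so $g$ satisfies the hypotheses of Theorem~\ref{thm:invariance}.

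Theorem~\ref{thm:invariance} then yields $|\EE_{\nu_{pn}}[\psi(g)] - \EE_\pi[\psi(g)]| = O_p(C\epsilon)$ whenever $\tau \le I_p^{-d}\epsilon^K$ and $n \ge I_p^d/\epsilon^K$. With $d = \Theta(\log(1/\epsilon)/\delta)$, these requirements become $\tau \le \epsilon^{c/\delta}$ and $n \ge \epsilon^{-c/\delta}$ for a constant $c$ depending on $p$, $I_p$, and $K$, and choosing $Z_p$ large enough converts the hypotheses of the corollary into these inequalities. Orthogonality of the harmonic components under $\nu_{pn}$ then yields
\[
\|U_{1-\delta}f - g\|_{\nu_{pn}}^2 = \sum_{k>d}(1-\delta)^{2k}\|f^{=k}\|_{\nu_{pn}}^2 \le (1-\delta)^{2d}\VV[f]_{\nu_{pn}} \le \epsilon^4,
\]
so Lemma~\ref{lem:lipschitz-cs} gives $|\EE_{\nu_{pn}}[\psi(U_{1-\delta}f) - \psi(g)]| \le C\epsilon^2$.

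The analogous estimate on the $\pi$-side uses the same expansion, with $\|f^{=k}\|_{\nu_{pn}}^2$ replaced by $\|f^{=k}\|_\pi^2$. I would control the latter via Lemma~\ref{lem:l2-invariance}: with $Z_p$ chosen appropriately, the hypothesis $n \ge 1/\epsilon^{Z_p/\delta}$ ensures that the noise factor $(1-\delta)^{2k}$ for $k > d$ is small enough to counteract the possible blow-up of $\|f^{=k}\|_\pi$ relative to $\|f^{=k}\|_{\nu_{pn}}$, by applying Lemma~\ref{lem:l2-invariance} to the degrees up to $S_p\sqrt n$ (where the two norms are comparable up to a $p$-dependent constant) and absorbing the super-polynomially small contribution from degrees above this threshold. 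Combining the three estimates via the triangle inequality yields the claimed $O_p(C\epsilon)$ bound. The main obstacle is this $\pi$-side truncation estimate: the hypothesis only bounds $\VV[f]_{\nu_{pn}}$, while the $\pi$-norm of the degree-$k$ part of $f$ can exceed its $\nu_{pn}$-norm for $k$ close to the maximum harmonic degree, and calibrating $Z_p$ relative to the low-degree threshold $S_p\sqrt n$ of Lemma~\ref{lem:l2-invariance} is exactly what is needed to resolve this.
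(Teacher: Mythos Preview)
Your approach is essentially the paper's: set $d \approx \log(1/\epsilon)/\delta$, apply Theorem~\ref{thm:invariance} to the degree-$d$ truncation $g=(U_{1-\delta}f)^{\le d}$, and bound the truncation error on each side via Lemma~\ref{lem:lipschitz-cs} together with the noise damping $\|(U_{1-\delta}f)^{>d}\|_{\nu_{pn}}\le(1-\delta)^d$. The paper writes this more tersely --- it cites Lemma~\ref{lem:lipschitz-cs} and Lemma~\ref{lem:l2-invariance} plus the condition that $d$ be ``low'' (i.e.\ $d\le S_p\sqrt n$, guaranteed by $n\ge\epsilon^{-Z_p/\delta}$) --- but the skeleton is identical, and you are right to flag the $\pi$-side tail as the delicate step.

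One caution about your proposed resolution of that step. Your claim that the contribution from degrees $t>S_p\sqrt n$ is ``super-polynomially small'' needs more than what you have written. For $p<1/2$ the ratio $\|\chi_t\|_\pi^2/\|\chi_t\|_{\nu_{pn}}^2$ grows like $e^{c_pn}$ for $t$ near $pn$ (with $c_p>0$ independent of $\delta$), while the noise factor at such $t$ is only $e^{-2\delta pn}$; for $\delta$ below a threshold depending on $p$ alone, the product is not small, so calibrating $Z_p$ against $S_p\sqrt n$ does not by itself kill this range. The paper's one-line invocation of Lemma~\ref{lem:l2-invariance} here is not more detailed than your sketch and faces the same issue, so you have correctly located the soft spot; but your outline does not yet close it.
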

\begin{proof}
 Let $g = U_{1-\delta}f$. Let $d$ be a low degree to be decided, and split $g = g^{\leq d} + g^{>d}$. With respect to $\nu_{pn}$, $\|g^{>d}\|^2 = \sum_{t>d} (1-\delta)^{2t} \|f^{=t}\|^2 \leq (1-\delta)^{2d}$. On the other hand, Theorem~\ref{thm:invariance} shows that if $\tau \leq I_p^{-d}\epsilon^K$ and $n \geq I_p^d/\epsilon^K$ then
\[
 |\EE_{\nu_{pn}}[\psi(g^{\leq d})] - \EE_\pi[\psi(g^{\leq d})]| = O_p(C\epsilon).
\]
 Since $\|g-g^{\leq d}\| = \|g^{>d}\|$, Lemma~\ref{lem:lipschitz-cs} and Lemma~\ref{lem:l2-invariance} show that as long as the degree $d$ is low,
\[
 |\EE_{\nu_{pn}}[\psi(g)] - \EE_\pi[\psi(g)]| = O_p(C\epsilon + C(1-\delta)^d) = O_p(C\epsilon + Ce^{-\delta d}).
\]
 Choosing $d = \log(1/\epsilon)/\delta$, the resulting error is $O_p(C\epsilon)$.
 This degree is low if $\log(1/\epsilon)/\delta \leq S_p\sqrt{n}$, a condition which is implied by the stated condition on $n$.
\end{proof}

\section{Majority is stablest} \label{sec:majority}

Recall Borell's theorem.

\begin{theorem}[{Borell~\cite{Borell}}] \label{thm:borell}
 Let $f\colon \RR^n \to [0,1]$ have expectation $\mu$ with respect to $\Nor(0,1)^n$. Then $\stabilityc_\rho[f] \leq \Gamma_\rho(\mu)$, where $\Gamma_\rho(\mu)$ is the probability that two $\rho$-correlated Gaussians be at most $\Phi^{-1}(\mu)$.
\end{theorem}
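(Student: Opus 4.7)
The plan is the classical proof via Gaussian rearrangement, organized in three steps and resting on Ehrhard's inequality.

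First, I would reduce to the case of indicator functions. Because the Ornstein--Uhlenbeck operator $U_\rho$ has non-negative Hermite eigenvalues $\rho^k$ for $\rho \in [0,1]$, the functional
\[
 \stabilityc_\rho[f] = \langle f, U_\rho f \rangle = \sum_{k \geq 0} \rho^k \|f^{=k}\|^2
\]
is a convex quadratic form on the convex set $\{f \colon \RR^n \to [0,1] : \EE[f] = \mu\}$. The extreme points of this set are precisely the $\{0,1\}$-valued functions with mean $\mu$, so by convexity the supremum is attained at $f = \charf{A}$ for some measurable $A \subseteq \RR^n$ with Gaussian measure $\mu$ (using weak compactness in $L^2(\gamma)$ and the compactness of $U_\rho$ for $\rho < 1$ to ensure the supremum is attained).

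Second, for an indicator we have $\stabilityc_\rho[\charf{A}] = \Pr[(X,Y) \in A \times A]$, where $(X,Y) \in \RR^n \times \RR^n$ is a pair of $\rho$-correlated standard Gaussian vectors. The main tool is Ehrhard's inequality: for measurable sets $A, B \subseteq \RR^n$ and $\lambda \in [0,1]$,
\[
 \Phi^{-1}(\gamma(\lambda A + (1-\lambda) B)) \geq \lambda \Phi^{-1}(\gamma(A)) + (1-\lambda) \Phi^{-1}(\gamma(B)),
\]
where $\gamma$ is the standard Gaussian measure. Via an iterated Steiner-type Gaussian symmetrization along coordinate directions, Ehrhard's inequality implies that replacing $A$ by the half-space $A^\ast = \{x \in \RR^n : x_1 \leq \Phi^{-1}(\mu)\}$ preserves its Gaussian measure and cannot decrease $\Pr[(X,Y) \in A \times A]$. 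Finally, for the half-space $A^\ast$ we compute directly
\[
 \Pr[(X,Y) \in A^\ast \times A^\ast] = \Pr[X_1 \leq \Phi^{-1}(\mu) \text{ and } Y_1 \leq \Phi^{-1}(\mu)] = \Gamma_\rho(\mu),
\]
since $(X_1, Y_1)$ is a pair of $\rho$-correlated univariate standard Gaussians, matching the definition of $\Gamma_\rho(\mu)$. Chaining the three steps yields $\stabilityc_\rho[f] \leq \Gamma_\rho(\mu)$.

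The main obstacle is the symmetrization in the second step. Ehrhard's inequality itself is a non-trivial result, originally proved for convex bodies via a Gaussian analog of the Pr\'ekopa--Leindler scheme and extended to arbitrary measurable sets by Borell. Turning the functional Ehrhard inequality into a rearrangement statement for the bilinear functional $\Pr[(X,Y) \in A \times A]$ requires identifying the correct one-dimensional slicing procedure, verifying monotonicity of the joint correlation under each symmetrization step, and passing to a limit as the iterated symmetrizations converge to a half-space. An alternative route, should the symmetrization argument prove awkward, is to interpolate between $f$ and its rearrangement along the Ornstein--Uhlenbeck semigroup and differentiate the resulting two-point correlation functional; this stochastic-calculus approach avoids Ehrhard's inequality but demands a delicate analysis of the associated Bakry--\'Emery-type commutation identities.
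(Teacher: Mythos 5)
The paper does not prove this statement at all: Theorem~\ref{thm:borell} is imported verbatim from Borell~\cite{Borell} as a known black-box ingredient (the only thing the paper verifies in its vicinity is the easy transference from $\Nor(0,1)^n$ to $\cG_p$ by the affine change of variables $x \mapsto \sqrt{p(1-p)}\,x+p$). So you are attempting strictly more than the paper does, and your proposal has to be judged as a proof of Borell's theorem itself.

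As such, it has a genuine gap at its core. Steps one and three are fine: the reduction to indicators via convexity of $f \mapsto \langle f, U_\rho f\rangle$ on $\{0 \leq f \leq 1,\ \EE[f]=\mu\}$ (Bauer's maximum principle, using positivity and compactness of $U_\rho$ for $\rho<1$) is standard, and the half-space computation giving $\Gamma_\rho(\mu)$ is immediate. But step two --- the claim that Ehrhard's inequality, via iterated Gaussian Steiner-type symmetrization, yields that half-spaces maximize $\Pr[(X,Y)\in A\times A]$ among sets of fixed Gaussian measure --- is precisely the entire content of Borell's theorem for sets, and it is not a known or straightforward consequence of Ehrhard's inequality. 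Ehrhard's inequality concerns Minkowski averages $\lambda A+(1-\lambda)B$ and yields the Gaussian isoperimetric inequality, which corresponds only to the boundary/derivative case $\rho\to 1^{-}$ of the noise-stability statement; it says nothing directly about the bilinear functional $\int_A\int_A K_\rho$ for fixed $\rho<1$. To make a symmetrization proof work you would instead need a Riesz-type rearrangement inequality for the correlated-Gaussian (Mehler) kernel --- in practice the Baernstein--Taylor two-point symmetrization on the sphere combined with the Poincar\'e limit, which is the route of Borell's original argument and of Isaksson--Mossel --- together with a convergence argument for the iterated rearrangements; none of this is supplied, and you acknowledge the obstacle yourself. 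The alternative you mention (interpolating along the Ornstein--Uhlenbeck semigroup) is essentially the Mossel--Neeman proof and would work, but it is only named, not carried out. For the purposes of this paper, the appropriate move is what the authors do: cite Borell (or Mossel--Neeman / Isaksson--Mossel) rather than reprove the theorem.
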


Borell's theorem remains true if we replace the standard Gaussian with $\cG_p$. Indeed, given a function $f$, define a new function $g$ by $g(x) = f(\sqrt{p(1-p)}x + p)$. If $x \sim \Nor(0,1)$ then $\sqrt{p(1-p)}x+p \sim \cG_p$, and so $\EE[f]_{\cG_p} = \EE[g]_{\Nor(0,1)}$. Similarly, $\stabilityc_\rho[f]_{\cG_p} = \stabilityc_\rho[g]_{\Nor(0,1)}$. Indeed, if $y = \rho x + \sqrt{1-\rho^2}\Nor(0,1)$ then
\begin{align*}
 \sqrt{p(1-p)}y + p &= p + \rho \sqrt{p(1-p)} x + \sqrt{1-\rho^2}\Nor(0,p(1-p)) \\ &= (1-\rho) p + \rho (\sqrt{p(1-p)} x + p) + \sqrt{1-\rho^2}\Nor(0,p(1-p)).
\end{align*}
Therefore Borell's theorem for $f$ and $\cG_p$ follows from the theorem for $g$ and $\Nor(0,1)$.

\emph{Majority is stablest} states that a similar bound essentially holds for all low influence functions on the slice.
This result was originally proved using the invariance principle in~\cite{MOO}. An alternative inductive proof appears in~\cite{DMN}.

It is known (see for example~\cite{MOO}) that the bound $\Phi^{-1}(\mu)$ is achieved by threshold functions.
Corollary~\ref{cor:l2-invariance-harmonic} together with Lemma~\ref{lem:noise-operators} shows that threshold functions achieve the bound also on the slice.
Indeed, take a threshold function $f$ on $d$ variables such that with respect to $\mu_p$, $\EE[f] = \mu$ and $\stabilityc_\rho[f] \geq \Gamma_\rho(\mu) - \epsilon$. Let $\tilde{f}$ be the restriction of $f$ to the slice $\binom{[n]}{pn}$. Corollary~\ref{cor:l2-invariance-harmonic} shows that $\EE[\tilde{f}] = \mu \pm o_n(1)$ and $\stabilityc_\rho[\tilde{f}] = \stabilityc_\rho[f] \pm o_n(1)$. Lemma~\ref{lem:noise-operators} shows that $\stabilitys_\rho[\tilde{f}] = \stabilityc_\rho[f] \pm o_n(1)$. Therefore for large $n$, $\EE[\tilde{f}] \approx \mu$ and $\stabilityc_\rho[f] \geq \Gamma_\rho(\mu) - 2\epsilon$.

Our proof of majority is stablest closely follows the proof of~\cite[Theorem 4.4]{MOO} presented in~\cite[\S11.7]{Ryan}. We need an auxiliary result on $\Gamma_\rho$.

\begin{proposition}[{\cite[Lemma B.6]{MOO}}] \label{pro:gamma-lipschitz}
 For each $\rho$, the function $\Gamma_\rho$ defined in Theorem~\ref{thm:borell} is $2$-Lipschitz.
\end{proposition}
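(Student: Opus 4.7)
The plan is to establish the bound directly from the definition of $\Gamma_\rho$. Let $(X,Y)$ be a pair of $\rho$-correlated standard Gaussians, and write $t_i = \Phi^{-1}(\mu_i)$ for $i=1,2$, so that $\Gamma_\rho(\mu_i) = \Pr[X \leq t_i,\, Y \leq t_i]$. Assume without loss of generality that $\mu_1 \geq \mu_2$, which is equivalent to $t_1 \geq t_2$ since $\Phi^{-1}$ is increasing; in particular $\Gamma_\rho(\mu_1) \geq \Gamma_\rho(\mu_2)$.

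The key step is to decompose the difference between the two probabilities as
\[
 \Gamma_\rho(\mu_1) - \Gamma_\rho(\mu_2)
 = \Pr[t_2 < X \leq t_1,\, Y \leq t_1] + \Pr[X \leq t_2,\, t_2 < Y \leq t_1],
\]
which is obtained by writing the square $\{X \leq t_1, Y \leq t_1\}$ as the disjoint union of the smaller square $\{X \leq t_2, Y \leq t_2\}$ together with an L-shaped region, and then splitting the L into a horizontal and a vertical strip along the line $X = t_2$. Each of the two terms is bounded by the marginal probability $\Pr[t_2 < X \leq t_1] = \Phi(t_1) - \Phi(t_2) = \mu_1 - \mu_2$, since in each term the corresponding coordinate (either $X$ or $Y$) is constrained to the interval $(t_2,t_1]$. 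Summing the two bounds gives $|\Gamma_\rho(\mu_1) - \Gamma_\rho(\mu_2)| \leq 2(\mu_1 - \mu_2)$, which is exactly the 2-Lipschitz property.

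An alternative approach would be to differentiate under the integral sign: writing $t = \Phi^{-1}(\mu)$ and using $dt/d\mu = 1/\phi(t)$, one computes
\[
 \frac{d\Gamma_\rho}{d\mu} = 2 \Phi\!\left(t \sqrt{\tfrac{1-\rho}{1+\rho}}\right),
\]
which is bounded by $2$ everywhere. I expect no real obstacle here; the only subtlety is ensuring that the decomposition is written so that each piece cleanly projects onto an interval of Gaussian mass $\mu_1 - \mu_2$, and the symmetric role of $X$ and $Y$ is what produces the constant $2$ (rather than $1$). The probabilistic argument is self-contained and avoids any explicit computation with the correlated Gaussian density, so I would present it as the main proof.
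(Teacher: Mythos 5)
Your proof is correct: the L-shaped decomposition is valid, each strip has probability at most $\Phi(t_1)-\Phi(t_2)=\mu_1-\mu_2$ by projecting onto the constrained coordinate, and the factor $2$ comes from the two strips, exactly as claimed (the derivative computation is also right for $|\rho|<1$). The paper does not prove this proposition itself but simply cites \cite[Lemma B.6]{MOO}, and your union-bound argument is essentially the standard proof of that cited lemma, so there is nothing further to reconcile.
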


\begin{theorem} \label{thm:majority}
 Let $f\colon \binom{[n]}{pn} \to [0,1]$ have expectation $\mu$ and satisfy $\Infs_i[f] \leq \tau$ for all $i \in [n]$. For any $0 < \rho < 1$, we have
\[
 \stabilitys_\rho[f] \leq \Gamma_\rho(\mu) + O_{p,\rho}\left(\frac{\log\log\frac{1}{\alpha}}{\log\frac{1}{\alpha}}\right) + O_\rho\left(\frac{1}{n}\right), \text{ where } \alpha = \min(\tau,\tfrac{1}{n}).
\]
 The condition $\Infs_i[f] \leq \tau$ for all $i \in [n]$ can be replaced by the condition $\Infc_i[f]_{\mu_p} \leq \tau$ for all $i \in [n]$.
\end{theorem}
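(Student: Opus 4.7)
The proof adapts the MOO strategy, as presented in~\cite[\S11.7]{Ryan}, with our invariance principle (Corollary~\ref{cor:invariance-noise}) playing the role of its classical counterpart. Fix a small noise parameter $\gamma > 0$ to be optimized at the end, and let $g = U_{1-\gamma} f$, applied syntactically to the harmonic multilinear representation of $f$, so that $g^{=d} = (1-\gamma)^d f^{=d}$. A spectral comparison using $1-(1-\gamma)^{2d}\leq 2d\gamma$ and $d\rho^d = O_\rho(1)$ yields
\[
\stabilitys_\rho[f] - \stabilitys_\rho[g] = \sum_d \rho^{d(1-(d-1)/n)}\bigl(1-(1-\gamma)^{2d}\bigr)\|f^{=d}\|^2 \leq O_\rho(\gamma)\VV[f],
\]
so it suffices to bound $\stabilitys_\rho[g]$; Lemma~\ref{lem:noise-operators} further lets us replace the slice operator $H_\rho$ by the syntactic $U_\rho$ inside Lipschitz expectations at cost $O_\rho(1/n)$.

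The transport step uses a bivariate extension of Corollary~\ref{cor:invariance-noise}, obtained by running its proof but invoking the multivariate form of Proposition~\ref{pro:invariance-IM} jointly on $g$ and $U_\rho g$. Let $\zeta\colon\RR\to[0,1]$ be the natural $1$-Lipschitz truncation and $\Psi(a,b) = \zeta(a)\zeta(b)$, which is $O(1)$-Lipschitz. Since $f\in[0,1]$ and averaging preserves $[0,1]$, both $g$ and $H_\rho g$ lie in $[0,1]$ on the slice, and $\stabilitys_\rho[g]_{\nu_{pn}} = \EE_{\nu_{pn}}[\Psi(g,H_\rho g)] = \EE_{\nu_{pn}}[\Psi(g,U_\rho g)]\pm O_\rho(1/n)$; bivariate invariance then gives
\[
\bigl|\EE_{\nu_{pn}}[\Psi(g,U_\rho g)] - \EE_{\cG_p}[\Psi(g,U_\rho g)]\bigr| = O_p(\epsilon)
\]
under the hypotheses $\tau \leq \epsilon^{Z/\gamma}$ and $n \geq 1/\epsilon^{Z/\gamma}$, for some $Z = Z_{p,\rho}$.

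On the Gaussian side, a short $L^2$-closure argument --- bounding $\|g-\zeta\circ g\|_{\cG_p}$ by invariance back to the slice, where it vanishes, after a Lipschitz smoothing of the overshoot indicator --- converts $\EE_{\cG_p}[\Psi(g,U_\rho g)]$ into $\stabilityc_\rho[\zeta\circ g]_{\cG_p}$ at cost $O_p(\epsilon)$. Since $\zeta\circ g\in[0,1]$, the $\cG_p$-version of Borell (Theorem~\ref{thm:borell}, via the linear rescaling recorded after its statement) gives $\stabilityc_\rho[\zeta\circ g]_{\cG_p}\leq \Gamma_\rho(\EE_{\cG_p}[\zeta\circ g])$. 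A univariate invariance combined with the $2$-Lipschitz property of $\Gamma_\rho$ (Proposition~\ref{pro:gamma-lipschitz}) replaces the argument by $\EE_{\nu_{pn}}[g] = \mu$ at additive cost $O_p(\epsilon)$. Collecting, $\stabilitys_\rho[f] \leq \Gamma_\rho(\mu) + O_\rho(\gamma) + O_p(\epsilon)$ with $\epsilon = \tau^{c_{p,\rho}\gamma}$, and a symmetric contribution from $n$ absorbed into $\alpha = \min(\tau,1/n)$; choosing $\gamma$ a constant multiple of $\log\log(1/\alpha)/\log(1/\alpha)$ balances the two errors at the stated rate. The hypothesis $\Infc_i[f]_{\mu_p}\leq\tau$ is handled identically via the cube-influence variant of Corollary~\ref{cor:invariance-noise}.

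The main obstacle is that noise stability is a \emph{bilinear} quantity whereas Corollary~\ref{cor:invariance-noise} is stated for univariate Lipschitz functionals; the remedy is the bivariate extension sketched above, coupled with $[0,1]$-truncations that are free on the slice (where $g$ and its smoothings lie in $[0,1]$) and whose Gaussian effect is absorbed into invariance error via a secondary truncation/closure step.
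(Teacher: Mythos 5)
Your overall skeleton matches the paper's: add a little noise, move to Gaussian space by invariance, truncate to $[0,1]$, apply Borell's theorem, use the $2$-Lipschitz property of $\Gamma_\rho$, and balance $\gamma$ against $\epsilon=\alpha^{c\gamma}$ to get the $\log\log(1/\alpha)/\log(1/\alpha)$ rate. But there are two real problems. First, a load-bearing claim is false as stated: you set $g=U_{1-\gamma}f$ \emph{syntactically} and then assert that $g$ and $H_\rho g$ lie in $[0,1]$ on the slice ``since averaging preserves $[0,1]$.'' The operator $U_{1-\gamma}$ is an averaging operator for $\mu_p$ and $\cG_p$, but not for $\nu_{pn}$; applied syntactically to the harmonic representation of $f$ it can leave $[0,1]$ on the slice. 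This is exactly why the paper takes $g=H_{1-\delta}f$ (which \emph{is} an averaging operator on the slice, so $g\in[0,1]$ there) and only trades $H$ for $U$ inside Lipschitz expectations via Lemma~\ref{lem:noise-operators}. Your claim is used twice --- in the identity $\stabilitys_\rho[g]=\EE_{\nu_{pn}}[\Psi(g,H_\rho g)]$ and in the closure step where $\dist_{[0,1]}(g)$ is said to vanish on the slice --- so as written both steps break; the repair is simply to define $g=H_{1-\gamma}f$ and keep your $H\leftrightarrow U$ swaps inside Lipschitz expectations.

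Second, your transport step rests on a ``bivariate extension of Corollary~\ref{cor:invariance-noise}'' for the pair $(g,U_\rho g)$, which is not available off the shelf: Corollary~\ref{cor:invariance-noise} and Theorem~\ref{thm:invariance} are stated for a single function, and getting a joint version requires redoing not just the invocation of the multivariate Proposition~\ref{pro:invariance-IM} inside Lemma~\ref{lem:invariance}, but also the slice-coupling Lemma~\ref{lem:similar-slices} and the Gaussian-conditioning step via Lemma~\ref{lem:harmonic-gauss} for the pair. These extensions do appear to go through (both $g$ and $U_\rho g$ are low-degree, low-influence harmonic polynomials, and Lemma~\ref{lem:harmonic-gauss} applies jointly), but you assert them rather than carry them out, and the work is substantial. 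The paper avoids the issue entirely: since $\stabilitys_\rho[g]=\|H_{\sqrt\rho}g\|^2$ and $H_{\sqrt\rho}g\in[0,1]$ on the slice, one can write $\stabilitys_\rho[g]=\EE_{\nu_{pn}}[\Sq(H_{\sqrt\rho}g)]$ with the clumped square $\Sq$ (a $2$-Lipschitz function of a \emph{single} noised function), so the univariate Corollary~\ref{cor:invariance-noise} plus Lemma~\ref{lem:noise-operators} suffices. I recommend adopting that half-noise trick; it removes the only genuinely new machinery your route requires, after which your remaining steps (truncation via $\dist_{[0,1]}$, Borell, $\Gamma_\rho$ Lipschitzness, and the parameter balancing) coincide with the paper's.
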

\begin{proof}
 We identify $f$ with the unique harmonic multilinear polynomial agreeing with it on $\binom{[n]}{pn}$.
 For a parameter $0 < \delta < 1/2$ to be chosen later, let $g = H_{1-\delta} f$. Note that the range of $g$ on $\binom{[n]}{pn}$ is included in $[0,1]$ as well, since $H_{1-\delta}$ is an averaging operator. We have
\[
 \stabilitys_\rho[f] - \stabilitys_\rho[g] = \stabilitys_\rho[f] - \stabilitys_{\rho(1-\delta)^2}[f] = \sum_{d=0}^{pn} \rho^{d(1-(d-1)/n)} (1-(1-\delta)^{2d(1-(d-1)/n)})\|f^{=d}\|^2.
\]
 Since $d \leq n/2$, we have
\[
 \rho^{d(1-(d-1)/n)} (1-(1-\delta)^{2d(1-(d-1)/n)}) \leq \rho^{d/2} (1-(1-\delta)^{2d}) \leq 2\delta d\rho^{d/2}.
\]
 The expansion $x/(1-x)^2 = \sum_d dx^d$ shows that $d\rho^{d/2} \leq \sqrt{\rho}/(1-\sqrt{\rho})^2$, and so
\begin{equation} \label{eq:majority-1}
 |\stabilityc_\rho[f] - \stabilityc_\rho[g]| \leq 2\delta \frac{\sqrt{\rho}}{(1-\sqrt{\rho})^2}.
\end{equation}
 From now on we concentrate on estimating $\stabilityc_\rho[g]$.

 Define the clumped square function $\Sq$ by
\[
 \Sq(x) = \begin{cases} 0 & \text{if } x \leq 0, \\ x^2 & \text{if } 0 \leq x \leq 1, \\ 1 & \text{if } x \geq 1. \end{cases}
\]
 It is not difficult to check that $\Sq$ is $2$-Lipschitz. Corollary~\ref{cor:invariance-noise} together with Lemma~\ref{lem:noise-operators} shows that for all $\epsilon > 0$, if $\tau,\frac{1}{n} \leq \epsilon^{Z_p/\delta}$ then
\begin{equation} \label{eq:majority-2}
 |\stabilitys_\rho[g] - \stabilityc_\rho[g]| = |\EE_{\nu_{pn}}[\Sq(H_{\sqrt{\rho}} g)] - \EE_{\cG_p}[\Sq(U_{\sqrt{\rho}} g)]| = O_p(\epsilon) + O\left(\frac{(1-\sqrt{\rho})^{-2}}{n}\right).
\end{equation}

 We would like to apply Borell's theorem in order to bound $\stabilityc_\rho[g]$, but $g$ is not necessarily bounded by $[0,1]$ on $\RR^n$. In order to handle this, we define the function $\tilde{g} = \max(0,\min(1,g))$, which is bounded by $[0,1]$. Let $\dist_{[0,1]}$ be the function which measures the distance of a point $x$ to the interval $[0,1]$. The function $\dist_{[0,1]}$ is clearly $1$-Lipschitz, and so Corollary~\ref{cor:invariance-noise} implies that under the stated assumptions on $\tau,\frac{1}{n}$, we have
\[
 \EE_{\cG_p}[|g-\tilde{g}|] = \EE_{\cG_p}[\dist_{[0,1]}(g)] = |\EE_{\nu_{pn}}[\dist_{[0,1]}(g)] - \EE_{\cG_p}[\dist_{[0,1]}(g)]| = O_p(\epsilon).
\]
 Since $U_{\sqrt{\rho}}$ is an averaging operator and $\Sq$ is $2$-Lipschitz, we conclude that
\begin{equation} \label{eq:majority-3}
 |\stabilityc_\rho[g] - \stabilityc_\rho[\tilde{g}]| = |\EE_{\cG_p}[\Sq(U_{\sqrt{\rho}}g)] - \EE_{\cG_p}[\Sq(U_{\sqrt{\rho}}\tilde{g})]| = O_p(\epsilon).
\end{equation}
 Lemma~\ref{lem:harmonic-expansion} shows that $\EE_{\cG_p}[g] = \EE_{\nu_{pn}}[g] = \EE_{\nu_{pn}}[f] = \mu$, and so $|\EE_{\cG_p}[g]-\mu| = O_p(\epsilon)$. Proposition~\ref{pro:gamma-lipschitz} implies that $\Gamma_\rho(\EE[\tilde g]) \leq \Gamma_\rho(\mu) + O_p(\epsilon)$. Applying Borell's theorem (Theorem~\ref{thm:borell}), we deduce that
\begin{equation} \label{eq:majority-4}
 \stabilityc_\rho[\tilde{g}] \leq \Gamma_\rho(\EE[\tilde g]) \leq \Gamma_\rho(\mu) + O_p(\epsilon).
\end{equation}

 Putting~\eqref{eq:majority-1},\eqref{eq:majority-2},\eqref{eq:majority-3},\eqref{eq:majority-4} together, we conclude that
\[
 \stabilitys_\rho[f] \leq \Gamma_\rho(\mu) + O_p(\epsilon) + O\left(\frac{(1-\sqrt{\rho})^{-2}}{n}\right) + 2\delta \frac{\sqrt{\rho}}{(1-\sqrt{\rho})^2}.
\]
 Taking $\delta = \epsilon$, we obtain
\[
 \stabilitys_\rho[f] \leq \Gamma_\rho(\mu) + O_{p,\rho}(\epsilon) + O_\rho\left(\frac{1}{n}\right).
\]
 The bounds on $\tau,\frac{1}{n}$ now become $\tau,\frac{1}{n} \leq \epsilon^{Z_p/\epsilon}$, from which we can extract the theorem.
\end{proof}

\section{Bourgain's theorem} \label{sec:bourgain}

Bourgain's theorem in Gaussian space gives a lower bound on the tails of Boolean functions (in this section, \emph{Boolean} means that the range of the function is $\{\pm 1\}$). We quote its version from~\cite[Theorem 2.11]{KKO}.

\begin{theorem}[Bourgain] \label{thm:bourgain}
 Let $f\colon \RR^n \to \{\pm 1\}$. For any $k \geq 1$ we have, with respect to Gaussian measure $\Nor(0,1)$,
\[ \|f^{>k}\|^2 = \Omega\left(\frac{\VV[f]}{\sqrt{k}}\right). \]
\end{theorem}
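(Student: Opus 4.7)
Normalize so that $\VV[f] = 1$, and decompose $f = g + h$ where $g := f^{\leq k}$ and $h := f^{>k}$. Let $\epsilon := \|h\|_2^2$; the goal is to show $\epsilon = \Omega(1/\sqrt{k})$. I would assume for contradiction that $\epsilon$ is much smaller than $1/\sqrt{k}$ and derive a contradiction.

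The starting point exploits that $f^2 \equiv 1$, giving the identity
\[
 1 - g^2 \;=\; f^2 - g^2 \;=\; (f-g)(f+g) \;=\; h\cdot(2g + h).
\]
By Cauchy--Schwarz, $\|1-g^2\|_1 \leq \|h\|_2\cdot\|2g+h\|_2 \leq 2\sqrt{\epsilon} + \epsilon = O(\sqrt{\epsilon})$, while a direct computation gives $\EE[1-g^2] = 1 - \|g\|_2^2 = \epsilon$. Setting $P := 1 - g^2$, the degree-$2k$ polynomial $P$ satisfies $P \leq 1$ pointwise, has mean $\epsilon$, and has small $L^1$ norm $O(\sqrt{\epsilon})$. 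Intuitively, $g$ is forced to be $\pm 1$-valued on all but an $O(\epsilon)$-fraction of Gaussian space, yet it is a smooth polynomial of degree at most $k$, and these two conditions are in tension.

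The heart of the argument is then to show that these three conditions on $P$ force $\epsilon = \Omega(1/\sqrt{k})$. My first attempt would be to combine Gaussian hypercontractivity (which gives $\|g\|_4 \leq 3^{k/2}\|g\|_2$, hence $\|P\|_2 \leq 1 + 3^k$) with the Carbery--Wright anticoncentration inequality (Proposition~\ref{pro:carbery-wright}), yielding $\Pr[|P - \EE P| < \eta\|P\|_2] = O(k\eta^{1/(2k)})$, and from this extract a lower bound on $\|P\|_1$ in terms of $\EE[P]$.

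The main obstacle is that this crude combination only delivers the exponentially weak bound $\epsilon \geq e^{-O(k)}$, not the polynomial Bourgain rate $\epsilon \geq c/\sqrt{k}$. Extracting the sharp $1/\sqrt{k}$ factor requires Bourgain's finer argument: one dyadically decomposes the Hermite spectrum of $g$, applies a localized version of the contradiction at each scale (treating each band as a ``thickened level-$k'$'' tail for $k' \in [k/2, k]$ separately), and aggregates the resulting estimates using a log-concavity bound on the level weights $\|f^{=d}\|_2^2$. The $\sqrt{k}$ ultimately emerges as the Gaussian analog of the combinatorial $\sqrt{k}$-loss in Bourgain's cube argument, reflecting the effective number of independent Hermite bands one must control. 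It is this sharp-rate step --- rather than identifying the right test polynomial $P$ --- that I expect to be the genuinely hard part.
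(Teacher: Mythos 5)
Your opening moves are fine as far as they go: normalizing $\VV[f]=1$, writing $f = g + h$ with $g = f^{\leq k}$, and observing that $1 - g^2 = (f-g)(f+g) = h(2g+h)$, hence $\|1-g^2\|_1 \leq 2\sqrt{\epsilon}+\epsilon$ while $\EE[1-g^2]=\epsilon$ and $1-g^2 \leq 1$ pointwise. But the proposal stops being a proof exactly where the theorem starts being hard. You candidly admit that your concrete step --- hypercontractivity giving $\|1-g^2\|_2 \leq 1+3^k$ plus Carbery--Wright anticoncentration --- only yields the exponentially weak $\epsilon \geq e^{-O(k)}$, and the remaining text (``dyadically decompose the Hermite spectrum, apply a localized contradiction at each scale, aggregate via log-concavity of the level weights'') is a description of what a proof might look like rather than an argument. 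Nothing in the proposal actually produces the $1/\sqrt{k}$ rate; the genuine content of Bourgain's theorem is left as a black box labeled ``Bourgain's finer argument.''

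It is also worth knowing that the paper itself does not prove Theorem~\ref{thm:bourgain}. The statement is imported verbatim from~\cite[Theorem 2.11]{KKO}; the only accompanying text is a two-line reduction from $\Nor(0,1)$ to $\cG_p$ by the affine change of variables $x \mapsto \sqrt{p(1-p)}\,x + p$. So there is no in-paper proof to compare your route against --- evaluating your proposal would require reconstructing the argument of~\cite{KKO} (or Bourgain's original), and as written your sketch does not reach that level of detail. If you want to fill the gap, the concrete missing piece is the mechanism that upgrades ``$P = 1-g^2$ has mean $\epsilon$, degree $2k$, $L^1$ norm $O(\sqrt{\epsilon})$, and is bounded above by $1$'' into the bound $\epsilon = \Omega(1/\sqrt{k})$; everything you've written before that point is elementary bookkeeping.
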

While the theorem is stated for $\Nor(0,1)$, it holds for $\cG_p$ as well. Indeed, given a function $f$, define a new function $g$ by $g(x) = f(\sqrt{p(1-p)}x + p)$. If $x \sim \Nor(0,1)$ then $\sqrt{p(1-p)}x + p \sim \cG_p$, and so $\VV[f]_{\cG_p} = \VV[g]_{\Nor(0,1)}$. Our definition of $f^{=i}$ for $\cG_p$ makes it clear that $g^{=i}(x) = f^{=i}(\sqrt{p(1-p)}x + p)$, where $g^{=i}$ is the degree $i$ homogeneous part of $g$. This implies that $\|f^{>k}\|^2_{\cG_p} = \|g^{>k}\|^2_{\Nor(0,1)}$. Therefore Bourgain's theorem for $f$ and $\cG_p$ follows from the theorem for $g$ and $\Nor(0,1)$.

Following closely the proof of~\cite[Theorem 3.1]{KKO}, we can prove a similar result for the slice.

\begin{theorem} \label{thm:bourgain-slice}
 Fix $k \geq 2$. Let $f\colon \binom{[n]}{pn} \to \{\pm 1\}$ satisfy $\Infs_i[f^{\leq k}] \leq \tau$ for all $i \in [n]$.
 For some constants $W_{p,k},C$, if $\tau \leq W_{p,k}^{-1}\VV[f]^C$ and $n \geq W_{p,k}/\VV[f]^C$ then
\[ \|f^{>k}\|^2 = \Omega\left(\frac{\VV[f]}{\sqrt{k}}\right). \]
 The condition $\Infs_i[f] \leq \tau$ for all $i \in [n]$ can be replaced by the condition $\Infc_i[f]_{\mu_p} \leq \tau$ for all $i \in [n]$.
\end{theorem}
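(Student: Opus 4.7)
The plan is to adapt the proof of the Gaussian version of Bourgain's theorem given in~\cite{KKO}, routing it through our invariance principle (Theorem~\ref{thm:invariance}). Decompose $f = g + h$ where $g = f^{\leq k}$ and $h = f^{>k}$, and set $\epsilon = \|h\|^2$. Then $g$ is a harmonic multilinear polynomial of degree at most $k$ with $\VV[g] = \VV[f] - \epsilon \leq 1$, and by hypothesis its slice-influences are all bounded by $\tau$. Since $f$ is $\{\pm 1\}$-valued on the slice and $|f - g| = |h|$ pointwise,
\[
 \EE_{\nu_{pn}}[\dist(g, \{\pm 1\})^2] \leq \EE_{\nu_{pn}}[h^2] = \epsilon.
\]

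The next step is to transfer this bound to $\cG_p$. Since $x \mapsto \dist(x, \{\pm 1\})^2$ is not globally Lipschitz, I would apply Theorem~\ref{thm:invariance} to the truncated version $\psi_M(x) = \min(\dist(x, \{\pm 1\})^2, M^2)$, which is $O(M)$-Lipschitz, and absorb the truncation error $\EE[(\dist^2 - M^2)_+]$ using hypercontractive tail bounds on $g$ (a degree-$k$ polynomial has bounded higher moments on both $\nu_{pn}$ and $\cG_p$, so $\Pr[|g| > M] \leq C_{p,k}/M^4$ by Markov on $\|g\|_4$). Choosing $M$ as an appropriate constant depending on $k$ and $p$, and then taking $\tau, 1/n$ sufficiently small in terms of the invariance parameters, yields
\[
 \EE_{\cG_p}[\dist(g, \{\pm 1\})^2] \leq \epsilon + \delta,
\]
where $\delta$ can be made smaller than any prescribed polynomial of $\VV[f]$.

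Now perform a Gaussian rounding: let $F = \sgn(g)$, which is $\{\pm 1\}$-valued almost surely under $\cG_p$. Since $|F - g| = \dist(g, \{\pm 1\})$ pointwise, $\|F - g\|_{\cG_p}^2 \leq \epsilon + \delta$. Because $F$ is a Boolean function on Gaussian space, Theorem~\ref{thm:bourgain} (extended to $\cG_p$ as noted before the theorem statement) gives
\[
 \|F^{>k}\|_{\cG_p}^2 = \Omega\left(\frac{\VV_{\cG_p}[F]}{\sqrt{k}}\right).
\]
By Lemma~\ref{lem:harmonic-expansion} and Corollary~\ref{cor:l2-invariance-harmonic}, $\VV_{\cG_p}[g]$ agrees with $\VV_{\nu_{pn}}[f] - \epsilon$ up to a $(1 \pm O_p(k^2/n))$ factor, so $\VV_{\cG_p}[F] = \Omega(\VV[f])$ provided $\epsilon + \delta$ is smaller than a constant multiple of $\VV[f]$. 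On the other hand, $g^{>k} = 0$ because $\deg g \leq k$, so $F^{>k} = (F - g)^{>k}$ and $\|F^{>k}\|^2 \leq \|F - g\|^2 \leq \epsilon + \delta$. Combining the two yields
\[
 \epsilon + \delta = \Omega\left(\frac{\VV[f]}{\sqrt{k}}\right),
\]
and picking $W_{p,k}, C$ so that $\delta$ is at most half the right-hand side produces the stated bound on $\epsilon = \|f^{>k}\|^2$.

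The main obstacle is the truncation of the non-Lipschitz squared-distance functional; controlling the truncation error requires a careful interplay between the invariance principle, hypercontractivity on both $\nu_{pn}$ and $\cG_p$, and the L2 comparison provided by Corollary~\ref{cor:l2-invariance-harmonic}. This bookkeeping is what forces $\tau$ and $1/n$ to be polynomially small in $\VV[f]$, producing the constant $C$ in the statement. The alternative hypothesis in terms of cube-influences is handled identically, since Theorem~\ref{thm:invariance} accepts either form of low-influence assumption.
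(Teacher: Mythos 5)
Your proposal is correct in outline and follows the same skeleton as the paper's proof: bound $\EE_{\nu_{pn}}[\dist(f^{\leq k},\{\pm1\})^2]$ by $\|f^{>k}\|^2$, transfer to $\cG_p$ via the invariance principle, round to $F=\sgn(f^{\leq k})$ in Gaussian space, apply Theorem~\ref{thm:bourgain}, lower-bound $\VV_{\cG_p}[F]$, and balance parameters. You deviate in two technical steps. For the transfer, the paper avoids your truncation entirely by writing $\dist(x,\{\pm1\})^2=(|x|-1)^2=x^2+1-2|x|$, handling the quadratic term by L2 invariance (Lemma~\ref{lem:l2-invariance}) and the term $\EE[|f^{\leq k}|]$ by Theorem~\ref{thm:invariance} applied to the $1$-Lipschitz functional $|x|$; this sidesteps all hypercontractive tail bookkeeping. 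Your truncation route does work, but as written it is internally inconsistent: with $M$ a constant depending only on $k$ and $p$, the Gaussian truncation error is only $O_{p,k}(1/M^2)$, a constant, whereas you need $\delta \lesssim \VV[f]/\sqrt{k}$ with $\VV[f]$ possibly small; you must let $M$ grow polynomially in $1/\VV[f]$, which inflates the Lipschitz constant and hence the demands on $\tau$ and $n$ --- harmless, since the hypotheses allow polynomial dependence through $C$, but it should be stated (also, the slice-side hypercontractivity you invoke is unnecessary, since $\psi_M\leq\dist(\cdot,\{\pm1\})^2$ pointwise gives the $\nu_{pn}$-side bound for free). For the variance of $F$, you implicitly use the triangle inequality for standard deviations together with $\|F-g\|_{\cG_p}^2\leq\epsilon+\delta$; the paper instead goes through the L\'evy-distance bound (Corollary~\ref{cor:levy-distance}) to show $\Pr_{\cG_p}[f^{\leq k}\geq\tfrac13]$ and $\Pr_{\cG_p}[f^{\leq k}\leq-\tfrac13]$ are both $\Omega(\VV[f])$. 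Your version is simpler and valid, provided you spell out the seminorm triangle inequality and record the trivial case split: if $\epsilon+\delta$ is not a small fraction of $\VV[f]$ then (after fixing $\delta\leq c_p\VV[f]/\sqrt{k}$) the conclusion $\|f^{>k}\|^2=\Omega(\VV[f]/\sqrt{k})$ holds outright, playing the role of the paper's argument on the event $\{f=1,\ f^{\leq k}<\tfrac23\}$. With these two points tightened, your argument is a complete proof.
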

\begin{proof}
 We treat $f$ as a harmonic multilinear polynomial.
 Since $f$ is Boolean, working over $\nu_{pn}$ we have
\[
 \|f^{>k}\|^2 = \|f-f^{\leq k}\|^2 \geq \|f^{\leq k}-\sgn(f^{\leq k})\|^2 = \|f^{\leq k}\|^2 + 1 - 2\EE[|f^{\leq k}|].
\]
 Lemma~\ref{lem:l2-invariance} shows that $\|f^{\leq k}\|^2_{\cG_p} = \|f^{\leq k}\|^2(1 \pm O_p(k^2/n))$. Since the absolute value function is $1$-Lipschitz, Theorem~\ref{thm:invariance} applied to the parameter $\delta > 0$ shows that if $\tau \leq I_p^{-k}\delta^K$ and $n \geq I_p^k/\delta^K$ then
 then $|\EE_{\nu_{pn}}[|f^{\leq k}|] - \EE_{\cG_p}[|f^{\leq k}|]| = O_p(\delta)$. This shows that
\[
 \|f^{>k}\|^2_{\nu_{pn}} \geq \|f^{\leq k}-\sgn(f^{\leq k})\|^2_{\cG_p} - O_p\left(\delta + \frac{k^2}{n}\right).
\]

 Let $g = \sgn(f^{\leq k})$. With respect to Gaussian measure $\cG_p$, $\|f^{\leq k}-g\|^2 \geq \|g^{>k}\|^2 = \Omega(\VV[g]/\sqrt{k})$, using Bourgain's theorem (Theorem~\ref{thm:bourgain}). Putting everything together, we conclude that
\begin{equation} \label{eq:bourgain-slice-1}
 \|f^{>k}\|^2_{\nu_{pn}} \geq \Omega\left(\frac{\VV[g]_{\cG_p}}{\sqrt{k}}\right) - O_p\left(\delta + \frac{k^2}{n}\right).
\end{equation}
 It remains to lower bound $\VV[g]_{\cG_p}$. Note first that over $\nu_{pn}$, $\VV[f] = 4\Pr[f=1]\Pr[f=-1]$, and so $\Pr[f=1],\Pr[f=-1] \geq \VV[f]/4$. We can furthermore assume that
\[ \Pr_{\nu_{pn}}[f^{\leq k}\geq \tfrac{2}{3}],\Pr_{\nu_{pn}}[f^{\leq k}\leq -\tfrac{2}{3}] \geq \VV_{\nu_{pn}}[f]/8, \]
 since if for example $\Pr[f^{\leq k}\geq \frac{2}{3}] \leq \VV[f]/8$ then with probability at least $\VV[f]/8$ we have $f=1$ and $f^{\leq k} < \frac{2}{3}$, and so $\|f^{>k}\|^2 = \|f-f^{\leq k}\|^2 \geq \frac{1}{9} \cdot \VV[f]/8 = \Omega(\VV[f])$. Corollary~\ref{cor:levy-distance} applied with $\epsilon = \VV[f]/16 \leq 1/3$ shows that for an appropriate $c$, if $\tau \leq X_p^{-k}/c$ and $n \geq c X_p^k$ then
\[ \Pr_{\cG_p}[f^{\leq k}\geq \tfrac{1}{3}],\Pr_{\cG_p}[f^{\leq k}\leq -\tfrac{1}{3}] \geq \VV_{\nu_{pn}}[f]/16, \]
 and so $\VV_{\cG_p}[g] \geq 4\VV_{\nu_{pn}}[f]/16(1-\VV_{\nu_{pn}}[f]/16) = \Omega(\VV_{\nu_{pn}[f]})$. Combining this with~\eqref{eq:bourgain-slice-1} shows that under $\nu_{pn}$,
\begin{equation} \label{eq:bourgain-slice-2}
 \|f^{>k}\|^2 \geq \Omega\left(\frac{\VV[f]}{\sqrt{k}}\right) - O_p\left(\delta + \frac{k^2}{n}\right).
\end{equation}
 Choosing $\delta = c_p \VV[f]/\sqrt{k}$ for an appropriate $c_p$ completes the proof.
\end{proof}

We do not attempt to match here~\cite[Theorem 3.2]{KKO}, which has the best constant in front of $\VV[f]/\sqrt{k}$.

\section{Kindler--Safra theorem} \label{sec:kindler-safra}

Theorem~\ref{thm:bourgain-slice} implies a version of the Kindler--Safra theorem~\cite{KindlerSafra,Kindler}, Theorem~\ref{thm:kindler-safra} below.

We start by proving a structure theorem for almost degree $k$ functions. We start with a hypercontractive estimate due to Lee and Yau~\cite{LeeYau} (see for example~\cite[Proposition 6.2]{F}).

\begin{proposition} \label{pro:hypercontractivity}
 For every $p$ there exists a constant $r_p$ such that for all functions $f\colon \binom{[n]}{pn} \to \RR$, $\|H_{r_p}f\|_2 \leq \|f\|_{4/3}$.
\end{proposition}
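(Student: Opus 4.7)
The plan is to derive this from a log-Sobolev inequality for the random-transposition dynamics on the slice combined with Gross's equivalence between log-Sobolev inequalities and hypercontractivity.

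First, I would identify the noise operator $H_\rho$ as the heat semigroup of the continuous-time Bernoulli--Laplace process on $\binom{[n]}{pn}$. Concretely, define the generator $\mathcal{L}$ whose action on the homogeneous piece $f^{=d}$ is multiplication by $d(1 - (d-1)/n)$. Then a direct spectral check shows $e^{-s\mathcal{L}} f = \sum_d \rho^{d(1-(d-1)/n)} f^{=d}$ when $s = \log(1/\rho)$, which matches the spectral definition of $H_\rho$ given in Section~\ref{sec:analysis}. The associated Dirichlet form is $\mathcal{E}(f,f) = \langle f, \mathcal{L} f\rangle_{\nu_{pn}}$, which by the swap interpretation of $H_\rho$ can be rewritten as a constant times $\sum_{i<j} \EE_{\nu_{pn}}[(f - f^{(ij)})^2]$.

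Second, I would invoke the log-Sobolev inequality of Lee--Yau~\cite{LeeYau} for the Bernoulli--Laplace model: there is a constant $c_p > 0$, independent of $n$, such that every positive function $g$ on $\binom{[n]}{pn}$ satisfies $\mathrm{Ent}_{\nu_{pn}}[g^2] \leq c_p \,\mathcal{E}(g,g)$. Gross's theorem then converts this into hypercontractivity: for each $s \geq 0$ the semigroup $e^{-s\mathcal{L}}$ is a contraction from $L^{p(s)}$ to $L^{q(s)}$ whenever $q(s) - 1 \leq e^{4s/c_p}(p(s) - 1)$. Specializing to $p(s) = 4/3$ and $q(s) = 2$ gives the threshold $s_p = (c_p \log 3)/4$, so setting $r_p = e^{-s_p}$ produces a dimension-free constant with the desired contraction property $\|H_{r_p} f\|_2 \leq \|f\|_{4/3}$.

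The main obstacle is of course step two: establishing the dimension-free log-Sobolev constant $c_p$ for the Bernoulli--Laplace generator. One way is to follow Lee--Yau's original martingale/induction argument. A cleaner alternative is to use the Diaconis--Saloff-Coste comparison technique, starting from the log-Sobolev constant for random transpositions on $S_n$ and transferring it to the slice via the natural projection $S_n \to \binom{[n]}{pn}$; one only has to verify that the projected Dirichlet form dominates the intrinsic one, up to a factor depending on $p$ but not on $n$. Once that quantitative log-Sobolev input is in hand, the remainder of the proof is the essentially mechanical application of Gross's theorem sketched above.
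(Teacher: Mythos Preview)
Your approach is correct and is the standard derivation. The paper itself does not prove this proposition: it is stated as a black-box citation to Lee and Yau~\cite{LeeYau}, with a pointer to~\cite[Proposition 6.2]{F} for the specific formulation used here. Your sketch --- identifying $H_\rho$ as the Bernoulli--Laplace heat semigroup, invoking Lee--Yau's dimension-free log-Sobolev inequality for that chain, and converting to $L^{4/3} \to L^2$ hypercontractivity via Gross's theorem --- is exactly the route taken in those references, so there is nothing to compare.
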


This implies the following dichotomy result.

\begin{lemma} \label{lem:dichotomy}
 Fix parameters $p$ and $k$, and let $f\colon \binom{[n]}{pn} \to \{\pm 1\}$ satisfy $\|f^{>k}\|^2 = \epsilon$. For any $i,j \in [n]$, either $\Infs_{ij}[f] \leq \epsilon/2$ or $\Infs_{i,j}[f] \geq J_{p,k}$, for some constant $J_{p,k}$.
\end{lemma}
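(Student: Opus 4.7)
The plan is a standard hypercontractive dichotomy, applied to the ``discrete derivative'' $\phi = (f - f^{(ij)})/2$. Since $f$ is $\{\pm 1\}$-valued, $\phi$ takes values in $\{-1, 0, 1\}$. Write $\alpha = \|\phi\|^2 = \Pr_{\nu_{pn}}[f \neq f^{(ij)}]$, so that $\Infs_{ij}[f] = 4\alpha$; the task then reduces to showing that $\alpha$ is either of order $\epsilon$ or bounded below by a positive constant depending only on $p$ and $k$.

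First, because the transposition $(ij)$ lies in $S_n$ and the degree decomposition on the slice is $S_n$-invariant, it commutes with the swap, so $\phi^{>k} = \tfrac{1}{2}(f^{>k} - (f^{>k})^{(ij)})$. The triangle inequality together with exchangeability of $\nu_{pn}$ (which preserves $L^2$ norms) therefore gives $\|\phi^{>k}\|^2 \leq \epsilon$, and by orthogonality of low- and high-degree parts $\|\phi^{\leq k}\|^2 \geq \alpha - \epsilon$. The opposite inequality comes from hypercontractivity: since $\phi^{\leq k}$ has degree at most $k$, and each exponent $d(1 - (d-1)/n)$ appearing in $H_{r_p}$ is at most $d \leq k$, we have $\|H_{r_p} g\|_2 \geq r_p^k \|g\|_2$ for every $g$ of degree $\leq k$; combined with Proposition~\ref{pro:hypercontractivity} this yields $\|\phi^{\leq k}\|_2 \leq r_p^{-k} \|\phi^{\leq k}\|_{4/3}$. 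Using that $|\phi| \in \{0,1\}$ gives $\|\phi\|_{4/3} = \alpha^{3/4}$, so the triangle inequality together with $\|\phi^{>k}\|_{4/3} \leq \|\phi^{>k}\|_2 \leq \sqrt{\epsilon}$ produces $\|\phi^{\leq k}\|_{4/3} \leq \alpha^{3/4} + \sqrt{\epsilon}$.

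Combining the two bounds on $\|\phi^{\leq k}\|_2^2$ yields
\[
\alpha - \epsilon \;\leq\; r_p^{-2k} (\alpha^{3/4} + \sqrt{\epsilon})^2 \;\leq\; 2 r_p^{-2k}(\alpha^{3/2} + \epsilon).
\]
From here a simple case analysis finishes the argument: if $\alpha^{3/2} \leq \epsilon$, the inequality forces $\alpha = O_{p,k}(\epsilon)$ and we land in the small-influence regime; otherwise $\alpha^{3/2}$ dominates and we deduce $\alpha \geq c\, r_p^{4k}$, which provides the constant $J_{p,k}$ in the large-influence regime.

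The main quantitative subtlety is matching the precise ``$\epsilon/2$'' appearing in the statement: the argument above naturally produces a bound of the form $\Infs_{ij}[f] \leq C_{p,k}\epsilon$, with $C_{p,k}$ growing like $r_p^{-2k}$. Recovering the cleaner $\epsilon/2$ form requires more frugal bookkeeping in the AM--GM splitting of $(\alpha^{3/4} + \sqrt{\epsilon})^2$ and then choosing $J_{p,k}$ small enough to absorb any residual slack; this is a purely quantitative refinement that does not affect the structure of the argument, and in any event a bound of the form $\Infs_{ij}[f] = O_{p,k}(\epsilon)$ suffices for the downstream applications in the Kindler--Safra theorem.
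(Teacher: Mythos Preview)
Your approach is correct and essentially the same as the paper's: define $\phi=(f-f^{(ij)})/2$, use that it is $\{0,\pm1\}$-valued so $\|\phi\|_{4/3}^{4/3}=\|\phi\|_2^2=\alpha$, bound $\|\phi^{>k}\|^2\le\epsilon$ via $S_n$-equivariance of the degree decomposition, and then invoke hypercontractivity (Proposition~\ref{pro:hypercontractivity}) to force a dichotomy on $\alpha$.

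The one place where you take a slightly longer route is in where you apply hypercontractivity. You apply it to $\phi^{\le k}$, which forces you to control $\|\phi^{\le k}\|_{4/3}$ via the triangle inequality and pick up the extra $\sqrt{\epsilon}$ term. The paper instead applies hypercontractivity to $\phi$ itself and only afterwards drops to the low-degree part on the $L^2$ side:
\[
\alpha^{3/2}=\|\phi\|_{4/3}^2 \;\ge\; \|H_{r_p}\phi\|_2^2 \;\ge\; \|H_{r_p}\phi^{\le k}\|_2^2 \;\ge\; r_p^{2k}\,\|\phi^{\le k}\|_2^2 \;\ge\; r_p^{2k}(\alpha-\epsilon).
\]
This single inequality $\alpha^{3/2}\ge r_p^{2k}(\alpha-\epsilon)$ immediately gives the dichotomy ``$\alpha\le 2\epsilon$ or $\alpha\ge r_p^{4k}/4$'', with an \emph{absolute} constant in the small-influence branch rather than the $C_{p,k}$ you obtain. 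So the ``more frugal bookkeeping'' you mention is exactly this: apply hypercontractivity before projecting, not after. (The constant $\epsilon/2$ in the statement is itself off by a fixed factor in the paper's own normalization; what matters, as you note, is getting an absolute $O(\epsilon)$ bound, and the paper's ordering delivers that directly.)
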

\begin{proof}
 Let $r = r_p$ be the parameter in Proposition~\ref{pro:hypercontractivity}. Let $g = (f - f^{(ij)})/2$, so that $\Infs_{ij}[f] = 4\|g\|^2$. Since $g(x) \in \{0,\pm 1\}$, $\|g\|_{4/3}^{4/3} = \|g\|^2 = 4\Infs_{ij}[f]$. Proposition~\ref{pro:hypercontractivity} therefore implies that
\[
 (4\Infs_{ij}[f])^{3/2} = \|g\|_{4/3}^2 \geq \|H_rg\|_2^2 \geq \|H_rg^{\leq k}\|_2^2 \geq r^k\|g^{\leq k}\|^2.
\]
 Since $g = (f-f^{(ij)})/2$, we can bound $\|g^{>k}\|^2 \leq \|f^{>k}\|^2 = \epsilon$. Therefore
\[
 (4\Infs_{ij}[f])^{3/2} \geq r^k (\|g\|^2 - \epsilon) = r^k (4\Infs_{ij}[f] - \epsilon).
\]
 If $4\Infs_{ij}[f] > 2\epsilon$ then $4\Infs_{ij}[f] - \epsilon > 4\Infs_{ij}[f]/2$ and so $4\Infs_{ij}[f] \geq r^{2k}/4$.
\end{proof}

We need the following result, due to Wimmer~\cite[Proposition 5.3]{Wimmer}.

\begin{lemma}[{\cite[Proposition 5.3]{Wimmer}, \cite[Lemma 5.2]{F}}] \label{lem:slice-junta}
 Let $f\colon \binom{[n]}{pn} \to \RR$. For every $\tau > 0$ there is a set $J \subseteq [n]$ of size $O(\Infs[f]/\tau)$ such that $\Infs_{ij}[f] < \tau$ whenever $i,j \notin J$.
\end{lemma}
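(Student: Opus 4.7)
The plan is to introduce $J$ as the set of individually influential coordinates and then to control every pair-influence with $i,j\notin J$ in terms of $\Infs_i[f]$ and $\Infs_j[f]$. Concretely, I would set
\[
 J = \{ i \in [n] : \Infs_i[f] \geq \tau/C \}
\]
for a suitable absolute constant $C$, so that Markov's inequality applied to the total slice-influence $\Infs[f] = \sum_i \Infs_i[f]$ immediately yields $|J| \leq C\Infs[f]/\tau = O(\Infs[f]/\tau)$. All of the work is then concentrated in proving a pointwise bound of the form $\Infs_{ij}[f] \leq C'(\Infs_i[f]+\Infs_j[f])$, after which the choice of $J$ closes the proof.

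The key ingredient I would use is the elementary symmetric-group identity $(i\,j) = (i\,k)(j\,k)(i\,k)$, valid for any $k \neq i,j$. Writing $T_\sigma g(x) = g(x^\sigma)$, I would expand $f - f^{(ij)}$ as a telescoping sum along this factorisation, and then use that each $T_\sigma$ is an isometry of $L^2(\nu_{pn})$ together with the triangle inequality to obtain
\[
 \sqrt{\Infs_{ij}[f]} \leq 2\sqrt{\Infs_{ik}[f]} + \sqrt{\Infs_{jk}[f]}.
\]
Averaging over $k \notin \{i,j\}$ and applying Cauchy--Schwarz in the form $\frac{1}{n-2}\sum_k \sqrt{\Infs_{ik}[f]} \leq \sqrt{\frac{1}{n-2}\sum_k \Infs_{ik}[f]} = O(\sqrt{\Infs_i[f]})$, using the definition $\Infs_i[f] = \frac{1}{n}\sum_j \Infs_{ij}[f]$, then yields the desired comparison $\Infs_{ij}[f] = O(\Infs_i[f]+\Infs_j[f])$.

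The conceptual obstacle is finding the right way to relate pair-influences to single-coordinate influences in the first place: the naive approach of putting into $J$ one endpoint of every pair with $\Infs_{ij}[f] \geq \tau$ produces a set of size only $O(n\Infs[f]/\tau)$, losing a factor of $n$, because $\sum_{ij}\Infs_{ij}[f] = n\Infs[f]$ rather than $\Infs[f]$. The slice-specific ingredient that rescues this is precisely the conjugation identity, which effectively reduces a two-coordinate deviation to a sum of three one-coordinate deviations through an auxiliary pivot $k$. The remaining technicalities (the mild $\frac{n}{n-2}$ inflation from Cauchy--Schwarz, the irrelevant edge case $n \leq 3$, and the tracking of the universal constant $C$) should all absorb harmlessly into the implicit $O(\cdot)$.
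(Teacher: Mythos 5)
Your proof is correct: the pivot identity $(i\,j)=(i\,k)(j\,k)(i\,k)$, the telescoping with exchangeability of $\nu_{pn}$ giving $\sqrt{\Infs_{ij}[f]} \leq 2\sqrt{\Infs_{ik}[f]}+\sqrt{\Infs_{jk}[f]}$, the average over $k$ with Cauchy--Schwarz to get $\Infs_{ij}[f]=O(\Infs_i[f]+\Infs_j[f])$, and the Markov bound on $J=\{i:\Infs_i[f]\geq\tau/C\}$ all check out. The paper does not prove this lemma itself but cites Wimmer and Filmus, and your argument is essentially the same as the one used in those references, so there is nothing to flag beyond that.
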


Combining Lemma~\ref{lem:dichotomy} and Lemma~\ref{lem:slice-junta}, we deduce that bounded degree functions depend on a constant number of coordinates, the analog of~\cite[Theorem 1]{NisanSzegedy}.

\begin{corollary} \label{cor:dichotomy}
 Fix parameters $p$ and $k$. If $f\colon \binom{[n]}{pn} \to \{\pm 1\}$ has degree $k$ then $f$ depends on $O_{p,k}(1)$ coordinates (that is, $f$ is invariant under permutations of all other coordinates).
\end{corollary}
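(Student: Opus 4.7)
The plan is to deduce the corollary by combining the dichotomy of Lemma~\ref{lem:dichotomy} with the junta result of Lemma~\ref{lem:slice-junta}, exploiting the fact that ``degree exactly $k$'' forces the parameter $\epsilon$ in Lemma~\ref{lem:dichotomy} to be $0$.

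First I would observe that if $f$ has degree $k$ then $f^{>k} \equiv 0$, so $\|f^{>k}\|^2 = 0$. Applying Lemma~\ref{lem:dichotomy} with $\epsilon = 0$, for every pair $i,j \in [n]$ we get that either $\Infs_{ij}[f] \leq 0$ (i.e.\ $\Infs_{ij}[f] = 0$) or $\Infs_{ij}[f] \geq J_{p,k}$. Thus all pairwise slice-influences of $f$ are either exactly $0$ or bounded away from $0$ by the constant $J_{p,k}$, with no intermediate values allowed.

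Next I would bound the total influence. Since $f$ is $\pm 1$-valued we have $\VV[f] \leq 1$, and by the Poincar\'e inequality stated in Section~\ref{sec:analysis}, $\Infs[f] \leq (\deg f)\VV[f] \leq k$. Apply Lemma~\ref{lem:slice-junta} with threshold $\tau = J_{p,k}/2$: there exists a set $J \subseteq [n]$ of size $O(\Infs[f]/\tau) = O_{p,k}(1)$ such that $\Infs_{ij}[f] < J_{p,k}/2$ whenever $i,j \notin J$. Combined with the dichotomy, this forces $\Infs_{ij}[f] = 0$ for all $i,j \notin J$.

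Finally I would translate the vanishing pairwise influences into coordinate-dependence. Since $\Infs_{ij}[f] = \EE_{\nu_{pn}}[(f - f^{(ij)})^2] = 0$ and $\nu_{pn}$ has full support on the finite slice, we get $f = f^{(ij)}$ pointwise for every $i,j \notin J$. Hence $f$ is invariant under the subgroup of permutations fixing $J$. This means $f(x)$ depends only on $(x_i)_{i \in J}$ together with the multiset $\{x_i : i \notin J\}$; but on the slice the constraint $\sum_i x_i = pn$ together with $(x_i)_{i \in J}$ determines $\sum_{i \notin J} x_i$, and since each $x_i \in \{0,1\}$ this determines the multiset. Therefore $f$ depends only on the coordinates in $J$, concluding the proof. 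No real obstacles arise: the only subtle point is checking that invariance outside $J$ plus the slice constraint really does collapse the dependence to $J$, which is immediate from the $\{0,1\}$-valued structure.
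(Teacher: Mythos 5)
Your proof is correct and follows essentially the same route as the paper: apply Lemma~\ref{lem:slice-junta} (with threshold a constant fraction of $J_{p,k}$, using $\Infs[f]\leq k\VV[f]\leq k$) to get a junta set $J$ of size $O_{p,k}(1)$, then Lemma~\ref{lem:dichotomy} with $\epsilon=0$ forces $\Infs_{ij}[f]=0$ for $i,j\notin J$, i.e.\ invariance under permutations fixing $J$. Your final paragraph spelling out why this invariance on the slice means $f$ literally depends only on the coordinates in $J$ is a correct (and slightly more detailed) elaboration of what the paper states parenthetically.
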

\begin{proof}
 Apply Lemma~\ref{lem:slice-junta} with $\tau = J_{p,k}$ to obtain a set $J$ of size $O(k/J_{p,k})$. Lemma~\ref{lem:dichotomy} with $\epsilon = 0$ shows that for $i,j \notin J$ we have $\Infs_{ij}[f] = 0$, and so $f$ is invariant under permutations of coordinates outside of $J$.
\end{proof}

Using Bourgain's tail bound, we can deduce a stability version of Corollary~\ref{cor:dichotomy}, namely a Kindler--Safra theorem for the slice.

\begin{theorem} \label{thm:kindler-safra}
 Fix the parameter $k \geq 2$. Let $f\colon \binom{[n]}{pn} \to \{\pm 1\}$ satisfy $\|f^{>k}\|^2 = \epsilon$. There exists a function $h\colon \binom{[n]}{pn} \to \{\pm 1\}$ of degree $k$ depending on $O_{k,p}(1)$ coordinates (that is, invariant under permutations of all other coordinates) such that
 \[ \|f-h\|^2 = O_{p,k}\left(\epsilon^{1/C} + \frac{1}{n^{1/C}}\right), \]
 for some constant $C$.
\end{theorem}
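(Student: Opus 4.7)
The argument would follow the template of Kindler--Safra on the cube from~\cite{KKO}, replacing each cube-based ingredient with its slice counterpart: Bourgain's tail bound (Theorem~\ref{thm:bourgain-slice}), the dichotomy (Lemma~\ref{lem:dichotomy}), and the slice junta lemma (Lemma~\ref{lem:slice-junta}). First I would apply Lemma~\ref{lem:slice-junta} to $f^{\leq k}$ with a threshold $\tau = \Theta_{p,k}(\epsilon^{1/C_0})$ to be tuned. Since $f$ is Boolean, the Poincar\'e inequality gives $\Infs[f^{\leq k}] \leq k\VV[f^{\leq k}] \leq k$, so the resulting set $J$ has $|J| = O(k/\tau)$, and for all pairs $i,j \notin J$ we get $\Infs_{ij}[f^{\leq k}] < \tau$.

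Next, for each compatible $A \in \{0,1\}^J$ I would pass to the restriction $f_A$ on the sub-slice $\binom{[n]\setminus J}{pn-|A|}$. Corollary~\ref{cor:slice-subst} gives $\deg((f^{\leq k})|_A) \leq k$, so
\[
 f_A^{\leq k} = (f^{\leq k})|_A + \bigl((f^{>k})|_A\bigr)^{\leq k}.
\]
Combining the averaging identity $\Infs_{ij}[f^{\leq k}] = \EE_A\Infs_{ij}[(f^{\leq k})|_A]$ with $\EE_A\|(f^{>k})|_A\|^2 = \|f^{>k}\|^2 = \epsilon$ and the Poincar\'e bound $\Infs_i[g] \leq k\|g\|^2$ applied to $((f^{>k})|_A)^{\leq k}$, one checks that for a $1 - O(\epsilon^{\Omega(1)})$ fraction of ``good'' assignments $A$, every coordinate influence $\Infs_i[f_A^{\leq k}]$ on the sub-slice is simultaneously bounded by some $\tau' = O_{p,k}(\tau + \epsilon^{\Omega(1)})$. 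For such good $A$, Theorem~\ref{thm:bourgain-slice} applied to $f_A$ yields $\|f_A^{>k}\|^2 = \Omega(\VV[f_A]/\sqrt{k})$, and rearranging gives $\VV[f_A] = O(\sqrt{k}\,\|f_A^{>k}\|^2)$ whenever $\VV[f_A] \geq \epsilon^{\Omega(1)}$; the opposite regime is already a stronger conclusion. Averaging over $A$ and using $\EE_A\|f_A^{>k}\|^2 \leq \|f - f^{\leq k}\|^2 = \epsilon$ delivers $\EE_A \VV[f_A] = O_{p,k}(\epsilon^{1/C} + n^{-1/C})$ once $\tau$ and $n$ are rebalanced to meet the hypotheses of Theorem~\ref{thm:bourgain-slice}.

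I would then define $h(x) = \sgn(\EE[f(y) \mid y_J = x_J])$, a $\pm 1$ junta on $|J| = O_{p,k}(1)$ coordinates. Using $\min(q,1-q) \leq 2q(1-q) = \VV/2$ for a $\pm 1$-valued variable,
\[
 \|f-h\|^2 = 4\Pr[f\neq h] \leq 2\,\EE_A\VV[f_A] = O_{p,k}\!\left(\epsilon^{1/C}+n^{-1/C}\right).
\]
To upgrade $h$ from ``junta on $O_{p,k}(1)$ coordinates'' to ``degree $\leq k$'', observe that $\|h^{>k}\|^2 \leq 2\|h-f\|^2 + 2\|f-f^{\leq k}\|^2$ is of the same order; applying Lemma~\ref{lem:dichotomy} to $h$ itself then forces every pair-influence $\Infs_{ij}[h]$ to be either tiny or at least $J_{p,k}$, and on the finite $O_{p,k}(1)$-coordinate domain of $h$ this gap lets one replace $h$ by a Boolean function of degree~$\leq k$ at negligible additional $L^2$ cost, exactly in the spirit of Corollary~\ref{cor:dichotomy}.

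The main obstacle is the quantitative bookkeeping of the restriction step: turning the pair-influence bound $\Infs_{ij}[f^{\leq k}]<\tau$ on the whole slice into simultaneous coordinate-influence bounds for $f_A^{\leq k}$ on sub-slices, for enough $A$, with constants polynomial in~$\epsilon$ and uniform over $n$. A secondary subtlety is the final degree upgrade, since the natural junta $h$ has degree at most $|J|$, which may exceed $k$, and must be modified using the dichotomy gap without spoiling the $L^2$ estimate; this is where the strict separation between $\leq \epsilon/2$ and $\geq J_{p,k}$ in Lemma~\ref{lem:dichotomy} plays a crucial role.
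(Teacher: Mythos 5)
Your overall template matches the paper's: slice-junta lemma, restriction to the coordinates in $J$, Bourgain's slice tail bound on the restrictions, then $h = \sgn$ of the averaged restriction, then a degree upgrade. But there is a genuine gap at the very first step, and it propagates. You apply Lemma~\ref{lem:slice-junta} to $f^{\leq k}$ with threshold $\tau = \Theta_{p,k}(\epsilon^{1/C_0})$, which gives $|J| = O(k/\tau) = O_{p,k}(\epsilon^{-1/C_0})$. This is \emph{not} $O_{p,k}(1)$: in the only interesting regime $\epsilon \to 0$ the set $J$ blows up, so the function $h$ you build is not a junta on a constant number of coordinates, contradicting both the statement of the theorem and your own later assertion that $|J| = O_{p,k}(1)$. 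The role of the dichotomy (Lemma~\ref{lem:dichotomy}) in the paper is precisely to resolve this tension: one runs Lemma~\ref{lem:slice-junta} with the \emph{constant} threshold $\tau \approx J_{p,k}$ (so $|J| = O(k/J_{p,k}) = O_{p,k}(1)$), uses $\Infs_{ij}[f^{\leq k}] \leq \Infs_{ij}[f] \leq \Infs_{ij}[f^{\leq k}] + 2\epsilon$ to transfer the bound to the Boolean function $f$, and then invokes the dichotomy to upgrade ``$\Infs_{ij}[f] < J_{p,k}$'' to ``$\Infs_{ij}[f] = O(\epsilon)$'' for all pairs outside $J$. Deferring the dichotomy to the end, as you do, leaves you with either a non-constant junta (your choice of small $\tau$) or only the useless bound $\Infs_{ij}[f^{\leq k}] < \tau$ with $\tau$ constant; neither suffices. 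A side benefit of $|J| = O_{p,k}(1)$ is that each restriction $x \in \{0,1\}^J$ has probability $\Omega_{p,k}(1)$ under $\nu_{pn}$ (for large $n$), so the paper gets influence and tail bounds for \emph{every} restriction, with no need for your ``good fraction of $A$'' Markov bookkeeping.

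Your final degree-upgrade step is also weaker than what is needed and is again undermined by the unbounded $|J|$. The paper does not modify $h$ at all: it writes $h$ as a Boolean function $H$ of the $M = O_{p,k}(1)$ junta coordinates, notes that $\deg h > k$ would force $\deg H > k$, and then uses Corollary~\ref{cor:harmonic-projection}(5) together with the fact that there are only \emph{finitely many} Boolean functions on $M$ coordinates to conclude $\|h^{>k}\| = \Omega_{p,k}(1)$ for large $n$ — contradicting the smallness of $\|h^{>k}\|$, so in fact $\deg h \leq k$ outright once $\epsilon$ is small and $n$ is large (the remaining parameter range being trivial). Your plan to ``replace $h$ by a Boolean function of degree $\leq k$ at negligible $L^2$ cost'' via the dichotomy gap is not an argument as stated, and the finiteness argument that would make it (or the paper's version) work is unavailable if $|J|$ grows with $1/\epsilon$. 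The repair is essentially to adopt the paper's ordering: constant junta threshold first, dichotomy immediately afterwards, and the finiteness-of-juntas argument for the degree claim.
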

\begin{proof}
 Let $F = f^{\leq k}$.
 We can assume that $2\epsilon < J_{p,k}/2$, since otherwise the theorem is trivial.
 Apply Lemma~\ref{lem:slice-junta} to $F$ with parameter $\tau = J_{p,k} - 2\epsilon > J_{p,k}/2$, obtaining a set $J$ of size $O(k/\tau) = O_{p,k}(1)$. It is not hard to check that
\[
 \Infs_{ij}[F] \leq \Infs_{ij}[f] \leq \Infs_{ij}[F] + 2\|f^{>k}\|^2 = \Infs_{ij}[F] + 2\epsilon.
\]
 Therefore if $i,j \notin J$ then $\Infs_{ij}[f] < \tau + 2\epsilon = J_{k,p}$, and so Lemma~\ref{lem:dichotomy} shows that $\Infs_{ij}[F] \leq \Infs_{ij}[f] = O(\epsilon)$.

 For $x \in \{0,1\}^J$, let $G_x$ and $g_x$ result from $F$ and $f$ (respectively) by restricting the coordinates in $J$ to the value $x$. It is not hard to check that $\Pr_{S \sim \nu_{pn}}[S|_J = x] \geq (p - O_p(|J|/n))^{|J|} = \Omega_{p,k}(1)$, as long as $n \geq N_{p,k}$ for some constant $N_{p,k}$; if $n \leq N_{p,k}$ then the theorem is trivial. We conclude that $\Infs_{ij}[G_x] = O_{p,k}(\epsilon)$ for all $i,j \notin J$ and $\|G_x-g_x\|^2 = \|g_x^{>k}\|^2 = O_{p,k}(\epsilon)$. Together these imply that $\Infs_{ij}[g_x] = O_{p,k}(\epsilon)$ for all $i,j \notin J$, and so $\Infs_i[g_x] = O_{p,k}(\epsilon)$ for all $i \notin J$.

 We can assume that $n - |J| \geq n/2$ (otherwise the theorem is trivial) and that the skew $p_x$ of the slice on which $G_x,g_x$ are defined satisfies $p_x = p \pm O_p(|J|/n) = \Theta(p)$, and so Theorem~\ref{thm:bourgain-slice} implies that either $\max_i \Infs_i[g_x] > W_{p,k}^{-1} \VV[g_x]^C$, or $n < 2W_{p,k}/\VV[g_x]^C$, or $\VV[g_x] = O(\sqrt{k} \|g_x^{>k}\|^2) = O_{p,k}(\epsilon)$. Since $\max_i \Infs_i[g_x] = O_{p,k}(\epsilon)$, we conclude that
\[ \VV[g_x] = O_{p,k}\left(\epsilon^{1/C} + \frac{1}{n^{1/C}}\right). \]

 Define a function $g$ by $g(S) = \EE[g_{S|_J}]$. The bound on $\VV[g_x]$ implies
\[ \|f-g\|^2 = O_{p,k}\left(\epsilon^{1/C} + \frac{1}{n^{1/C}}\right). \]
 If we let $h = \sgn g$ then we obtain the desired bound $\|f-h\|^2 \leq 4\|f-g\|^2$.

 \medskip

 It remains to show that $h$ has degree $k$ if $\epsilon$ is small enough and $n$ is large enough. We can assume without loss of generality that $J = [M]$, where $M$ is the bound on $|J|$. We have $\|f-h\|^2 \geq \|f^{>k} - h^{>k}\|^2 \geq (\|h^{>k}\| - \sqrt{\epsilon})^2$. Therefore
\[
 \|h^{>k}\| \leq \sqrt{\epsilon} + O_{p,k}\left(\epsilon^{1/2C} + \frac{1}{n^{1/2C}}\right).
\]
 On the other hand, we can write $h$ as a Boolean function $H$ of $x_1,\ldots,x_M$. Lemma~\ref{lem:monomial-invariance} shows that $\deg h \leq \deg H$, and so $\deg h > k$ implies that $\deg H > k$. Corollary~\ref{cor:harmonic-projection}(5) implies that for large enough $n$, $\|h^{>k}\| = \Omega_{p,H}(1)$. Since there are only finitely many Boolean functions on $x_1,\ldots,x_M$ which can play the role of $H$, we conclude that if $\epsilon$ is small enough and $n$ is large enough then $\deg h \leq k$.
\end{proof}

We conjecture that Theorem~\ref{thm:kindler-safra} holds with an error bound of $O_{p,k}(\epsilon)$ rather than $O_{p,k}(\epsilon^{1/C} + 1/n^{1/C})$.

\section{\texorpdfstring{$t$}{t}-Intersecting families} \label{sec:t-intersecting}

As an application of Theorem~\ref{thm:kindler-safra}, we prove a stability result for the $t$-intersecting Erd\H{o}s--Ko--Rado theorem, along the lines of Friedgut~\cite{Friedgut}. We start by stating the $t$-intersecting Erd\H{o}s--Ko--Rado theorem, which was first proved by Wilson~\cite{Wilson}.

\begin{theorem}[{\cite{Wilson}}] \label{thm:tEKR}
Let $t \geq 1$, $k \geq t$, and $n \geq (t+1)(k-t+1)$. Suppose that the family $\cF \subseteq \binom{[n]}{k}$ is $t$-intersecting: every two sets in $\cF$ have at least $t$ points in common. Then:
\begin{enumerate}[(a)]
\item $|\cF| \leq \binom{n-t}{k-t}$.
\item If $n > (t+1)(k-t+1)$ and $|\cF| = \binom{n-t}{k-t}$ then $\cF$ is a $t$-star: a family of the form
\[ \cF = \{ A \in \binom{[n]}{k} : S \subseteq A \}, \quad |S| = t. \]
\item If $t \geq 2$, $n = (t+1)(k-t+1)$ and $|\cF| = \binom{n-t}{k-t}$ then $\cF$ is either a $t$-star or a $(t,1)$-Frankl family:
\[ \cF = \{ A \in \binom{[n]}{k} : |A \cap S| \geq t+1 \}, \quad |S| = t+2. \]
\end{enumerate}
\end{theorem}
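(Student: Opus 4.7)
The plan is to follow Wilson's original spectral approach via the Johnson association scheme. Consider the graph $G_t$ on the vertex set $\binom{[n]}{k}$ whose edges are pairs $\{A,B\}$ with $|A\cap B| < t$; a $t$-intersecting family is precisely an independent set in $G_t$. I would bound $|\cF|$ by a weighted Hoffman / Delsarte LP bound applied to $G_t$, and then characterize the equality cases by exploiting the explicit structure of the Johnson eigenspaces (the same harmonic decomposition already used throughout the paper).

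For the size bound (a), observe that $G_t$ is a nonnegative combination of the distance matrices $A_j$ of the Johnson scheme for $k-t+1 \leq j \leq k$. These $A_j$ share the common eigenspaces $V_0, V_1, \ldots, V_k$, where $V_i$ is the degree-$i$ homogeneous part of harmonic multilinear polynomials on the slice (cf.\ Lemma~\ref{lem:slice-harmonic} and Lemma~\ref{lem:harmonic-expansion}), and the corresponding eigenvalues are Eberlein polynomial evaluations. The task is to produce nonnegative weights $w_j$ and a constant $c$ such that $M := cI - \sum_j w_j A_j \succeq 0$ with constant-function eigenvector at eigenvalue $0$; feeding $\charf{\cF}$ into $M$ and using independence yields the bound. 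The hypothesis $n \geq (t+1)(k-t+1)$ is exactly what makes the weights feasible and forces the extremal eigenvalue to be attained on $V_t$, giving $|\cF| \leq \binom{n-t}{k-t}$.

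For the equality characterizations in (b) and (c), tightness of the Hoffman bound forces $\charf{\cF} - \EE_{\nu_k}[\charf{\cF}]$ to lie in the sum of those eigenspaces $V_i$ on which the extremal eigenvalue is attained. When $n > (t+1)(k-t+1)$, only $V_t$ is extremal, so $\charf{\cF}$, after centering, is harmonic of degree $\leq t$; a short combinatorial argument (or an explicit computation in the $\chi_B$ basis of Definition~\ref{def:harmonic-expansion}) then shows that the only $\{0,1\}$-valued functions in $V_0 \oplus \cdots \oplus V_t$ of measure $\binom{n-t}{k-t}/\binom{n}{k}$ are indicators of $t$-stars, proving (b). At the boundary $n = (t+1)(k-t+1)$ with $t\geq 2$, a second eigenspace (namely $V_{t+1}$) also attains the extremal eigenvalue, enlarging the affine space of candidate extremizers. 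A case analysis identifies the admissible $\{0,1\}$-valued extremal vectors as precisely the $t$-stars together with the $(t,1)$-Frankl families, proving (c).

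The main obstacle is the classification of Boolean extremal vectors at the boundary regime of part (c). Once the LP certificate is written down, the size bound (a) reduces to a mechanical Eberlein-polynomial computation, and the uniqueness in (b) follows from a single-space constraint plus low-degree structure. But in (c), enumerating the $\{0,1\}$-valued solutions inside the larger extremal subspace $V_0 \oplus V_t \oplus V_{t+1}$ requires combining the spectral constraints with an Ahlswede--Khachatrian-style compression/shifting argument (or an equivalent pushing-pulling argument) to reduce the possibilities to a finite explicit list and verify that only the two named families arise. This combinatorial step is where most of the delicacy of Theorem~\ref{thm:tEKR} is concentrated.
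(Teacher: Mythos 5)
The paper does not prove this theorem; it quotes it from Wilson~\cite{Wilson} (the $t=1$ case is the original Erd\H{o}s--Ko--Rado theorem, and Ahlswede--Khachatrian settled the full parameter range). So there is no internal proof to compare against. What the paper does record, as Theorem~\ref{thm:wilson}, is the eigenvalue data of Wilson's pseudo-adjacency matrix, which is exactly the ingredient your sketch invokes, and your outline does follow Wilson's Hoffman/Delsarte-type route.

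There is, however, a substantive error in the boundary case (c). By Theorem~\ref{thm:wilson}, Wilson's matrix $A$ has $\lambda_1 = \cdots = \lambda_t = 0$ always, $\lambda_{t+1} > \lambda_{t+2}$ and $\lambda_e > \lambda_{t+2}$ for all $e > t+2$, and $\lambda_{t+2} \geq 0$ with equality if and only if $n = (t+1)(k-t+1)$. So the eigenspace that becomes degenerate at the boundary is $V_{t+2}$, not $V_{t+1}$; the eigenvalue on $V_{t+1}$ stays strictly positive throughout the admissible range. This matters: the indicator of a $(t,1)$-Frankl family depends on $t+2$ coordinates and has harmonic degree exactly $t+2$, and the extremality argument forces $f^{=t+1} = 0$, so the candidate extremizers live in $V_0 \oplus V_1 \oplus \cdots \oplus V_t \oplus V_{t+2}$. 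If you instead search $V_0 \oplus \cdots \oplus V_t \oplus V_{t+1}$ you will never see the $(t,1)$-Frankl families and the classification in (c) fails. Relatedly, when you write ``$V_0 \oplus V_t \oplus V_{t+1}$'' you have dropped $V_1, \ldots, V_{t-1}$, all of which also sit in the degenerate subspace (every one of $\lambda_1, \ldots, \lambda_t$ vanishes, not only $\lambda_t$). The rest of the sketch is pitched at a reasonable level, but the hardest step --- turning the spectral containment of $\charf{\cF}$ into the finite list of extremal Boolean families, especially in (c) --- is, as you acknowledge, only gestured at.
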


The case $t = 1$ is the original Erd\H{o}s--Ko--Rado theorem~\cite{EKR}.
Ahlswede and Khachatrian~\cite{AK2,AK3} found the optimal $t$-intersecting families for all values of $n,k,t$.

A stability version of Theorem~\ref{thm:tEKR} would state that if $|\cF| \approx \binom{n-t}{k-t}$ then $\cF$ is close to a $t$-star. Frankl~\cite{Frankl87} proved an optimal such result for the case $t = 1$. Friedgut~\cite{Friedgut} proved a stability result for all $t$ assuming that $k/n$ is bounded away from $1/(t+1)$.

\begin{theorem}[{\cite{Friedgut}}] \label{thm:tEKR-Friedgut}
 Let $t \geq 1$, $k \geq t$, $\lambda,\zeta > 0$, and $\lambda n < k < (\frac{1}{t+1} - \zeta) n$.
 Suppose $\cF \subseteq \binom{[n]}{k}$ is a $t$-intersecting family of measure $|\cF| = \binom{n-t}{k-t} - \epsilon \binom{n}{k}$. Then there exists a family $\cG$ which is a $t$-star such that
 \[ \frac{|\cF \triangle \cG|}{\binom{n}{k}} = O_{t,\lambda,\zeta}(\epsilon). \]
\end{theorem}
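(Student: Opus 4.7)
The plan is to follow Friedgut's spectral stability argument. Let $f = \charf{\cF}\colon \binom{[n]}{k} \to \{0,1\}$, so $\mu := \EE_{\nu_k}[f] = \binom{n-t}{k-t}/\binom{n}{k} - \epsilon$, and the $t$-intersection condition translates into $f(A)f(B) = 0$ whenever $|A \cap B| < t$.

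First, I would use the Hoffman-type spectral approach underlying Wilson's proof of the $t$-intersecting EKR theorem. One builds an operator $M$ on $\binom{[n]}{k}$ that is a nonnegative combination of Johnson scheme matrices $A_j$ with $j < t$ (where $A_j(A,B) = \charf{|A \cap B| = j}$), tuned so that: (i) $\langle Mf, f\rangle_{\nu_k} = 0$ whenever $\cF$ is $t$-intersecting; (ii) the spectrum of $M$ with respect to the harmonic decomposition takes the form $\langle Mg,g\rangle = \sum_{d=0}^{k} \lambda_d \|g^{=d}\|^2$, with eigenvalues realising the Wilson bound $\alpha \leq \binom{n-t}{k-t}/\binom{n}{k}$ (with equality for $t$-stars) and with a strict spectral gap $\Delta = \Delta(t,\lambda,\zeta) > 0$ separating the top levels from the $\lambda_d$ for $d > t$. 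The hypothesis $k/n < 1/(t+1) - \zeta$ is essential here: at $k/n = 1/(t+1)$ the $(t,1)$-Frankl family provides a degree-$(t+1)$ eigenvector with the same eigenvalue, collapsing the gap. Combining $\langle Mf,f\rangle = 0$ with the near-extremality of $\mu$ and the gap $\Delta$ yields the Fourier concentration estimate $\|f^{>t}\|_{\nu_k}^2 = O_{t,\lambda,\zeta}(\epsilon)$.

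Second, I would invoke a Kindler--Safra type junta theorem. Friedgut moves the problem from the slice to the Boolean cube $\mu_p$ with $p = k/n$: since $p \in [\lambda, 1-\lambda]$ is bounded away from $0$ and $1$, the L2 invariance for low-degree harmonic multilinear polynomials (Corollary~\ref{cor:l2-invariance-harmonic}) transfers the bound $\|f^{>t}\|^2 = O_{t,\lambda,\zeta}(\epsilon)$ to the $\mu_p$-setting, where the Boolean-cube Kindler--Safra theorem produces a Boolean $h\colon\{0,1\}^n\to\{0,1\}$ of degree at most $t$ depending on $J = O_{t,\lambda,\zeta}(1)$ coordinates with $\|f - h\|_{\mu_p}^2 = O_{t,\lambda,\zeta}(\epsilon)$. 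Restricting back to the slice yields a junta-defined family $\mathcal{H} \subseteq \binom{[n]}{k}$ with $|\cF \triangle \mathcal{H}|/\binom{n}{k} = O_{t,\lambda,\zeta}(\epsilon)$.

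Third, I would identify $\mathcal{H}$ as a $t$-star. Since $\cF$ is $t$-intersecting and differs from $\mathcal{H}$ on at most $O_{t,\lambda,\zeta}(\epsilon)\binom{n}{k}$ sets, only a negligible fraction of pairs in $\mathcal{H}$ can intersect in fewer than $t$ elements. Because $\mathcal{H}$ is determined by the values of a $J$-junta with $J = O_{t,\lambda,\zeta}(1)$, one may enumerate the finitely many possible junta cores and apply Ahlswede--Khachatrian: the regime $k/n < 1/(t+1) - \zeta$ strictly excludes $(t,s)$-Frankl families as maximal $t$-intersecting families, forcing $\mathcal{H}$ to be a $t$-star $\cG$, and giving $|\cF \triangle \cG|/\binom{n}{k} = O_{t,\lambda,\zeta}(\epsilon)$.

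The main obstacle is the spectral step: one must construct $M$ and control its eigenvalues and spectral gap uniformly in $t,\lambda,\zeta$, which amounts to a careful analysis of the Johnson scheme eigenvalues (essentially Wilson's computation in~\cite{Wilson}) together with an explicit estimate of how fast the gap $\Delta$ shrinks as $k/n \to 1/(t+1)$. Once the Fourier concentration $\|f^{>t}\|^2 = O(\epsilon)$ is in hand, the junta-approximation and combinatorial identification steps reduce to standard applications of Kindler--Safra and Ahlswede--Khachatrian. It is precisely the degeneracy of this spectral gap at $k/n = 1/(t+1)$ that the authors' own Theorem~\ref{thm:tEKR-stability} is designed to circumvent, by replacing the cube Kindler--Safra theorem with its slice analogue.
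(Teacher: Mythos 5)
The paper does not actually prove this statement---it is quoted from Friedgut \cite{Friedgut}---so the only internal point of comparison is the proof of Theorem~\ref{thm:tEKR-stability}, whose architecture (Wilson's matrix giving $\|f^{>t}\|^2=O(\epsilon)$, then a Kindler--Safra junta approximation, then identification of the junta via the $\mu_p$ Ahlswede--Khachatrian theorem) your sketch correctly mirrors. Your spectral step is sound: in the regime $k<(\frac{1}{t+1}-\zeta)n$ one has $r=n-(t+1)(k-t+1)=\Omega_{t,\zeta}(n)$, so Lemma~\ref{lem:tEKR-ev} gives $\lambda_{t+2}=\Omega_{t,\lambda,\zeta}(1)$ and Lemma~\ref{lem:stability-fourier} yields $\|f^{>t}\|_{\nu_k}^2=O_{t,\lambda,\zeta}(\epsilon)$.

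The genuine gap is your second step. Corollary~\ref{cor:l2-invariance-harmonic} only compares $\nu_{pn}$ and $\mu_p$ norms of harmonic polynomials of degree $d$ with error $O(d^2/p(1-p)n)$; the harmonic representation of $f$ has degree up to $k=\Theta(n)$, so the corollary says nothing about $\|f^{>t}\|_{\mu_p}$, and, worse, the harmonic extension of $f$ is not Boolean-valued off the slice, so the Boolean-cube Kindler--Safra theorem simply cannot be applied to it. This bridge is exactly the crux: Friedgut's actual proof stays in the cube from the start (he proves a $\mu_p$-stability theorem for genuinely Boolean $t$-intersecting families and transfers to the slice via monotone closure, which is where the hypothesis that $k/n$ be bounded away from $1/(t+1)$ is used a second time), while this paper instead proves a Kindler--Safra theorem \emph{on the slice} (Theorem~\ref{thm:kindler-safra})---but that version only gives error $O(\epsilon^{1/C}+n^{-1/C})$, so substituting it would not recover the $O_{t,\lambda,\zeta}(\epsilon)$ bound you are claiming. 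A second, smaller gap is in your identification step: since $k=\Theta(n)$, two members of $\mathcal{H}$ whose junta patterns fail to $t$-intersect will typically still intersect in $\Theta(n)$ coordinates outside the junta, so ``only a negligible fraction of pairs intersect in fewer than $t$ elements'' gives no contradiction and does not show the junta core is $t$-intersecting; one needs the cross-intersecting fiber argument (as in the paper's use of Theorem~\ref{thm:cross-intersecting}, showing that a non-$t$-intersecting pair of patterns forces an $\Omega(1)$ fraction of one fiber to be missing from $\cF$) or Friedgut's analogous cube argument.
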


Careful inspection of Friedgut's proof shows that it is meaningful even for sub-constant $\zeta$, but only as long as $\zeta = \omega(1/\sqrt{n})$. We prove a stability version of Theorem~\ref{thm:tEKR} which works all the way up to $\zeta = 0$.

\begin{theorem} \label{thm:tEKR-stability}
Let $t \geq 2$, $k \geq t+1$ and $n = (t+1)(k-t+1) + r$, where $r > 0$. Suppose that $k/n \geq \lambda$ for some $\lambda > 0$.
Suppose $\cF \subseteq \binom{[n]}{k}$ is a $t$-intersecting family of measure $|\cF| = \binom{n-t}{k-t} - \epsilon \binom{n}{k}$. Then there exists a family $\cG$ which is a $t$-star or a $(t,1)$-Frankl family such that
\[ \frac{|\cF \triangle \cG|}{\binom{n}{k}} = O_{t,\lambda}\left(\max\left(\left(\frac{k}{r}\right)^{1/C},1\right)\epsilon^{1/C} + \frac{1}{n^{1/C}} \right), \]
for some constant $C$.

Furthermore, there is a constant $A_{t,\lambda}$ such that $\epsilon \leq A_{t,\lambda} \min(r/k,1)^{C+1}$ implies that $\cG$ is a $t$-star.
\end{theorem}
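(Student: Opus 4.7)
The plan is to adapt Friedgut's argument for the Boolean cube to the slice, substituting Theorem~\ref{thm:kindler-safra} for its Boolean counterpart and Wilson's eigenvalue formulas for the cube--based spectral estimates. Let $f = \charf{\cF}\colon\binom{[n]}{k}\to\{0,1\}$, viewed as a harmonic multilinear polynomial via Lemma~\ref{lem:slice-harmonic}.

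The first step is a Hoffman--type spectral inequality yielding
\[
 \|f^{>t}\|^2_{\nu_k} \le O_{t,\lambda}\!\left(\max\!\left(\tfrac{k}{r},1\right)\epsilon\right).
\]
Following Wilson and Friedgut, I would exhibit an operator $M$ on $\binom{[n]}{k}$ such that (a)~$M_{A,B}=0$ whenever $|A\cap B|\ge t$, so that $\langle f,Mf\rangle = 0$ by $t$-intersection; (b)~$M$ is diagonalized by the primitive idempotents of the Johnson scheme, with eigenvalues read off Wilson's closed forms; (c)~the top eigenspace of $M$ is exactly the span of harmonic polynomials of degree $\le t$, and the relative gap between the top eigenvalue and the next is $\Theta_t(\min(r/k,1))$. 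A standard Hoffman manipulation of $\langle f,Mf\rangle=0$, combined with $\EE_{\nu_k}[f] = \binom{n-t}{k-t}/\binom{n}{k} - \epsilon$, then yields the displayed bound.

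With this estimate in hand, Theorem~\ref{thm:kindler-safra}, invoked with its parameter $k$ set to $t$, delivers a Boolean function $h\colon\binom{[n]}{k}\to\{0,1\}$ of degree $\le t$ depending on a set $J$ with $|J| = O_{t,\lambda}(1)$, satisfying
\[
 \|f-h\|^2 = O_{t,\lambda}\!\left(\max\!\left(\tfrac{k}{r},1\right)^{1/C}\epsilon^{1/C} + \tfrac{1}{n^{1/C}}\right),
\]
which is already the error bound claimed in the theorem. Since $h$ is invariant under permutations fixing $J$, its value depends only on $|A\cap J|$; via Corollary~\ref{cor:slice-degree} and Corollary~\ref{cor:harmonic-projection} I transfer the slice--degree bound into a degree bound on the Boolean representation of $h$ on $J$. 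A short finite case analysis---using that $\nu_k(\cG) = (1-o_{t,\lambda}(1))\binom{n-t}{k-t}/\binom{n}{k}$ and that $\cG$ is $t$-intersecting up to the approximation error---then narrows the options for $\cG = h^{-1}(1)$ to $t$-stars ($|J|=t$, $T=\{t\}$) and $(t,1)$-Frankl families ($|J|=t+2$, $T=\{t+1,t+2\}$).

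For the ``furthermore'' clause I compare measures directly. A $(t,1)$-Frankl family has $\nu_k$-measure equal to $\binom{n-t}{k-t}/\binom{n}{k}$ minus a deficit $D = \Theta_t(\min(r/k,1)) \cdot \binom{n-t}{k-t}/\binom{n}{k}$. If $\cG$ were a $(t,1)$-Frankl family, the triangle inequality $|\nu_k(\cF) - \nu_k(\cG)| \le |\cF\triangle\cG|/\binom{n}{k}$ would force $D \le \epsilon + O_{t,\lambda}(\max(k/r,1)^{1/C}\epsilon^{1/C} + n^{-1/C})$; choosing $A_{t,\lambda}$ small enough and assuming $\epsilon \le A_{t,\lambda}\min(r/k,1)^{C+1}$ violates this, so $\cG$ must be a $t$-star. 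The main obstacle I anticipate is the spectral step: extracting the uniform $\Theta_t(\min(r/k,1))$ gap estimate from Wilson's formulas throughout the range $r\ge 1$, especially in the degenerate regime $r\ll k$ that Friedgut's original argument sidesteps by requiring $k/n$ bounded away from $1/(t+1)$. A secondary subtlety is the classification, since ``degree $t$ on the slice'' is weaker than ``degree $t$ on the cube'', forcing us to route through the harmonic-projection machinery of Section~\ref{sec:harmonic-projection}.
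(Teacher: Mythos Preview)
Your overall architecture---spectral/Hoffman bound from Wilson's matrix, then Kindler--Safra on the slice, then a classification of the resulting junta, then a measure comparison for the ``furthermore'' clause---is exactly the paper's strategy, and your anticipation of the eigenvalue gap $\lambda_{t+2}=\Theta_t(\min(r/k,1))$ as the key technical ingredient is on target (the paper carries this out explicitly in Lemma~\ref{lem:tEKR-ev}, with Lemma~\ref{lem:measure-deficit} doing the analogous job for the measure deficit in the final step).

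The place where you diverge from the paper, and where your sketch is thinnest, is the classification of $\cG$. You propose to use the degree bound $\deg h\le t$ together with ``$\cG$ is $t$-intersecting up to approximation error'' and a finite case analysis. The paper does \emph{not} use the degree bound here at all. Instead it argues in two crisp steps: first, if $\cJ=\cG|_{[J]}$ were not $t$-intersecting, pick $A,B\in\cJ$ with $|A\cap B|<t$; then the fibres $\cA,\cB\subseteq\cF$ over $A,B$ are cross-intersecting in $\binom{[n]\setminus J}{\,\cdot\,}$, and Frankl--Tokushige (Theorem~\ref{thm:cross-intersecting}) forces $|\cA|+|\cB|$ to be so small that $|\cF\triangle\cG|/\binom{n}{k}=\Omega_{t,\lambda}(1)$, contradicting the Kindler--Safra bound. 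Second, once $\cJ$ is \emph{exactly} $t$-intersecting, the $\mu_p$ uniqueness statement (Theorem~\ref{thm:tEKR-mup}) plus Lemma~\ref{lem:measures-close} forces $\cG$ to be a $t$-star or a $(t,1)$-Frankl family, again using that any other $t$-intersecting $\cJ$ would have $\mu_p(\cJ)\le p^t-\gamma$ for some $\gamma>0$ uniform over the finitely many candidates.

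Your route via the degree bound is not obviously wrong, but it is underspecified: ``$t$-intersecting up to approximation error'' is not a condition on $\cJ$ alone, and transferring slice-degree $\le t$ to cube-degree $\le t$ on $\{0,1\}^J$ via Corollary~\ref{cor:harmonic-projection} gives only an approximate statement, not an exact one that you can feed into a finite enumeration. The cross-intersecting step is what turns ``approximately $t$-intersecting'' into ``exactly $t$-intersecting'', and without it (or a substitute) the classification does not close.
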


We do not know whether the error bound we obtain is optimal. We conjecture that Theorem~\ref{thm:tEKR-stability} holds with an error bound of $O_{t,\lambda}(\max(k/r,1)\epsilon)$.

Friedgut's approach proceeds through the $\mu_p$ version of Theorem~\ref{thm:tEKR}, first proved by Dinur and Safra~\cite{DinurSafra} as a simple consequence of the work of Ahlswede and Khachatrian. The special case $p = 1/d$ (where $d \geq 3$) also follows from earlier work of Ahlswede and Khachtrian~\cite{AK4}, who found the optimal $t$-agreeing families in $\ZZ_d^n$.

\begin{theorem}[{\cite{DinurSafra},\cite{Friedgut},\cite{FThesis}}] \label{thm:tEKR-mup}
 Let $t \geq 1$ and $p \leq 1/(t+1)$. Suppose that $\cF \subseteq \{0,1\}^n$ is $t$-intersecting. Then:
\begin{enumerate}[(a)]
\item $\mu_p(\cF) \leq p^t$~\cite{DinurSafra}.
\item If $p < 1/(t+1)$ and $\mu_p(\cF) = p^t$ then $\cF$ is a $t$-star~\cite{Friedgut}.
\item If $t \geq 2$, $p = 1/(t+1)$ and $\mu_p(\cF) = p^t$ then $\cF$ is either a $t$-star or a $(t,1)$-Frankl family~\cite{FThesis}.
\end{enumerate}
\end{theorem}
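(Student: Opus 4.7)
The plan is to deduce this biased-measure bound by adapting Ahlswede--Khachatrian's ``push--pull'' argument from uniform slices to the biased cube under $\mu_p$. Since the left-shift operator $S_{ij}$ (with $i<j$) preserves both the $t$-intersecting property and $\mu_p$-measure of a family, we may assume WLOG that $\cF$ is left-shifted. The target reduction is to show that among shifted $t$-intersecting families on $\{0,1\}^n$ with $p \leq 1/(t+1)$, the $\mu_p$-maximum is attained by a $(t,r)$-Frankl family $\cF_{t,r} = \{A \subseteq [n] : |A \cap [t+2r]| \geq t+r\}$ for some $r \geq 0$ (generalizing the $(t,1)$-Frankl family defined in the paper).

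The core lemma, the push--pull step, says: if $\cF$ is shifted, $t$-intersecting and not yet of $(t,r)$-Frankl form for any $r$, then there exist specific ``pushes'' (adjoining sets of the form $A \cup \{i\}$ for suitably small $i$) and ``pulls'' (removing sets that rely on large indices) which together preserve $t$-intersection and do not decrease $\mu_p(\cF)$. The content is that the $\mu_p$-weight gained from pushes dominates the weight lost from pulls, which reduces to a polynomial condition in $p$ that holds precisely when $p \leq 1/(t+1)$. Iterating this step brings $\cF$ to a $(t,r)$-Frankl family. Part (a) is then completed by a direct calculation: $\mu_p(\cF_{t,r}) = \Pr[\mathrm{Bin}(t+2r,p) \geq t+r]$, and comparing consecutive values of $r$ shows that this expression is strictly decreasing in $r$ when $p < 1/(t+1)$ and takes the common value $p^t$ at both $r=0$ and $r=1$ exactly when $p = 1/(t+1)$. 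Parts (b) and (c) then follow by tracking equality: for $p < 1/(t+1)$, uniqueness in the push--pull step forces $\cF$ to be exactly a $t$-star; at $p = 1/(t+1)$, the only persistent ties are between $\cF_{t,0}$ and $\cF_{t,1}$.

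The main technical obstacle is the push--pull lemma. On a uniform slice it is a clean combinatorial swap, but on $\{0,1\}^n$ under $\mu_p$ the weights are non-uniform, so one has to track carefully how each atom $p^a(1-p)^b$ is gained or lost during a replacement; organizing sets into equivalence classes under a colex-like order induced by the shift should let us check the measure change class by class, with the polynomial inequality in $p$ emerging from a term-by-term comparison. The uniqueness conclusions in parts (b) and (c) require a stability refinement: any family achieving equality must admit no non-trivial push--pull without strict loss, which pins down its structure up to relabeling of $[n]$, and at the boundary $p=1/(t+1)$ one must in addition rule out ``hybrid'' configurations mixing $\cF_{t,0}$ and $\cF_{t,1}$ by invoking the $t$-intersecting condition on pairs taken from both components.
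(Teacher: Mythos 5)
The paper does not prove Theorem~\ref{thm:tEKR-mup}; it cites it as an external result from Dinur--Safra~\cite{DinurSafra} (part~(a)), Friedgut~\cite{Friedgut} (part~(b)), and Filmus's thesis~\cite{FThesis} (part~(c)), and the surrounding text explicitly notes that Dinur and Safra obtained part~(a) \emph{``as a simple consequence of the work of Ahlswede and Khachatrian.''} That is, the cited proof of part~(a) is a \emph{reduction}: one blows the $t$-intersecting family $\cF \subseteq \{0,1\}^n$ up to a slice $\binom{[N]}{K}$ of a much larger ground set with $K/N \approx p$ and $N > (t+1)(K-t+1)$, applies the Ahlswede--Khachatrian bound on that slice, and observes that the slice measure converges to $\mu_p(\cF)$. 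Your plan is to instead port the push--pull (generating/merging) argument of Ahlswede--Khachatrian \emph{directly} to the biased cube, with weights $p^{|A|}(1-p)^{n-|A|}$. That is a genuinely different route, and in fact a more ambitious one: you aim to show the $\mu_p$-maximum among shifted $t$-intersecting families is attained by a $(t,r)$-Frankl family, which is stronger than the bound $\mu_p(\cF) \leq p^t$ that part~(a) asks for.

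The difficulty is that your central assertion --- the push--pull lemma under $\mu_p$ --- is stated but not established, and that is exactly where all the content lives. On a slice every set has the same weight, so Ahlswede--Khachatrian's argument reduces to counting how many sets are added versus removed; under $\mu_p$ the weight of each set depends on its cardinality, so a push $A \mapsto A \cup \{i\}$ changes the weight by a factor of $p/(1-p)$, and the gain/loss comparison becomes a weighted inequality whose truth is far from immediate. You acknowledge this (``one has to track carefully how each atom $p^a(1-p)^b$ is gained or lost'') and gesture at an equivalence-class bookkeeping, but do not carry it out. A biased-measure shifting argument of this type is known to be delicate, and without the lemma your proposal does not actually yield part~(a); the Dinur--Safra reduction avoids this entirely by working on a slice where the atoms are uniform. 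Similarly, your treatment of parts~(b) and~(c) as ``tracking equality'' in the push--pull step understates the work involved. Friedgut's uniqueness proof for $p < 1/(t+1)$ and Filmus's handling of the boundary $p = 1/(t+1)$ (where $t$-stars and $(t,1)$-Frankl families tie and one must rule out all other extremizers, including non-shifted ones, by undoing the shift) are separate, non-trivial analyses rather than formal corollaries of the existence argument. Your calculation that $\mu_p(\cF_{t,0}) = \mu_p(\cF_{t,1}) = p^t$ exactly at $p = 1/(t+1)$ and that the Frankl-family measures are strictly decreasing in $r$ for $p < 1/(t+1)$ is correct and is a useful sanity check, but it sits at the end of the argument, after the hard lemma you have not proved.
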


Friedgut~\cite{Friedgut} deduces his stability version of Theorem~\ref{thm:tEKR} from a stability version of Theorem~\ref{thm:tEKR-mup}. While Friedgut's stability version of Theorem~\ref{thm:tEKR-mup} is meaningful for all $p < 1/(t+1)$, his stability version of Theorem~\ref{thm:tEKR} is meaningful only for $k/n < 1/(t+1) - \omega(1/\sqrt{n})$. A more recent stability result for compressed cross-$t$-intersecting families due to Frankl, Lee, Siggers and Tokushige~\cite{FLST}, using completely different techniques, also requires $k/n$ to be bounded away from $1/(t+1)$.

Friedgut's argument combines a spectral approach essentially due to Lov\'asz~\cite{Lovasz} with the Kindler--Safra theorem~\cite{KindlerSafra,Kindler}. Using Theorem~\ref{thm:kindler-safra} instead of the Kindler--Safra theorem, we are able to obtain a stability result for the entire range of parameters of Theorem~\ref{thm:tEKR}. We restrict ourselves to the case $t \geq 2$.

Our starting point is a calculation due to Wilson~\cite{Wilson}.

\begin{theorem}[{\cite{Wilson}}] \label{thm:wilson}
 Let $t \geq 2$, $k \geq t+1$, and $n \geq (t+1)(k-t+1)$. There exists an $\binom{[n]}{k} \times \binom{[n]}{k}$ symmetric matrix $A$ such that $A_{SS} = 1$ for all $S \in \binom{[n]}{k}$, $A_{ST} = 0$ for all $S \neq T \in \binom{[n]}{k}$ satisfying $|S \cap T| \geq t$, and for all functions $f\colon \binom{[n]}{k} \to \RR$,
\begin{multline*}
Af = \sum_{e=0}^k \lambda_e f^{=e}, \\ \lambda_e = 1 + (-1)^{t-1-e} \sum_{i=0}^{t-1} (-1)^i \binom{k-1-i}{k-t} \binom{k-e}{i} \binom{n-k-e+i}{k-e} \binom{n-k-t+i}{k-t}^{-1}.
\end{multline*}

The eigenvalues $\lambda_e$ satisfy the following properties:
\begin{enumerate}[(a)]
\item $\lambda_0 = \binom{n}{k} \binom{n-t}{k-t}^{-1}$.
\item $\lambda_1 = \cdots = \lambda_t = 0$.
\item $\lambda_{t+2} \geq 0$, with equality if and only if $n = (t+1)(k-t+1)$.
\item $\lambda_{t+1} > \lambda_{t+2}$ and $\lambda_e > \lambda_{t+2}$ for $e > t+2$.
\end{enumerate}
\end{theorem}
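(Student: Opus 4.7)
The plan is to construct $A$ explicitly inside the Bose--Mesner algebra of the Johnson scheme $J(n,k)$ and then read off its eigenvalues. Let $A_0,A_1,\ldots,A_k$ be the adjacency matrices of the scheme, where $(A_j)_{S,T}=1$ iff $|S\cap T|=k-j$. This algebra is $(k+1)$-dimensional and is simultaneously diagonalized by the decomposition $\mathbb{R}^{\binom{[n]}{k}}=\bigoplus_{e=0}^{k} V_e$ into the harmonic eigenspaces $V_e$ of $f^{=e}$'s, with $A_j$ acting on $V_e$ as a scalar $E_j(e)$ given by an Eberlein (dual Hahn) polynomial. Thus any matrix in the algebra decomposes both as $\sum_j c_j A_j$ and as $\sum_e \lambda_e E_e$, with $\lambda_e=\sum_j c_j E_j(e)$.

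I would first pin down the free parameters. Writing $A=\sum_{j=0}^{k} c_j A_j$, the normalization $A_{SS}=1$ forces $c_0=1$, and the vanishing requirement on off-diagonal entries with $|S\cap T|\geq t$ forces $c_j=0$ for $1\leq j\leq k-t$. This leaves the $t$ coefficients $c_{k-t+1},\ldots,c_k$ as free unknowns. I would determine them by imposing the $t$ linear constraints $\lambda_e=0$ for $e=1,\ldots,t$; invertibility of this $t\times t$ system follows from standard nondegeneracy of the dual Hahn polynomials at distinct nodes. Solving and substituting back into $\lambda_e=\sum_j c_j E_j(e)$ yields a closed form, which after simplification (repeatedly rewriting products of binomials as ratios of falling factorials and telescoping the alternating sum) matches the stated expression for $\lambda_e$.

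Verifying the four properties is then largely bookkeeping. Property (a) follows either by plugging $e=0$ into the formula or, more transparently, by observing that $\lambda_0$ is the common row sum of $A$ and that Theorem~\ref{thm:tEKR}(a) plus the Hoffman--Lovász ratio bound forces precisely $\lambda_0=\binom{n}{k}/\binom{n-t}{k-t}$. Property (b) holds by construction. Property (c) follows by specializing the formula at $e=t+2$: the inner alternating sum collapses to a single term proportional to $n-(t+1)(k-t+1)$, vanishing exactly at the Ahlswede--Khachatrian transition. Property (d), the ordering of the $\lambda_e$, reduces to comparing consecutive values of an explicit rational function in $e$; I would do this by analyzing the sign of $\lambda_{e+1}-\lambda_e$, tracked as a ratio of binomial coefficients.

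The main obstacle I expect is property (d), which requires genuine sign analysis of an alternating sum rather than routine algebra; properties (a)--(c) can be checked by direct substitution once the closed form is in hand, but monotonicity across $e>t+1$ needs either a clever factorization of the difference or an inductive argument on $e$ exploiting three-term recurrences of the dual Hahn polynomials. Since Wilson's paper~\cite{Wilson} carries out exactly these computations (and this is where the formula originates), the most economical route for this paper is to cite Wilson for the construction and the eigenvalue formula, and to verify (a)--(d) by inspection of that formula.
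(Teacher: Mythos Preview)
The paper does not prove this theorem at all; it is quoted directly from Wilson~\cite{Wilson} and used as a black box. Your closing sentence already anticipates this, and indeed that is exactly what the paper does: state the result with attribution and move on. Your sketch of how the proof would go (build $A$ in the Bose--Mesner algebra, solve for the free coefficients by imposing $\lambda_1=\cdots=\lambda_t=0$, then verify (a)--(d) from the closed form) is a faithful outline of Wilson's original argument, so there is no discrepancy to report.
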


Wilson's result actually needs $n \geq 2k$, but this is implied by our stronger assumption $k \geq t+1$ (Wilson only assumes that $k \geq t$) since $(t+1)(k-t+1)-2k = (t-1)(k-(t+1)) \geq 0$.

We need to know exact asymptotics of $\lambda_{t+2}$.

\begin{lemma} \label{lem:tEKR-ev}
 Let $t \geq 2$, $k \geq t+1$ and $n = (t+1+\rho)(k-t+1)$, where $\rho > 0$. Let $\lambda = \lambda_{t+2}$ be the quantity defined in Theorem~\ref{thm:wilson}. Then
\[
 \lambda = \Omega_t(\min(\rho,1)), \quad \lim_{\rho\to\infty} \lambda = 1.
\]
\end{lemma}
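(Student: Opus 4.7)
The plan is to treat $\lambda_{t+2}$ as an explicit rational function of $n$ (with $k,t$ as parameters) and extract its behaviour at the boundary $n_0 := (t+1)(k-t+1)$ and at infinity. First I would simplify: setting $e = t+2$ in the formula of Theorem~\ref{thm:wilson}, the binomial ratio telescopes to
\[
 \frac{\binom{n-k-t-2+i}{k-t-2}}{\binom{n-k-t+i}{k-t}} = \frac{(k-t)(k-t-1)}{(n-k-t+i)(n-k-t-1+i)},
\]
and, since $(-1)^{t-1-(t+2)} = -1$, this yields
\[
 \lambda_{t+2}(n) = 1 - (k-t)(k-t-1)\sum_{i=0}^{t-1} \frac{(-1)^i \binom{k-1-i}{t-1-i}\binom{k-t-2}{i}}{(n-k-t+i)(n-k-t-1+i)}.
\]
Over a common denominator this is $1 - P(n)/Q(n)$ with $Q(n) = \prod_{j=0}^{t}(n-k-t-1+j)$ of degree $t+1$ and $\deg P \leq t-1$, so the limit $\lim_{\rho\to\infty}\lambda = 1$ is immediate. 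The edge case $k = t+1$ is trivial because the prefactor $(k-t)(k-t-1)$ vanishes, leaving $\lambda_{t+2} \equiv 1$; I will assume $k \geq t+2$ henceforth.

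For the quantitative lower bound, I would use Theorem~\ref{thm:wilson}(c), which supplies $\lambda_{t+2}(n_0) = 0$, i.e.\ $P(n_0) = Q(n_0)$, so
\[
 \lambda_{t+2}(n) = (n - n_0)\,\frac{\tilde R(n)}{Q(n)}, \qquad \tilde R := \frac{Q - P}{n - n_0},
\]
with $\tilde R$ a polynomial of degree $t$. Introducing $m := k - t + 1$, $\rho := (n - n_0)/m$, and $u := 1/m$, the function $n\cdot \tilde R(n)/Q(n)$ becomes a two-variable rational function in $\rho, u$ that is smooth at $u = 0$; denote its $u = 0$ value by $\phi_t(\rho)$. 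The asymptotics already in hand force $\phi_t(\rho) \to 1$ as $\rho \to \infty$ and $\phi_t(0) > 0$. Granted the uniform bound $\phi_t(\rho) \geq c_t/(1+\rho)$ for some $c_t > 0$ depending only on $t$, one concludes
\[
 \lambda_{t+2}(n) = \rho \cdot \phi_t(\rho)\cdot(1 + O_t(u)) = \Omega_t\!\bigl(\min(\rho,1)\bigr),
\]
which is the first claim.

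The hard part will be verifying the uniform lower bound on $\phi_t$. The alternating sum defining $P$ is delicate: individual terms can be of order $k^{t-3}$, whereas the whole sum is $1/[(k-t)(k-t-1)]$ at $n = n_0$, so the computation must track a near-total cancellation. I would try either to collapse the sum via a hypergeometric identity (recognising it as a $_3F_2$ that telescopes), or to compute $\tilde R(n)/Q(n)$ by partial fractions using the explicit factorisation of $Q$ and then check that the resulting $\phi_t(\rho)$ is a manifestly positive ratio of polynomials in $\rho$ of the right asymptotic shape. A convenient sanity check is $t = 2$, where everything reduces to $\lambda_4(n) = 1 - (k-2)(k-3)(3n-k-11)/[(n-k-1)(n-k-2)(n-k-3)]$; after substitution this yields an explicit formula for $\phi_2(\rho)$ and confirms $\phi_2(0) = 9/8 + O(u)$ with $\phi_2(\rho) = 1 + O(1/\rho)$ at infinity, matching the shape of the claimed bound.
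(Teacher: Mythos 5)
There is a genuine gap. Your proof reduces the first claim to the assertion ``$\phi_t(\rho) \geq c_t/(1+\rho)$'' together with a uniform $(1+O_t(u))$ control on the $u$-dependence, and you explicitly flag this as ``the hard part'' and only list strategies (hypergeometric collapse, partial fractions) rather than carrying one out. But this lower bound \emph{is} the lemma: once you have factored out $(n-n_0)$, showing that the cofactor $\tilde R(n)/Q(n)$ is positive and bounded below uniformly in $k$ and $\rho$ is exactly the quantitative content of $\lambda = \Omega_t(\min(\rho,1))$. Because you start from Wilson's original alternating sum, $\tilde R$ arises from a near-total cancellation of terms that are individually of size $k^{t-3}$, and nothing in the proposal controls its sign, let alone its size, for intermediate $\rho$ or for small $k$. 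The assertion $\phi_t(0)>0$ is also only inferred, not derived. The $t=2$ sanity check shows the method is plausible, but a check at one value of $t$ does not bridge the gap for general $t$.

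The paper avoids this cancellation entirely by invoking Wilson's alternative identity (his eq.\ (4.5)), which expresses $\lambda$ as $1$ minus a sum of \emph{manifestly nonnegative} terms $\binom{k-t}{i+2}/\binom{n-k-t+i}{i+2}$. That single rewriting buys two things your route does not have: (i) monotonicity of $\lambda$ in $n$ (hence in $\rho$) is immediate, since each denominator $\binom{n-k-t+i}{i+2}$ increases with $n$; and (ii) for $\rho\le 1$ the ratio is uniformly $(t+\rho)^{-i-2}(1\pm O_t(1/k))$, and a clean binomial identity collapses the main term to $1-(1+\alpha)^t(1-t\alpha)$ with $\alpha=1/(t+\rho)$, giving $\lambda=\rho(t+1+\rho)^t/(t+\rho)^{t+1}\pm O_t(1/k)$. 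The remaining regime $\rho=O_t(1/k)$ — where the $O_t(1/k)$ error term could swamp $\rho$, the same subtlety your $(1+O_t(u))$ factor glosses over — is then handled by a separate, direct computation at $n=(t+1)(k-t+1)+1$ showing $\lambda=\Omega_t(1/k)$, after which monotonicity finishes. To repair your argument you would either need to reproduce Wilson's non-alternating identity (at which point you are essentially following the paper), or actually execute one of the strategies you only sketch, with explicit uniform control in both $\rho$ and $u=1/(k-t+1)$, including the regime $\rho\lesssim u$.
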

\begin{proof}
 Wilson~\cite[(4.5)]{Wilson} gives the following alternative formula for $\lambda$:
\[
 \lambda = 1 - \binom{t+1}{2} \sum_{i=0}^{t-1} \frac{2}{i+2} \binom{t-1}{i} \frac{\binom{k-t}{i+2}}{\binom{n-k-t+i}{i+2}}.
\]
 Algebraic manipulation shows that
\[
 \lambda = 1 - \sum_{i=0}^{t-1} (i+1) \binom{t+1}{i+2} \frac{\binom{k-t}{i+2}}{\binom{n-k-t+i}{i+2}}.
\]
 Calculation shows that $n-k-t = (t+\rho)(k-t+1)-2t+1$. Therefore
\[
 \lambda = 1 - \sum_{i=0}^{t-1} (i+1) \binom{t+1}{i+2} \frac{\binom{k-t}{i+2}}{\binom{(t+\rho)(k-t+1)-2t+1+i}{i+2}}.
\]
 This formula makes it clear that $\lim_{\rho\to\infty} \lambda = 1$, and that $\lambda$ is an increasing function of $\rho$.

 Assume now that $\rho \leq 1$.
 Then
\[
 \lambda = 1 - \sum_{i=0}^{t-1} (i+1) \binom{t+1}{i+2} (t+\rho)^{-i-2} \left(1 \pm O_t\left(\frac{1}{k}\right)\right).
\]
 Let us focus on the main term. Setting $\alpha = 1/(t+\rho)$, we have
\begin{align*}
 \sum_{i=0}^{t-1} (i+1) \binom{t+1}{i+2} \alpha^{i+2} &=
 \sum_{i=0}^{t-1} (i+2) \binom{t+1}{i+2} \alpha^{i+2} - \sum_{i=0}^{t-1} \binom{t+1}{i+2} \alpha^{i+2} \\ &=
 (t+1) \sum_{i=0}^{t-1} \binom{t}{i+1} \alpha^{i+2} - \sum_{i=0}^{t-1} \binom{t+1}{i+2} \alpha^{i+2} \\ &=
 (t+1) \alpha \sum_{i=1}^t \binom{t}{i} \alpha^i - \sum_{i=2}^{t+1} \binom{t+1}{i} \alpha^i \\ &=
 (t+1) \alpha ((1+\alpha)^t - 1) - ((1+\alpha)^{t+1} - 1 - (t+1)\alpha) \\ &=
 1 - (1+\alpha)^t(1-t\alpha).
\end{align*}
 Substituting $\alpha = 1/(t+\rho)$, we obtain
\[
 \sum_{i=0}^{t-1} (i+1) \binom{t+1}{i+2} (t+\rho)^{-i-2} =
 1 - \frac{(t+\rho+1)^t}{(t+\rho)^t} \frac{\rho}{t+\rho} =
 1 - \frac{\rho(t+1+\rho)^t}{(t+\rho)^{t+1}}.
\]
 Therefore when $\rho \leq 1$,
\[
 \lambda = \frac{\rho(t+1+\rho)^t}{(t+\rho)^{t+1}} \pm O_t\left(\frac{1}{k}\right).
\]
 In particular, we can find some constant $C_t$ such that
\[
 \lambda \geq \frac{(t+1+\rho)^t}{(t+\rho)^{t+1}} \left(\rho - \frac{C_t}{k}\right).
\]
 Therefore for $2C_t/k \leq \rho \leq 1$, we have $\lambda = \Omega_t(\rho)$. Since $\lambda$ is an increasing function of $\rho$, this shows that for $\rho \geq 2C_t/k$, we have $\lambda = \Omega_t(\min(\rho,1))$.

 In order to finish the proof, we handle the case $\rho \leq C_t/k$. Consider $n = (t+1)(k-t+1) + 1$.
 The value of $1 - \lambda$ in this case is
\begin{align*}
 1 - \lambda &= \sum_{i=0}^{t-1} \binom{t+1}{i+2} \binom{t-1}{i} \frac{\binom{k-t}{i+2}}{\binom{t(k-t+1)-2t+2+i}{i+2}} \\ &=
 \sum_{i=0}^{t-1} \binom{t+1}{i+2} \binom{t-1}{i} \frac{\binom{k-t}{i+2}}{\binom{t(k-t+1)-2t+1+i}{i+2}} \left(1-\frac{i+2}{t(k-t+1)-2t+2+i}\right).
\end{align*}
 The value of the last expression without the correction term $1-\frac{i+2}{t(k-t+1)-2t+2+i}$ is exactly~$1$ by Theorem~\ref{thm:wilson}, and so
\[
 \lambda \geq \frac{2}{t(k-t+1)-2t+2} = \Omega_t\left(\frac{1}{k}\right).
\]
 Since $\lambda$ is increasing in $\rho$, this shows that for all $\rho > 0$ we have $\lambda = \Omega_t(1/k)$. If also $\rho \leq C_t/k$ then this implies that $\lambda = \Omega_t(\rho)$, finishing the proof.
\end{proof}

We need a similar result comparing the measures of $t$-stars and $(t,1)$-Frankl families.

\begin{lemma} \label{lem:measure-deficit}
 Let $t \geq 2$, $k \geq t+1$ and $n = (t+1+\rho)(k-t)+t+1$, where $\rho > 0$. Let $m$ be the measure of a $t$-star, and let $m_1$ be the measure of a $(t,1)$-Frankl family. Then
\[
 \frac{m - m_1}{m} = \Omega_t(\min(\rho,1)), \quad \lim_{\rho\to\infty} \frac{m-m_1}{m} = 1.
\]
\end{lemma}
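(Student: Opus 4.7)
The plan is a direct binomial calculation using the hypothesis $n = (t+1+\rho)(k-t)+t+1$ to produce a clean closed form for $(m-m_1)/m$, after which the two asymptotics fall out by inspection. First I would write the two measures explicitly. A $t$-star fixes $t$ elements, giving $m = \binom{n-t}{k-t}/\binom{n}{k}$. A $(t,1)$-Frankl family is determined by a set $J$ of size $t+2$: a $k$-set $A$ belongs to the family iff $|A \cap J| \in \{t+1,t+2\}$, so its size is $(t+2)\binom{n-t-2}{k-t-1} + \binom{n-t-2}{k-t-2}$, giving $m_1$ after dividing by $\binom{n}{k}$.

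Next I would expand $\binom{n-t}{k-t}$ via Pascal's rule applied twice:
\[
\binom{n-t}{k-t} = \binom{n-t-2}{k-t-2} + 2\binom{n-t-2}{k-t-1} + \binom{n-t-2}{k-t}.
\]
Subtracting the size of the Frankl family cancels the $\binom{n-t-2}{k-t-2}$ term and leaves
\[
m - m_1 = \frac{\binom{n-t-2}{k-t} - t\binom{n-t-2}{k-t-1}}{\binom{n}{k}}.
\]
The identity $\binom{n-t-2}{k-t}/\binom{n-t-2}{k-t-1} = (n-k-1)/(k-t)$, combined with the hypothesis on $n$, which forces $n-k-1 = (t+\rho)(k-t)$, collapses the bracketed quantity to exactly $\rho\,\binom{n-t-2}{k-t-1}$. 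This is the one cancellation that drives the whole lemma: the parametrization was chosen precisely so that $\rho$ appears linearly in $m-m_1$.

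Dividing by $m$ and using $\binom{n-t-2}{k-t-1}/\binom{n-t}{k-t} = (k-t)(n-k)/((n-t)(n-t-1))$, then writing $s := k-t \geq 1$ together with $n-t = (t+1+\rho)s+1$, $n-t-1 = (t+1+\rho)s$, and $n-k = (t+\rho)s+1$, I obtain the closed form
\[
\frac{m-m_1}{m} = \frac{\rho\,((t+\rho)s+1)}{((t+1+\rho)s+1)(t+1+\rho)}.
\]

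From this expression both claims are immediate. For $0 < \rho \leq 1$, the numerator is at least $\rho t s$ while the denominator is at most $(t+3)s\cdot(t+2)$ (using $s \geq 1$), giving $\Omega_t(\rho)$. For $\rho \geq 1$, the bound $t+1+\rho \leq (t+2)\rho$ makes the denominator at most $(t+2)(t+3)\rho^2 s$ while the numerator is at least $\rho^2 s$, giving $\Omega_t(1)$. Combining the two regimes yields $(m-m_1)/m = \Omega_t(\min(\rho,1))$. For the limit, both numerator and denominator are asymptotic to $\rho^2 s$ as $\rho \to \infty$ with $t,s$ fixed, so the ratio tends to $1$. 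There is no substantive obstacle here beyond the algebraic bookkeeping, which is clean once one commits to the Pascal expansion above and the substitution $s = k-t$.
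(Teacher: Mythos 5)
Your proof is correct and takes essentially the same route as the paper's: express $(m-m_1)/m$ as an explicit rational function of $\rho$, $t$, and $s=k-t$, then read off the two asymptotics by bounding numerator and denominator in each regime. The Pascal's-rule decomposition is a slightly cleaner way to organize the same binomial algebra --- it isolates $\rho$ as a linear factor of $m-m_1$ immediately --- but the closed form you reach agrees with the paper's $\frac{r(r+t(k-t)+1)}{(n-t)(n-t-1)}$ under $r=\rho(k-t)$, and the subsequent estimation is the same.
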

\begin{proof}
 We have
\[
 m = \binom{n-t}{k-t}, \quad m_1 = (t+2)\binom{n-t-2}{k-t-1} + \binom{n-t-2}{k-t-2}.
\]
 Computation shows that
\[
 \frac{m - m_1}{m_1} = 1 - \frac{(t+2)(n-k)(k-t) + (k-t)(k-t-1)}{(n-t)(n-t-1)}.
\]
 If $n = (t+1)(k-t+1)+r$ then calculation shows that
\[
 \frac{m - m_1}{m_1} = \frac{r(r + t(k-t) + 1)}{(n-t)(n-t-1)} \geq \frac{n-t-1}{n-t} \frac{r(r + t(k-t))}{(n-t-1)^2}.
\]
 Substituting $r = (k-t)\rho$, we obtain
\[
 \frac{m - m_1}{m_1} \geq \frac{n-t-1}{n-t} \frac{(k-t)^2\rho(\rho + t)}{(k-t)^2(t+1+\rho)^2} = \frac{n-t-1}{n-t} \frac{\rho(t + \rho)}{(t + 1 + \rho)^2}.
\]
 This shows that $\lim_{\rho\to\infty} (m-m_1)/m_1 = 1$.
 Since $n \geq (t+1)(k-t+1) \geq t+2$ implies $(n-t-1)/(n-t) \geq 1/2$, we also get
\[
 \frac{m - m_1}{m_1} \geq \frac{\rho(t + \rho)}{2(t + 1 + \rho)^2}.
\]
 As $\rho \to \infty$, the lower bound tends to $1/2$, and in particular, we can find $c_t$ such that for $\rho \geq c_t$ we have $(m - m_1)/m_1 \geq 1/3$. When $\rho \leq c_t$, we clearly have $(m - m_1)/m_1 = \Omega_t(\rho)$, completing the proof.
\end{proof}

The method of Lov\'asz~\cite{Lovasz} as refined by Friedgut~\cite{Friedgut} allows us to deduce an upper bound on $\|f^{>t}\|^2$ for the characteristic function of a $t$-intersecting family.

\begin{lemma} \label{lem:stability-fourier}
Let $t \geq 2$, $k \geq t+1$ and $n = (t+1)(k-t+1) + r$, where $r > 0$.
Let $\cF \subseteq \binom{[n]}{k}$ be a $t$-intersecting family, and $f$ its characteristic function. Then
\[
\|f^{>t}\|^2 = O\left(\max\left(\frac{k}{r},1\right)\right) \cdot (m-\EE[f]), \text{ where } m = \frac{\binom{n-t}{k-t}}{\binom{n}{k}}.
\]
\end{lemma}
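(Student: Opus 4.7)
The plan is to apply the standard Lov\'asz--Friedgut spectral technique, using Wilson's matrix $A$ from Theorem~\ref{thm:wilson}, and then to convert the resulting inequality using the eigenvalue lower bound in Lemma~\ref{lem:tEKR-ev}. Concretely, I will compute the quadratic form $\langle f, Af \rangle$ in two different ways (combinatorially and spectrally) and equate them.

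On the combinatorial side, since $\cF$ is $t$-intersecting, any two distinct $S,T \in \cF$ satisfy $|S \cap T| \geq t$ and hence $A_{ST} = 0$; combined with $A_{SS} = 1$ and the fact that $f(S)f(T) = 0$ off $\cF \times \cF$, this yields $\sum_{S,T} A_{ST} f(S) f(T) = |\cF|$, so $\langle f, Af \rangle = |\cF|/\binom{n}{k} = \EE[f]$. On the spectral side, the homogeneous decomposition $f = \sum_e f^{=e}$ is orthogonal, and Theorem~\ref{thm:wilson} gives $\lambda_0 = 1/m$, $\lambda_1 = \cdots = \lambda_t = 0$, and $\lambda_e \geq \lambda_{t+2} \geq 0$ for every $e > t$. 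Equating the two expressions and using $\|f^{=0}\|^2 = \EE[f]^2$ yields
\[
\sum_{e > t} \lambda_e \|f^{=e}\|^2 \;=\; \EE[f] - \frac{\EE[f]^2}{m} \;=\; \frac{\EE[f]}{m}\bigl(m - \EE[f]\bigr) \;\leq\; m - \EE[f],
\]
and therefore $\lambda_{t+2} \|f^{>t}\|^2 \leq m - \EE[f]$ (note that $\EE[f] \leq m$ follows from Wilson's bound $|\cF| \leq \binom{n-t}{k-t}$, so the right-hand side is nonnegative).

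To finish, I will invoke Lemma~\ref{lem:tEKR-ev} with $\rho = r/(k-t+1)$ (matching the parametrization $n = (t+1+\rho)(k-t+1)$) to obtain $\lambda_{t+2} = \Omega_t(\min(\rho, 1))$. Since $k-t+1 \leq k$, we have $\min(\rho,1) \geq \min(r/k,1)$, so $1/\lambda_{t+2} = O_t(\max(k/r,1))$, which delivers the stated bound. The argument is essentially forced once Wilson's matrix is in hand; there is no substantial obstacle beyond the routine bookkeeping between $\rho$, $r$, $k-t+1$, and $k$, so the only point requiring care is cleanly passing from $r/(k-t+1)$ to $r/k$ at the very end rather than confusing the two quantities.
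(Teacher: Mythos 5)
Your proposal is correct and follows essentially the same route as the paper: bounding $\langle f,Af\rangle=\EE[f]$ via Wilson's matrix, dropping to the $\lambda_{t+2}$ eigenvalue on the part of degree above $t$, using $\EE[f]\leq m$ to get $\lambda_{t+2}\|f^{>t}\|^2\leq m-\EE[f]$, and then invoking Lemma~\ref{lem:tEKR-ev} with $\rho=r/(k-t+1)\geq r/k$. The only cosmetic difference is that you cite Wilson's bound $|\cF|\leq\binom{n-t}{k-t}$ for $\EE[f]\leq m$, whereas the paper extracts it directly from the same quadratic-form inequality.
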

\begin{proof}
Let $A$ be the matrix from Theorem~\ref{thm:wilson}. Since $\|f^{=0}\| = \EE[f]$,
\[
 \EE[f] = \langle f,Af \rangle \geq \lambda_0 \EE[f]^2 + \lambda_{t+2} \|f^{>t}\|^2.
\]
This already implies that $\EE[f] \leq \lambda_0^{-1} = m$.
Since $\lambda_0 = m^{-1}$ and $\EE[f] \leq m$, we conclude that
\[
 \|f^{>t}\|^2 \leq \frac{\EE[f]-m^{-1}\EE[f]^2}{\lambda_{t+2}} = \frac{\EE[f](1-m^{-1}\EE[f])}{\lambda_{t+2}} \leq \frac{m-\EE[f]}{\lambda_{t+2}}.
\]
Lemma~\ref{lem:tEKR-ev} completes the proof.
\end{proof}

In order to prove our stability result, we need a result on cross-intersecting families.

\begin{theorem}[{\cite{FranklTokushige}}] \label{thm:cross-intersecting}
Let $\cF \subseteq \binom{[n]}{a}$ and $\cG \subseteq \binom{[n]}{b}$ be cross-intersecting families: every set in $\cF$ intersects every set in $\cG$.
If $n \geq a+b$ and $b \geq a$ then
\[
|\cF| + |\cG| \leq \binom{n}{b} - \binom{n-a}{b} + 1 \leq \binom{n}{b}.
\]
\end{theorem}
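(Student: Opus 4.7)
The plan is to use the classical shifting (compression) technique, which is the standard approach to Erd\H{o}s--Ko--Rado type inequalities for cross-intersecting families. For each pair $i < j$ in $[n]$, define the shift operator $S_{ij}$ that acts on a family by replacing any set containing $j$ but not $i$ with the set obtained by substituting $i$ for $j$, whenever that resulting set is not already in the family. Applied simultaneously to both $\cF$ and $\cG$, the shift preserves $|\cF|$ and $|\cG|$, and a standard case analysis shows it preserves the cross-intersecting property (the delicate case is when $F \cap G = \{j\}$, which forces the shifts to put $i$ into the intersection). Iterating exhaustively, we reduce to the case where both families are left-compressed.

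With both families compressed and non-empty (if $\cF = \emptyset$, the weaker bound $|\cG| \leq \binom{n}{b}$ suffices, since $n \geq a+b$ gives $\binom{n-a}{b} \geq 1$), compression forces $\{1,\ldots,a\} \in \cF$ and $\{1,\ldots,b\} \in \cG$. In particular every $G \in \cG$ meets $\{1,\ldots,a\}$, yielding the preliminary bound $|\cG| \leq \binom{n}{b} - \binom{n-a}{b}$. To upgrade this to $|\cF| + |\cG| \leq \binom{n}{b} - \binom{n-a}{b} + 1$, I would induct on $n$. Split $\cF = \cF_n \sqcup \cF_{\bar n}$ according to whether $n$ lies in the set, and similarly for $\cG$. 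Writing $\cF_n^- = \{F \setminus \{n\} : F \in \cF_n\}$ and $\cG_n^- = \{G \setminus \{n\} : G \in \cG_n\}$, the pairs $(\cF_{\bar n}, \cG_{\bar n})$, $(\cF_{\bar n}, \cG_n^-)$, and $(\cF_n^-, \cG_{\bar n})$ are all cross-intersecting on ground set $[n-1]$, while compression yields nesting relations such as $\cF_n^- \subseteq \cF_{\bar n}$ and $\cG_n^- \subseteq \cG_{\bar n}$. Applying the inductive bound to the appropriate pair (with parameters shifted in $a$ or $b$) and summing gives the target inequality, with the extra $+1$ reflecting the extremal configuration $\cF = \{\{1,\ldots,a\}\}$, $\cG = \{G \in \binom{[n]}{b} : G \cap \{1,\ldots,a\} \neq \emptyset\}$.

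The main obstacle is the inductive bookkeeping: one must coordinate the four pieces $\cF_n, \cF_{\bar n}, \cG_n, \cG_{\bar n}$ so that the compression-based nesting aligns with the cross-intersecting pairs fed into the induction hypothesis, and verify that the dimensions $(n-1, a, b)$, $(n-1, a-1, b)$, $(n-1, a, b-1)$ all still satisfy the hypotheses $n' \geq a' + b'$ and $b' \geq a'$ (or symmetric substitutes). Base cases $a = 1$ and $n = a+b$ should be handled directly: for $a = 1$ the family $\cF$ consists of singletons, $\cG$ must meet every such singleton, and a direct count matches the claimed bound; for $n = a+b$ the trivial shadow $\binom{n-a}{b} = 1$ makes the two asserted inequalities coincide. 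The second inequality $\binom{n}{b} - \binom{n-a}{b} + 1 \leq \binom{n}{b}$ is immediate once $n \geq a+b$ ensures $\binom{n-a}{b} \geq 1$.
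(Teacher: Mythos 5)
The paper does not prove this statement; it is cited verbatim from Frankl and Tokushige, so there is no in-paper proof to compare against. Your compression-plus-induction strategy is a plausible route (and compression is the standard first step), but as written it has both a substantive gap and a couple of defects that would need to be fixed before it could stand.

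The substantive gap is the empty-family case. The Frankl--Tokushige theorem as usually stated requires both $\cF$ and $\cG$ to be non-empty; the paper's version omits this, and you noticed something is off, but your fix does not work. If $\cF=\emptyset$ and $n>a+b$, then $\cG=\binom{[n]}{b}$ is cross-intersecting with $\cF$, and $|\cF|+|\cG|=\binom{n}{b}>\binom{n}{b}-\binom{n-a}{b}+1$, so the \emph{first} inequality in the chain genuinely fails. Your remark that ``the weaker bound $|\cG|\le\binom{n}{b}$ suffices'' establishes the second inequality of the chain, not the first; to prove the statement as written you would have to add the hypothesis $\cF,\cG\neq\emptyset$ (as in the original reference). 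Since the paper only uses the weak conclusion $|\cF|+|\cG|\le\binom{n-J}{k-|B|}$, this omission is harmless there, but it is a real error in a standalone proof of the stated theorem.

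The inductive step also has problems. The asserted nesting $\cF_n^-\subseteq\cF_{\bar n}$ is a type error ($\cF_n^-$ lives in $\binom{[n-1]}{a-1}$, $\cF_{\bar n}$ in $\binom{[n-1]}{a}$); what left-compression actually gives you is a shadow-type relation, not containment, and the pair $(\cF_n^-,\cG_n^-)$ is \emph{not} cross-intersecting, so the four pieces cannot simply be regrouped into two inductive instances. Moreover, when $a=b$ the pair $(\cF_{\bar n},\cG_n^-)$ has parameters $(a,b-1)$ with $b-1<a$, so the roles of the two families must be swapped and the bound comes back phrased in $\binom{n-1}{a}-\binom{n-b+1}{a}+1$ rather than the shape you need --- reconciling these is exactly the bookkeeping you flag as ``the main obstacle,'' and the proposal does not resolve it. The published argument sidesteps all of this: after shifting, one passes to the complement family $\cH=\{[n]\setminus F: F\in\cF\}\subseteq\binom{[n]}{n-a}$, observes that cross-intersection is equivalent to $\cG$ avoiding the $b$-shadow of $\cH$, and then uses Kruskal--Katona (or its Lov\'asz form) to show $|\partial^{(b)}\cH|\ge\binom{n-a}{b}+|\cH|-1$ whenever $\cH\neq\emptyset$. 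That route avoids the four-way split entirely, so I would recommend it over trying to patch the induction on $n$.
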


We can now prove our stability result.

\begin{proof}[Proof of Theorem~\ref{thm:tEKR-stability}]
In what follows, all big~$O$ notations depend on $t$ and $\lambda$. We can assume that $n$ is large enough (as a function of $t$ and $\lambda$), since otherwise the theorem is trivial. We use the parameter $p = (k-t+1)/n$ which satisfies $\lambda/2 < p < 1/(t+1)$.

Let $f$ be the characteristic function of $\cF$, so that $\EE[f] = m - \epsilon$, where $m = \binom{n-t}{k-t}/\binom{n}{k}$. Lemma~\ref{lem:stability-fourier} shows that $\|f^{>t}\|^2 = O(\max(k/r,1)) \epsilon$, and so Theorem~\ref{thm:kindler-safra} shows that $\|f-g\|^2 \leq \delta$ for the characteristic function $g$ of some family $\cG$ depending on $J = J_t$ coordinates, for some constant $J_t$, where $\delta = O(\max((k/r)^{1/C},1)\epsilon^{1/C} + 1/n^{1/C})$; here we use the fact that $\lambda \leq k/n \leq 1/2$. We want to show that if $\delta$ is small enough (as a function of $t$) then $\cG$ must be a $t$-star or a $(t,1)$-Frankl family; if $\delta$ is large then the theorem becomes trivial.

We start by showing that if $\delta$ is small enough then $\cG$ must be $t$-intersecting. Suppose without loss of generality that $\cG$ depends only on the first $J$ coordinates. We will show that $\cJ = \cG|_{[J]} \subseteq \{0,1\}^J$ must be $t$-intersecting. If $\cJ$ is not $t$-intersecting, then pick $A,B \in \cJ$ which are not $t$-intersecting, with $|A| \geq |B|$. Let $\cA = \{ S \in \binom{[n] \setminus [J]}{k-|A|} : A \cup S \in \cF \}$ and $\cB = \{ S \in \binom{[n] \setminus [J]}{k-|B|} : B \cup S \in \cF \}$. Since $n \geq (t+1)k - (t^2-1)$ and $k \geq \lambda n$, if $n$ is large enough then $(k-|A|) + (k-|B|) \leq n - 2J$, and so Theorem~\ref{thm:cross-intersecting} shows that $|\cA| + |\cB| \leq \binom{n-J}{k-|B|}$. Therefore
\[
 \|f-g\|^2 = \frac{|\cF \triangle \cG|}{\binom{n}{k}} \geq \frac{\binom{n-J}{k-|A|}}{\binom{n}{k}} = p^{|A|} (1-p)^{J-|A|} \left(1 \pm O\left(\frac{1}{p(1-p)n}\right)\right) = \Omega(1),
\]
using Lemma~\ref{lem:measures-close} (for large enough $n$) and the fact that $p > \lambda/2$. We conclude that if $\delta$ is small enough, $\cJ = \cG|_{[J]}$ must be $t$-intersecting. 

Next, we show that if $\delta$ is small enough then $\cG$ must be either a $t$-star or a $(t,1)$-Frankl family. If $\cG$ is neither then $\mu_p(\cJ) < p^t$ for all $0 < p \leq 1/(t+1)$ by Theorem~\ref{thm:tEKR-mup}, and in particular, since $p > \lambda/2$, $\mu_p(\cJ) \leq p^t - \gamma$ for some $\gamma > 0$; here we use the fact that there are finitely many $t$-intersecting families on $J$ points. Since $\nu_k(\cJ) = \mu_p(\cJ) (1 \pm O(1/n))$ due to Lemma~\ref{lem:measures-close}, for large enough $n$ and small enough $\epsilon$ we have
\[
 \|f-g\|^2 \geq (\EE[f]-\EE[g])^2 \geq (\gamma(1\pm O(1/n)) - \epsilon)^2 = \Omega(1).
\]
We deduce that if $n$ is large enough and $\epsilon$ is small enough then $\cG$ is either a $t$-star or a $(t,1)$-Frankl family.

It remains to show that if $\epsilon \leq A_{t,\lambda} \min(r/k,1)^{C+1}$ then $\cG$ cannot be a $(t,1)$-Frankl family.
Define $\tau = \min(r/k,1)$.
Let $m_1$ be the measure of a $(t,1)$-Frankl family. Lemma~\ref{lem:measure-deficit} shows that $m - m_1 = \Omega(\tau)$ (since $p > \lambda/2$ implies $m = \Omega(1)$). Therefore if $\cG$ is a $(t,1)$-Frankl family then $\EE[g] \leq m - \Omega(\tau)$. On the other hand, $\EE[g] \geq \EE[f] - \delta = m - \epsilon - O((\epsilon/\tau)^{1/C} + 1/n^{1/C})$. Put together, we obtain
\[ \Omega(\tau) \leq \epsilon + O((\epsilon/\tau)^{1/C} + 1/n^{1/C}). \]
Choose a constant $c$ so that $\epsilon \leq c\tau$ implies
\[ \Omega(\tau) \leq O((\epsilon/\tau)^{1/C} + 1/n^{1/C}); \]
if $\epsilon > c\tau$ then the theorem becomes trivial. The inequality implies that $\tau^C = O(\epsilon/\tau)$ and so $\tau^{C+1} = O(\epsilon)$, contradicting our assumption on $\epsilon$ for an appropriate choice of $A_{t,\lambda}$.
\end{proof}

Our conjecture on the optimal error bound in Theorem~\ref{thm:kindler-safra} implies an error bound of $O_{t,\lambda}(\max(k/r,1)\epsilon)$ in Theorem~\ref{thm:tEKR-stability}.

\section{Open problems} \label{sec:open-problems}

Our work gives rise to several open questions.

\begin{enumerate}
 \item Prove (or refute) an invariance principle comparing $\nu_{pn}$ and $\gamma_{p,p}$ for arbitrary (non-harmonic) multilinear polynomials.
 \item Prove a tight version of the Kindler--Safra theorem on the slice (Theorem~\ref{thm:kindler-safra}).
 \item The uniform distribution on the slice is an example of a negatively associated vector of random variables. Generalize the invariance principle to this setting.
 \item The slice $\binom{[n]}{k}$ can be thought of as a $2$-coloring of $[n]$ with a given histogram. Generalize the invariance principle to $c$-colorings with given histogram.
 \item The slice $\binom{[n]}{k}$ has a $q$-analog: all $k$-dimensional subspaces of $\FF_q^n$ for some prime power $q$. The analog of the Boolean cube consists of all subspaces of $\FF_q^n$ weighted according to their dimension. Generalize the invariance principle to the $q$-analog, and determine the analog of Gaussian space.
\end{enumerate}

\subsection*{Acknowledgements}
This paper started its life when all authors were members of a semester-long program on ``Real Analysis in Computer Science" at the Simons Institute for Theory of Computing at U.C.\ Berkeley. The authors would like to thank the institute for enabling this work.

Y.F. would like to mention that this material is based upon work supported by the National Science Foundation under agreement No.~DMS-1128155. Any opinions, findings and conclusions or recommendations expressed in this material are those of the authors, and do not necessarily reflect the views of the National Science Foundation. The bulk of the work on this paper was done while at the Institute for Advanced Study, Princeton, NJ.

E.M. would like to acknowledge the support of the following grants: NSF grants DMS 1106999 and CCF 1320105, DOD ONR grant N00014-14-1-0823, and grant 328025 from the Simons Foundation.

K.W. would like to acknowledge the support of NSF grant CCF 1117079.

\bibliographystyle{plain}
\bibliography{main}

\end{document}